\newtheorem{lem}{Lemma}[section]
\newtheorem{thm}[lem]{Theorem}
\newtheorem{pro}[lem]{Proposition}
\newtheorem{cor}[lem]{Corollary}
\newtheorem{defn}[lem]{Definition}
\newcommand{\bqn}{\begin{equation}}
\newcommand{\eqn}{\end{equation}}
\newcommand{\beqx}{\begin{equation*}}
\newcommand{\eeqx}{\end{equation*}}
\newcommand{\barr}{\begin{array}}
\newcommand{\earr}{\end{array}}
\newcommand{\beqn}{\begin{eqnarray}}
\newcommand{\eeqn}{\end{eqnarray}}
\newcommand{\beqnx}{\begin{eqnarray*}}
\newcommand{\eeqnx}{\end{eqnarray*}}
\newcommand{\bmt}{\begin{multline}}
\newcommand{\emt}{\end{multline}}
\numberwithin{equation}{section}
\newcommand{\D}{\partial}
\newcommand{\supp}{\operatorname{supp}}
\newcommand{\ve}{\varepsilon}
\newcommand{\vphi}{{\varphi}}
\newcommand{\ro}{\rho}
\newcommand{\er}{\eqref}
\newcommand{\lb}{\label}
\newcommand{\qu}{\quad}
\newcommand{{\R}}{\mbox{$\mathbb R$}}
\begin{document}


\title{The Kinetic and Hydrodynamic Bohm Criterions \\
for Plasma Sheath Formation}

\author{Masahiro Suzuki${}^1$ and Masahiro Takayama${}^2$}

\date{%
\normalsize
${}^1$%
Department of Computer Science and Engineering, 
Nagoya Institute of Technology,
\\
Gokiso-cho, Showa-ku, Nagoya, 466-8555, Japan
\\ [7pt]
${}^2$%
Department of Mathematics, 
Keio University, 
\\
Hiyoshi, Kohoku-ku, Yokohama, 223-8522, Japan
\\[10pt]
\large \today
}

\maketitle

\begin{abstract}
The purpose of this paper is to mathematically investigate the formation of 
a plasma sheath, and to analyze the Bohm criterions which are required for the formation.
Bohm derived originally the (hydrodynamic) Bohm criterion from the Euler--Poisson system.
Boyd and Thompson proposed the (kinetic) Bohm criterion from a kinetic point of view,
and then Riemann derived it from the Vlasov--Poisson system.
In this paper, we prove the solvability of boundary value problems of the Vlasov--Poisson system.
On the process, we see that  the kinetic Bohm criterion is a necessary condition for the solvability.
The argument gives a simpler derivation of the criterion. 
Furthermore, the hydrodynamic criterion can be derived from the kinetic criterion.
It is of great interest to find the relation between 
the solutions of the Vlasov--Poisson and Euler--Poisson systems.
To clarify the relation, we also study the delta mass limit of solutions of the Vlasov--Poisson system.

\end{abstract}

\begin{description}
\item[{\it Keywords:}]
Vlasov--Poisson system,
Euler--Poisson system,
Boundary value problem,
Delta mass limit.

\item[{\it 2020 Mathematics Subject Classification:}]
35A01, 
35M32, 
35Q35, 
76X05 

\end{description}


\newpage

\section{Introduction}\lb{S1}

The purpose of this paper is to mathematically investigate the formation of 
a plasma boundary layer, called as a \emph{sheath}, 
near the surface of materials immersed in a plasma, 
and to analyze the Bohm criterions which are required for the formation.
The sheath appears when a material is surrounded
by a plasma and the plasma contacts with its surface.
Because the thermal velocities of electrons are much higher than those of ions, 
more electrons tend to hit the material compared with ions.
This makes the material negatively charged with respect to the surrounding plasma. 
Then the material with a negative potential attracts and accelerates ions toward the surface, 
while repelling electrons away from it. 
Eventually, there appears a non-neutral potential region near the surface, 
where a nontrivial equilibrium of the densities is achieved. 
This non-neutral region is referred as to the sheath. 
For more details of  physicality of the sheath development, 
we refer the reader to \cite{D.B.1, F.C.1,I.L.1, LL, K.R.1, K.R.2}.

For the formation of sheath, Langmuir \cite{I.L.1} observed that 
positive ions must enter the sheath region with a sufficiently large flow velocity. 
Using the Euler--Poisson system (see \eqref{sp0} below),
Bohm \cite{D.B.1} proposed the original \emph{Bohm criterion}
which states that the flow velocity of positive ions at the plasma edge must exceed the ion acoustic speed.
In this paper, we call the criterion \emph{a hydrodynamic Bohm criterion}. 
Nowadays there are many mathematical results
which investigated the sheath formation by using the Euler--Poisson system.
The studies \cite{A.A.,AMR,NOS,M.S.1,M.S.2,MM1}  established
the existence and stability of stationary solutions assuming the hydrodynamic Bohm criterion.
These results validated mathematically the criterion.
From different perspectives to the those results',
the sheath formation was discussed by considering the quasi-neutral limit 
as letting the Debye length in the Euler--Poisson system tend to zero \cite{GHR1,GHR2,JKS1,JKS2,JKS3}.
Furthermore, Feldman--Ha--Slemrod \cite{FHS1} studied the sheath formation adopting 
a certain hydrodynamic model which describes the dynamics of an interface between the plasma and sheath
(see also \cite{RD1}).

From a kinetic point of view, Boyd--Thompson \cite{BT1} proposed a Bohm criterion.
After that Riemann \cite{K.R.1} derived it from the Vlasov--Poisson system (see \eqref{sVP1} below).
We call it \emph{a kinetic Bohm criterion} in this paper. 
There is no mathematical result which investigates the sheath formations 
by using the Vlasov--Poisson system with rigorous proofs.
One of our goals is to prove the solvability of boundary value problems of 
the Vlasov--Poisson system under the kinetic Bohm criterion.
On the process, we see that the criterion is a necessary condition for the solvability.
The argument gives a simpler derivation of the criterion. 

It is worth pointing out that the hydrodynamic Bohm criterion can be derived from the kinetic Bohm criterion.
We are also interested in finding the relation between the solutions of
the Vlasov--Poisson and Euler--Poisson systems.
To clarify the relation, we study some limit of solutions of the Vlasov--Poisson system.

After a suitable nondimensionalization, 
the stationary Vlasov--Poisson system is written by
\begin{subequations}\label{sVP1}
\begin{gather}
\xi_{1} \partial_{x} f+\partial_{x} \phi \partial_{\xi_{1}} f =0, \quad x>0 , \ \xi \in \mathbb R^{3},
\label{seq1}
\\
\partial_{xx} \phi 
= \int_{\mathbb R^{3}} f d\xi - n_{e}(\phi), \quad x>0,
\label{seq2}
\end{gather}
where $x>0$ and $\xi = (\xi_{1},\xi_{2},\xi_{3})=(\xi_{1},\xi') \in \mathbb R^{3}$ 
are the space variable and velocity, respectively.
The unknown functions $f = f(x,\xi)$ and $-\phi=-\phi(x)$ stand 
for the velocity distribution of positive ions
and the electrostatic potential, respectively. 
The given function $n_{e}(\cdot) \in C^{2}(\mathbb R)$ denotes the number density of electrons.
We assume that 
\begin{gather*}
n_{e}(0)=1, \quad n_{e}'(0)=-1.
\end{gather*}
One of typical functions is the Boltzmann relation $n_{e}(\phi)=e^{-{\phi}}$.

We study the boundary value problem of \eqref{seq1}--\eqref{seq2} with the boundary conditions
\begin{gather}
f (0,\xi) = f_{b}(\xi)+\alpha  f(0,-\xi_{1},\xi'), \quad \xi_{1}>0,
\label{sbc1} \\
\phi(0)=\phi_{b},
\label{sbc3} \\
\lim_{x \to\infty} f (x,\xi) =  f_{\infty}(\xi), 
\quad \xi \in \mathbb R^{3},
\label{sbc2} \\
\lim_{x \to\infty} \phi (x) =  0.
\label{sbc4} 
\end{gather}
\end{subequations}
The constants $\alpha \in [0,1]$ and $-\phi_{b} \in \mathbb R$
denote the rate of refraction and the voltage on the boundary, respectively.
Furthermore, $f_{b}=f_{b}(\xi)$ and $f_{\infty}=f_{\infty}(\xi)$ are given nonnegative functions.
Physically speaking, the case $f_{b}=0$ and $\alpha=0$ corresponds to {\it a completely absorbing wall}.
In addition, $\phi_{b}>0$ and $\phi_{b}<0$ mean
that the wall is negatively and positively charged, respectively.
Let us say {\it attractive} and {\it repulsive} boundaries for positive ions 
if $\phi_{b}>0$ and $\phi_{b}<0$, respectively.

Riemann \cite{K.R.1} studied essentially the same boundary value problem as \eqref{sVP1} to derive the kinetic Bohm criterion
\begin{gather}\label{Bohm1}
\int_{\mathbb R^{3}}\xi_{1}^{-2}f_{\infty}(\xi) d\xi  \leq 1.
\end{gather}
He also assumed that the number density of electrons is given 
by a function of the electrostatic potential as in \eqref{seq2}.
On the other hand, he did not impose any boundary condition for the potential $\phi$ at $x=0$,
and supposed implicitly a situation that $\phi$ is monotone. 
The condition \eqref{sbc3} is one of simplest boundary conditions that can create the situation.
In stead of \eqref{sbc3}, we can also impose the following boundary condition:
\begin{gather}\label{sbc5}
n_{e}(\phi(0)) v_{e}=(1-\alpha)\int_{\mathbb R^{3}} \xi_{1} f(0,\xi) d\xi,
\end{gather}
where $v_{e}$ is a constant. 
This condition means physically that the outgoing fluxes of electrons and ions coincide at the boundary.
Even if we consider a boundary value problem of 
\eqref{seq1}--\eqref{seq2} with conditions \eqref{sbc1},  \eqref{sbc2}, \eqref{sbc4}, and \eqref{sbc5}, 
it is seen that a value $\phi(0)$ is determined a priori, 
and thus we can reduce the problem to the boundary value problem \eqref{sVP1} (for more details, see Appendix \ref{A0}).
Therefore, we focus ourself on the study of the problem \eqref{sVP1} in this paper.

The derivation of \eqref{Bohm1} by Riemann \cite{K.R.1} is clear to understand, but it seems to be simplified.
Indeed he divided the two cases $\xi_{1}>0$ and $\xi_{1}<0$, 
and then changed the coordinates according to $\xi_{1} \gtrless 0$.
Some expansion of the unknown function $\phi$ was also used.
One of our purposes is to find a simpler derivation 
avoiding the use of such coordinate transformations and expansions.


The Vlasov--Poisson system in \eqref{sVP1}
is given by a system of partial and ordinary differential equations, 
while the stationary Euler--Poisson system is given 
by a system of just ordinary differential equations.
Besides $\eqref{seq2}$ has a non-local term.
The idea to resolve these difficulties is to reduce the problem \eqref{sVP1} to 
a boundary value problem of a first-order ordinary differential equation only for $\phi$
by combining the characteristics method and the technique used in \cite{M.S.1}. 
The reduction also enables us to derive more simply the kinetic Bohm criterion \eqref{Bohm1},
which is a necessary condition of the solvability of \eqref{sVP1}.
We will construct the solution under \eqref{Bohm1}
(see Theorems \ref{existence1}, \ref{existence2}, and \ref{existence3} below).

It is worth pointing out that letting $f_{\infty}(\xi)$ be a delta function $\delta(\xi-(-u_{\infty},0,0))$ 
in the kinetic Bohm criterion \eqref{Bohm1}, one can obtain
the hydrodynamic Bohm criterion:
\begin{gather}\label{Bohm4}
u_{\infty}^{2} \geq 1,
\end{gather}
where $u_{\infty}$ is a positive constant and
$-u_{\infty}$ means the flow velocity of positive ions at infinite distance.
Bohm derived originally the criterion \eqref{Bohm4} by using 
the stationary Euler--Poisson system of cold plasma:
\begin{subequations}\lb{sp0}
\begin{gather}
   (\rho u)'=0, \quad 
   {u}u'={\phi}', \quad 
   \phi''=\rho-e^{-{\phi}}, \quad x>0,
  \lb{sp3}
   \end{gather}
where $\rho=\rho(x)$, $u=u(x)$, and $-\phi=-\phi(x)$
represent the number density and flow velocity of positive ions and
the electrostatic potential, respectively.
Suzuki \cite{M.S.1} showed the unique existence of solutions of 
the system \eqref{sp3} with the conditions
\begin{gather}
  \inf_{x\in\mathbb R_{+}}\rho(x)>0, \qu
  \phi(0)=\phi_b, \qu
  \lim_{x\rightarrow \infty}(\rho,u,\phi)(x)=(1,-u_{\infty},0), 
  \lb{sp4}
\end{gather}
\end{subequations}
where $u_{\infty}>0$ and $\phi_{b} \in \mathbb R$ are constants.
For more details, see Proposition \ref{EPsol} below.

It is of great interest to investigate the relation between 
the solutions of the Vlasov--Poisson system \eqref{sVP1} 
and the Euler--Poisson system \eqref{sp0}.
To clarify the relation,
we choose some approximate functions of the delta function $\delta(\xi-(-u_{\infty},0,0))$ for $f_{\infty}$,
and then show in Theorem \ref{DiffThm1} 
that the solution of \eqref{sVP1} converges to that of \eqref{sp0} 
by taking the limit of approximate functions to the delta function.
We call the limit {\it a delta mass limit}.

We review mathematical results on the Vlasov--Poisson system describing the motion of plasma.
For the Cauchy problem, early references \cite{BD,LP,K.P.,J.S.}
investigated the time-global solvability and dispersive analysis (see also textbooks \cite{glassey,G.R.}).  
Guo--Strauss \cite{GS1} established spatially periodic stationary solutions and also investigated its instability 
(see also \cite{GS2} studying the relativistic Vlasov--Maxwell system).
For the the initial--boundary value problem,
Guo \cite{Y.G.1} and Hwang--Vel\'azquez\cite{HV1} showed the time-global solvability 
adopting the specular reflection boundary condition.
Han-Kwan--Rousset \cite{HR} and Han-Kwan--Iacobelli \cite{HI1,HI2} 
analyzed the quasi-neutral limit of solutions satisfying a periodic boundary condition.
Furthermore, Skubachevskii \cite{A.S.1,A.S.2} and Skubachevskii--Tsuzuki \cite{ST1} 
focused on the analysis of toroidal magnetic plasma containment devices (tokamak), 
and established the existence of solutions of the initial--boundary value problem with an external magnetic field,
where supports $\supp f$ do not contact with boundaries.

The stationary problem has also been extensively studied.
In an infinite cylinder and a half-space,
Belyaeva \cite{Y.B.} and Skubachevskii \cite{A.S.3} constructed stationary solutions whose supports $\supp f$ do not contact with boundaries by applying an external magnetic field.
Similarly, Knopf \cite{P.K.1} established the stationary solution in the whole space,
where the support $\supp f$ is contained in an infinite cylinder.
Let us introduce results which treated more similar settings to this paper.
Greengard--Raviart \cite{GR1} and Rein \cite{G.R.2} constructed the stationary solutions 
in bounded domains adopting the Dirichlet and the specular refection boundary conditions, respectively. 
Recently, Esent\"urk--Hwang--Strauss \cite{EHS} showed the solvability of the stationary problem
with diffusive boundary conditions for various domains including a half-space.
For the half-space case, they considered a situation 
that there are only particles whose energies $|\xi|^{2}/2$ are bounded by some finite number,
and the boundary is not electrically charged.
This point is one of the differences between settings in \cite{EHS} and this paper.
We also emphasize that there is no research
which studies the delta mass limit mentioned above.

This paper is organized as follows.
Section \ref{S2} provides our main theorems on the solvability and delta mass limit.
In Section \ref{S3}, we treat the completely absorbing and attractive boundary, i.e. 
$f_{b}=\alpha=0$ and $\phi_{b}>0$.
Subsection \ref{S3.1} is devoted to discussion on a simpler derivation of the kinetic Bohm criterion \eqref{Bohm1}.
Subsection \ref{S3.2} establishes the solvability of the problem \eqref{sVP1}. 
We also justify the delta mass limit in subsection \ref{S3.3}.
In Section \ref{S4}, we study general boundaries, 
i.e. $(f_{b},\alpha)\neq (0,0)$ and $\phi_{b} \neq 0$.
 

Before closing this section, we give our notation used throughout this paper.

\medskip

\noindent
{\bf Notation.} 
For $ 1 \leq p \leq \infty$, $L^p(\Omega)$ is the Lebesgue space
equipped with the norm $\Vert\cdot\Vert_{L^{p}(\Omega)}$.
Let us denote by $(\cdot,\cdot)_{L^{2}(\Omega)}$ the inner product of $L^{2}(\Omega)$.
For $1<r<\infty$ and $-\infty \leq a<b \leq \infty$, the function spaces $L^{r}(a,b; L^{1}(\mathbb R^{2}))$  and $L^{r}_{loc}(\mathbb R; L^{1}(\mathbb R^{2}))$ are  defined by
\begin{align*}
L^{r}(a,b; L^{1}(\mathbb R^{2}))&:=\left\{ f \in L^{1}((a,b) \times \mathbb R^{2}) \left| \|f\|_{L^{r}(a,b;L^{1}(\mathbb R^{2}))}<\infty \right. \right\},
\\
L^{r}_{loc}(\mathbb R; L^{1}(\mathbb R^{2}))
&
:=\left\{ f \in L^{1}_{loc} (\mathbb R^{3}) \left|  \|f\|_{L^{r}(a,b; L^{1}(\mathbb R^{2}))}<\infty \ \text{for $-\infty < \forall a< \forall b < \infty$} \right.\right\},
\\
\|f\|_{L^{r}(a,b;L^{1}(\mathbb R^{2}))}&:= \left\{\int_{a}^{b} \left(\int_{\mathbb R^{2}} |f(\xi_{1},\xi')| d\xi' \right)^{r} d\xi_{1} \right\}^{1/r}.
\end{align*}
Furthermore, $\mathbb R_{+}:=\{x>0\}$ stands for a one-dimensional half space;
$\mathbb R_{+}^{3}:=\{\xi \in \mathbb R^{3} ; \xi_{1}>0 \}$ stands for a three-dimensional upper half space;
$\mathbb R_{-}^{3}:=\{\xi \in \mathbb R^{3} ; \xi_{1}<0 \}$ stands for a three-dimensional lower half space.
We also use the one-dimensional indicator function $\chi(s)$ of the set $\{s>0\}$.

\section{Main Results}\lb{S2}

We focus ourself on the analysis of solutions whose potential $\phi$ is monotone,
since the potential is observed as a monotone function when the plasma sheath is formed.
Let us give a definition of solutions of the boundary value problem \eqref{sVP1}.

\begin{defn}\label{DefS1}
We say that $(f,\phi)$ is a solution of 
the boundary value problem \eqref{sVP1} if it satisfies the following:
\begin{enumerate}[(i)]
\item $f \in L^{1}_{loc}(\overline{\mathbb R_{+}}\times \mathbb R^{3}) \cap C(\overline{\mathbb R_{+}};L^{1}(\mathbb R^{3}))$
and $\phi \in C^{1}(\overline{\mathbb R_{+}}) \cap C^{2}(\mathbb R_{+})$. 
\item $f(x)\geq 0$, and either $\D_{x}\phi(x)>0$ or $\D_{x}\phi(x)<0$.
\item $f$ solves 
\begin{subequations}\label{weak0}
\begin{gather}
({f},\xi_1\D_x\psi
+\D_x{\phi}\D_{\xi_1}\psi)_{L^2({\mathbb R}_+\times {\mathbb R}^3)}
+(f_b,\xi_1\psi(0,\cdot))_{L^2( {\mathbb R}_+^3)}
=0 \quad \hbox{for $\forall \psi \in {\cal X}$},  
\label{weak1}\\
\lim_{x\to \infty} \| f(x,\cdot)-f_{\infty} \|_{L^{1}(\mathbb R^{3})}=0,
\label{weak2}
\end{gather}
\end{subequations}
where 
${\cal X}:=\{f\in C_0^1(\overline{\mathbb{R}_+}\times \mathbb{R}^3)\;|\;
\hbox{$\alpha f(0,\xi_{1},\xi')=f(0,-\xi_1,\xi')$ for $(\xi_{1},\xi')\in {\mathbb R}_+^3$}\}$.
\item $\phi$ solves \eqref{seq2} with \eqref{sbc3} and \eqref{sbc4} in the classical sense.
\end{enumerate}
\end{defn}

The equation \eqref{weak1} is a standard weak form of the equation \eqref{seq1} and boundary condition \eqref{sbc1}.
We also remark that it is possible to replace {\it the classical sense} in the condition (iv) by {\it the weak sense}.
Indeed a weak solution $\phi$ of the problem of \eqref{seq2} with \eqref{sbc3} and \eqref{sbc4}
is a classical solution if $f \in C(\overline{\mathbb R_{+}};L^{1}(\mathbb R^{3}))$.

Next we discuss some necessary conditions for the solvability of the problem \eqref{sVP1}, 
which are used to state our main results.
To solve the Poisson equation \eqref{seq2} with \eqref{sbc4},
we must require the quasi-neutral condition
\begin{gather}\label{netrual1}
\int_{\mathbb R^{3}} f_{\infty}(\xi) d\xi = 1.
\end{gather}  
We remark that the boundary value problem \eqref{sVP1} is overdetermined. 
Let us explain briefly for the case $\phi_{b}=0$.
First $\phi$ must be zero,
and then the equation \eqref{seq1} implies that $f$ is independent of $x$.
This fact together with the boundary condition \eqref{sbc2} means that $f=f_{\infty}$.
On the other hand, due to the boundary condition \eqref{sbc1}, 
we have a necessary condition for $f_{\infty}$ and $f_{b}$:
\begin{gather*}
f_{\infty} (\xi) = f_{b}(\xi)+\alpha  f_{\infty}(-\xi_{1},\xi'), \quad \xi=(\xi_{1},\xi') \in {\mathbb R}^{3}_{+}.
\end{gather*}
Consequently, we cannot choose independently $f_{\infty}$ and $f_{b}$.
For the case $\phi_{b} \neq 0$, the \footnote{We may not find any physical meaning of \eqref{need3}--\eqref{need2},
but mathematically speaking there is no solution without them.}following are necessary conditions:
\begin{align}
&f_\infty(\xi)=f_b(\sqrt{\xi_1^2+2\phi_b},\xi')+\alpha f_\infty(-\xi_1,\xi'),  
\quad  \xi\in\R_+^3
& \text{if $\phi_{b}>0$},
\label{need3} \\
&f_\infty(\xi)=f_b(\sqrt{\xi_1^2+2\phi_b},\xi')+\alpha f_\infty(-\xi_1,\xi'), 
\quad  \xi\in(\sqrt{2|\phi_b|},\infty)\times\R^2,
\notag\\
&
f_\infty(\xi_1,\xi')=f_\infty(-\xi_1,\xi'), \quad 
\xi\in(-\sqrt{2|\phi_b|},\sqrt{2|\phi_b|})\times\R^2
& \text{if $\phi_{b}<0$}.
\label{need4}
\end{align}
In particular, for the completely absorbing and attractive boundary, i.e. $f_{b}=\alpha=0$ and $\phi_{b}>0$, it is written by
\begin{gather}
f_{\infty}(\xi)=0, \quad \xi_{1} > 0.
\label{need2} 
\end{gather}
We will show that \eqref{need3}, \eqref{need4}, and \eqref{need2} are necessary conditions in Lemmas \ref{lem41}, \ref{lem42}, and \ref{lem31}, respectively.
From the above observation, we also see that $(f,\phi)=(f_{\infty},0)$ 
is a unique solution of \eqref{sVP1} with $\phi_{b}=0$,
and hence suppose $\phi_{b}\neq 0$ hereafter.

We state our main results for the completely absorbing and attractive boundary, i.e. 
$f_{b}=\alpha=0$ and $\phi_{b}>0$ in subsection \ref{S2.1}. 
It is one of the most discussed situation in plasma physics.
Subsection \ref{S2.2} provides the main results for general boundaries, 
i.e. $(f_{b},\alpha)\neq (0,0)$ and $\phi_{b} \neq 0$.

\subsection{The Completely Absorbing and Attractive Boundary}\label{S2.1}

We first discuss the results of the solvability of the problem \eqref{sVP1} with 
the completely absorbing and attractive boundary.
After that we also study the delta mass limit of the solution.
It validates rigorously the relation of the kinetic and hydrodynamic Bohm criterions.

The solvability is summarized in Theorem \ref{existence1} below. 
Here the set $B$ is defined for $f_{\infty}$ as
\begin{align*}
B:=\{\varphi>0 \, ; \, V(\phi)>0 \ \text{for} \ \phi \in (0,\varphi] \},
\end{align*}
where
\begin{align}\label{V}
V(\phi):=\int_{0}^{\phi} \rho_{i}(\varphi) - n_{e}(\varphi)  \, d\vphi, \quad
\rho_{i}(\phi):=\int_{\mathbb R^{3}} f_{\infty}(\xi)\frac{-\xi_{1}}{\sqrt{\xi_{1}^{2}+2\phi}} d\xi.
\end{align}
The function $\rho_{i}$ is well-defined for $f_{\infty} \in L^{1}(\mathbb R)$. Indeed,
\begin{gather}\label{rho0}
|\rho_{i}(\phi)| \leq \|f_{\infty}\|_{ L^{1}(\mathbb R^{3})} \quad
\text{for $\phi\geq 0$.}
\end{gather}

\begin{thm}\label{existence1}
Let $f_{b}=\alpha=0$ and $\phi_{b}>0$.
Suppose that $f_{\infty} \in L^{1}(\mathbb R^{3})$ satisfies $f_{\infty} \geq 0$ and
the necessary conditions \eqref{netrual1} and \eqref{need2}.
\begin{enumerate}[(i)]
\item Assume that 
\begin{gather}
\int_{\mathbb R^{3}}\xi_{1}^{-2}f_{\infty}(\xi) d\xi <1.
\label{Bohm2}
\end{gather}
Then the set $B$ is not empty. 
Furthermore, if and only if $\phi_{b}<\sup B$ holds, the problem \eqref{sVP1} has a unique solution $(f,\phi)$.
There also hold that 
\begin{gather}
f(x,\xi)=f_{\infty}(-\sqrt{\xi_{1}^{2}-2\phi(x)},\xi')\chi(\xi_{1}^{2}-2\phi(x))\chi(-\xi_{1}),
\label{fform} \\
|\D_{x}^{l} \phi(x)| \leq C e^{- c x} 
\quad \text{for $l=0,1,2$},
\label{decay1}
\end{gather}
where $\chi(s)$ is the one-dimensional indicator function of the set $\{s>0\}$, and 
$c$ and $C$ are positive constants independent of $x$. 
In addtion, if $f_{\infty} \in C^{2}(\mathbb R^{3})$, 
then the solution $(f,\phi) \in C^{1}(\overline{\mathbb R_{+}}\times \mathbb R^{3}) \times C^{2}(\overline{\mathbb R_{+}})$ is a classical solution.

\item Assume that 
\begin{gather*}
\int_{\mathbb R^{3}}\xi_{1}^{-2}f_{\infty}(\xi) d\xi =1, \quad B\neq \emptyset.
\end{gather*}
If and only if $\phi_{b}<\sup B$ holds, the problem \eqref{sVP1} has a unique solution $(f,\phi)$.
Further, \eqref{fform} holds. If $f_{\infty} \in C^{2}(\mathbb R^{3})$, 
then the solution $(f,\phi) \in C^{1}(\overline{\mathbb R_{+}}\times \mathbb R^{3}) \times C^{2}(\overline{\mathbb R_{+}})$ is a classical solution.

\item Assume that 
\begin{gather*}
\int_{\mathbb R^{3}}\xi_{1}^{-2}f_{\infty}(\xi) d\xi =1, \quad B= \emptyset.
\end{gather*}
Then the problem \eqref{sVP1} admits no solution.

\item Assume \footnote{This includes the case $\int_{\mathbb R^{3}}\xi_{1}^{-2}f_{\infty}(\xi) d\xi=\infty$.}that 
\begin{gather}
\int_{\mathbb R^{3}}\xi_{1}^{-2}f_{\infty}(\xi) d\xi >1.
\label{notBohm0}
\end{gather}
Then the problem \eqref{sVP1} admits no solution.

\end{enumerate}
\end{thm}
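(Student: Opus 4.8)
The plan is to reduce the full boundary value problem \eqref{sVP1} to a first‑order ODE for $\phi$ alone, exactly as the paper announces, and then to read off from that reduced problem why condition \eqref{notBohm0} is incompatible with solvability. First I would argue that any solution must have $\D_x\phi>0$ on $\mathbb R_+$ (rather than $<0$): under the completely absorbing and attractive boundary ($f_b=\alpha=0$, $\phi_b>0$) the decay condition \eqref{sbc4} together with $\phi_b>0$ forces the potential to be positive near $x=0$ and to tend to $0$; monotonicity (part (ii) of Definition \ref{DefS1}) then pins down the sign $\D_x\phi>0$, so $\phi$ decreases from $\phi_b$ to $0$. Next, solving \eqref{seq1} by characteristics with the incoming data $f(0,\xi)=0$ for $\xi_1>0$ and the far‑field data \eqref{sbc2}, and using conservation of the particle energy $\tfrac12\xi_1^2-\phi$ along characteristics, I would derive the representation \eqref{fform} for $f$ in terms of $\phi$; integrating in $\xi$ gives $\int_{\mathbb R^3} f\,d\xi=\rho_i(\phi(x))$ with $\rho_i$ as in \eqref{V}. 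Substituting into the Poisson equation \eqref{seq2}, multiplying by $\D_x\phi$ and integrating from $x$ to $\infty$ (using $\D_x\phi,\phi\to 0$) yields the first‑order relation
\begin{gather*}
\tfrac12\,(\D_x\phi(x))^2 = V(\phi(x)), \qquad x\ge 0,
\end{gather*}
so that a solution exists only if $V(\phi)\ge 0$ on the whole range $[0,\phi_b]$ traversed by $\phi$; in particular $B\neq\emptyset$ and $\phi_b\le \sup B$ is forced. This is the mechanism already used in \cite{M.S.1} for the Euler–Poisson case, and it underlies parts (i)–(iii).

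For part (iv) itself the key is the behaviour of $V$ near $\phi=0$. From \eqref{V} and the normalizations $n_e(0)=1$, $n_e'(0)=-1$ one has $V(0)=0$ and $V'(0)=\rho_i(0)-n_e(0)$; since $\rho_i(0)=\int_{\mathbb R^3} f_\infty\,d\xi=1$ by the quasi‑neutral condition \eqref{netrual1}, we get $V'(0)=0$ as well, so the sign of $V$ near $0$ is governed by $V''(0)$. Differentiating $\rho_i$ under the integral sign,
\begin{gather*}
\rho_i'(\phi)=\int_{\mathbb R^3} f_\infty(\xi)\,\frac{\xi_1}{(\xi_1^2+2\phi)^{3/2}}\,d\xi,
\qquad
\rho_i'(0)=\int_{\mathbb R^3}\xi_1^{-2} f_\infty(\xi)\,d\xi,
\end{gather*}
where the last integral is taken over the support of $f_\infty$, which by \eqref{need2} lies in $\{\xi_1<0\}$ (so $\xi_1^{-2}=|\xi_1|^{-2}$ there and the sign works out). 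Hence $V''(0)=\rho_i'(0)-n_e'(0)=\int_{\mathbb R^3}\xi_1^{-2}f_\infty\,d\xi-1$. Under \eqref{notBohm0} this is strictly positive (or $+\infty$, which I would handle by a truncation/monotone‑convergence argument showing $\rho_i'(0^+)>1$ still forces the same conclusion), so $V''(0)>0$, meaning $V(\phi)>0$ for small $\phi>0$. At first glance this looks like it \emph{permits} a solution; the point is the opposite — one must combine it with the global energy identity. Indeed, since $\phi$ solves $\tfrac12(\D_x\phi)^2=V(\phi)$ with $\D_x\phi>0$, differentiating gives $\D_{xx}\phi=V'(\phi)$, i.e. $\phi$ obeys a Newtonian equation with potential $-V$; a trajectory that leaves $\phi_b>0$ and reaches $\phi=0$ as $x\to\infty$ must be a heteroclinic to the rest point $\phi=0$, which requires $\phi=0$ to be a local \emph{maximum} of $-V$, i.e. $V$ to have a local minimum at $0$ — but $V''(0)>0$ makes $0$ a strict local minimum of $V$, so $-V$ has a strict local maximum there only if the curvature has the right sign; the correct statement is that approach to $0$ as $x\to\infty$ with $\D_x\phi\to0$ forces $V(\phi)>0$ just above $0$ \emph{and} $V'(\phi)\le 0$ there, and the relation $(\D_x\phi)^2=2V(\phi)$ near $x=\infty$ forces $\D_x\phi\sim\sqrt{2V(\phi)}$; linearizing, $V(\phi)\approx\tfrac12 V''(0)\phi^2$ gives $\D_x\phi\approx\sqrt{V''(0)}\,\phi$, whose solutions \emph{grow} like $e^{\sqrt{V''(0)}\,x}$ rather than decay, contradicting $\phi\to0$. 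In other words, exponential decay to the rest state as in \eqref{decay1} is possible only when $V''(0)<0$, i.e. when the Bohm inequality holds with the reversed strict sign; when $\int\xi_1^{-2}f_\infty\,d\xi>1$ no decaying trajectory exists. I would write this as: assume for contradiction a solution exists, derive $\tfrac12(\D_x\phi)^2=V(\phi)$ and $\D_x\phi>0$, then on the interval where $\phi$ is small use $V(\phi)\ge \tfrac12(V''(0)-\varepsilon)\phi^2>0$ to get $\D_x\phi\ge c\,\phi$, integrate to conclude $\phi$ cannot tend to $0$ — contradiction.

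The main obstacle, as usual with this kind of reduction, is making the passage from \eqref{sVP1} to the scalar ODE fully rigorous at the level of regularity in Definition \ref{DefS1}: justifying the characteristic representation \eqref{fform} for merely $L^1_{loc}\cap C(\overline{\mathbb R_+};L^1)$ solutions (not a priori classical), differentiating $\rho_i$ under the integral when $\int\xi_1^{-2}f_\infty$ may be infinite, and controlling the $\phi\to0$ asymptotics without begging the question by assuming \eqref{decay1}. I expect the cleanest route is to first establish the energy identity $\tfrac12(\D_x\phi)^2=V(\phi)$ directly from the weak formulation \eqref{weak1}–\eqref{weak2} and \eqref{seq2} (testing against $\D_x\phi$, which is legitimate since $\phi\in C^1$), and only then run the elementary ODE contradiction above; the integrability subtleties in $\rho_i$ are then confined to checking $V''(0)>0$, which survives a monotone truncation of $f_\infty$. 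Everything else is the same bookkeeping that proves parts (i)–(iii).
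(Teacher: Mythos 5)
Your overall strategy --- reduce \eqref{sVP1} by characteristics to \eqref{fform}, integrate in $\xi$ to get $\rho_i(\phi)$, and derive the energy identity $(\D_x\phi)^2=2V(\phi)$ --- is exactly the paper's route. But there is a sign error that breaks your treatment of part (iv) and would, if followed through, also break the construction in parts (i)--(ii). First, since $\phi(0)=\phi_b>0$, $\phi\to 0$ at infinity, and $\phi$ is strictly monotone, necessarily $\D_x\phi<0$; your claim that $\D_x\phi>0$ is inconsistent with your own next clause ``so $\phi$ decreases from $\phi_b$ to $0$''. Second, and fatally, $\rho_i'(0)$ has the wrong sign: the integrand of $\rho_i$ is $f_\infty(\xi)\,(-\xi_1)(\xi_1^2+2\phi)^{-1/2}$, whose $\phi$-derivative at $\phi=0$ is $f_\infty(\xi)\,\xi_1/|\xi_1|^3=-\xi_1^{-2}f_\infty(\xi)$ on the support $\{\xi_1<0\}$ guaranteed by \eqref{need2}. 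Hence $\rho_i'(0)=-\int_{\mathbb R^3}\xi_1^{-2}f_\infty\,d\xi$ and $V''(0)=1-\int_{\mathbb R^3}\xi_1^{-2}f_\infty\,d\xi$ (this is \eqref{V2} in the paper), not $\int\xi_1^{-2}f_\infty\,d\xi-1$ as you wrote.

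With the correct sign the proof of (iv) is immediate and needs none of your dynamical-systems discussion: under \eqref{notBohm0} one has $V''(0)<0$, so $V(\phi)<0$ for small $\phi>0$, which contradicts the fact that $(\D_x\phi)^2=2V(\phi)\geq 0$ on the full range $(0,\phi_b]$ swept by $\phi$ (the case $\int\xi_1^{-2}f_\infty=\infty$ is handled identically by showing $d^2V/d\phi^2<-C_0$ on some $(0,c_0]$). Your growth argument, by contrast, proves too much: with the correct signs, $\D_x\phi=-\sqrt{2V(\phi)}$ and $V''(0)>0$ linearize to $\D_x\phi\approx-\sqrt{V''(0)}\,\phi$, i.e.\ exponential \emph{decay} --- which is precisely how the paper constructs the solution and obtains \eqref{decay1} in part (i) under \eqref{Bohm2}. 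Your assertion that decay to the rest state ``is possible only when $V''(0)<0$'' would render part (i) of the theorem false, since $V''(0)<0$ makes $V$ negative near $0$ and the identity $(\D_x\phi)^2=2V(\phi)$ unsolvable. Once the sign of $\rho_i'(0)$ is corrected, your framework for (i)--(iii) (Lipschitz continuity of $\sqrt{V}$ on $[0,\phi_b]$ when $\phi_b<\sup B$, unique solvability of $\D_x\phi=-\sqrt{2V(\phi)}$, and the approximation of $f_\infty$ by smooth compactly supported functions to verify Definition \ref{DefS1}) does line up with the paper's proof.
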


This theorem covers all possible cases of $\phi_{b}>0$ and $f_{\infty} \in L^{1}(\mathbb R^{3})$ with $f_{\infty} \geq 0$, 
and clarifies completely when there is a solution or not.
Assertion (iv) means that the kinetic Bohm criterion \eqref{Bohm1} is a necessary condition for 
the solvability of the problem \eqref{sVP1}, since \eqref{notBohm0} is the negation of \eqref{Bohm1}.
Assertion (iv) will be shown in subsection \ref{S3.1}.
The proof gives simultaneously a simpler derivation of the kinetic Bohm criterion than that of \cite{K.R.1}.
Subsection \ref{S3.1} also provides the proof of Assertion (iii).
In subsection \ref{S3.2}, we will prove Assertions (i) and (ii).

Next we discuss the relation between 
the solutions of the Vlasov--Poisson system \eqref{sVP1} 
and the Euler--Poisson system \eqref{sp0}.
As mentioned in Section \ref{S1}, the hydrodynamic Bohm criterion \eqref{Bohm4} 
can be derived from the kinetic Bohm criterion \eqref{Bohm1} 
by plugging a delta function $\delta(\xi-(-u_{\infty},0,0))$ into $f_{\infty}$.
To investigate the relation of solutions, 
we use 
\begin{equation}\label{fve}
f_{\infty,\ve}(\xi):=\frac{1}{\ve^{3}}\vphi \left(\frac{\xi-(-u_{\infty},0,0)}\ve\right),
\end{equation}
where $u_{\infty}$ is a positive constant and 
\begin{gather}\label{vphi0}
0<\ve<1, \quad \vphi \in C_{0}^{\infty}(\mathbb R^{3}), \quad \vphi(\xi) \geq 0, \quad 
{\rm supp} \vphi \subset \{|\xi| < 1 \}, \quad \int_{{\mathbb R}^{3}} \vphi(\xi) d\xi=1. 
\end{gather}
Note that 
\begin{gather*}
\lim_{\ve\to 0}\int_{{\mathbb R}^{3}}  f_{\infty,\ve}(\xi) \psi(\xi) d\xi = \psi(-u_{\infty},0,0), \quad
\forall \psi \in C(\mathbb R^{3}).
\end{gather*}
This means that $f_{\infty,\ve}(\xi)$ converges to the delta function $\delta(\xi-(-u_{\infty},0,0))$
as $\ve \to 0$.
Let us denote by $(f_{\ve},\phi_{\ve})$ 
the solution of the problem \eqref{sVP1} with $f_{\infty}=f_{\infty,\ve}$,
where $f_{\infty,\ve} \in C_{0}^{\infty}(\mathbb R^{3})$ satisfies
the conditions \eqref{netrual1}, \eqref{need2}, and \eqref{Bohm2} 
being in Theorem \ref{existence1} for $u_{\infty}>1$ and $\ve \ll 1$.
We also introduce the moments 
\begin{gather*}
\rho_{\ve}(x):=\int_{{\mathbb R}^{3}} f_{\ve}(x,\xi) d\xi , \quad u_{\ve}(x):= \frac{1}{\rho_{\ve}(x)}\int_{{\mathbb R}^{3}} \xi_{1} f_{\ve} (x,\xi) d\xi.
\end{gather*}
It is expected that the moments converge to $(\rho,u)$ of the Euler--Poisson system \eqref{sp0}
by taking the delta mass limit as $\ve \to 0$.
The solvability of the problem \eqref{sp0} is summarized in the following proposition.

\begin{pro}[\!\!\cite{M.S.1}]\label{EPsol}
Let \eqref{Bohm4} and $\phi_{b} \in (-u_{\infty}^{2}/2,\infty)$ hold. 
Then the problem \eqref{sp0} has a unique monotone solution 
$(\rho,u,\phi) \in C^{\infty}(\overline{{\mathbb R}_{+}})$.
Furthermore,  there hold that
\begin{gather}
\rho=\frac{u_{\infty}}{\sqrt{u_{\infty}^{2}+2\phi}}, \quad 
u=-\sqrt{u_{\infty}^{2}+2\phi}, \quad 
\D_{x}\phi(x) \neq 0,
\label{EPform1} \\
|\D_{x}^{l}(\rho-1,u+u_{\infty},\phi)(x)| \leq Ce^{-cx}, \quad l=0,1,2,
\notag 
\end{gather}
where $c$ and $C$ are positive constants independent of $x$. 
\end{pro}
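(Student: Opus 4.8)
The plan is to collapse the system \eqref{sp0} to a single autonomous first--order ODE for $\phi$, in the same spirit as the reduction used for \eqref{sVP1}. From $(\rho u)'=0$ and the far--field condition in \eqref{sp4} one gets $\rho u\equiv -u_\infty$; integrating $uu'=\phi'$ and using $u\to-u_\infty$, $\phi\to0$ at infinity gives $u^{2}=u_\infty^{2}+2\phi$, and since $\rho>0$ forces $u<0$ we must take $u=-\sqrt{u_\infty^{2}+2\phi}$, which is admissible only as long as $u_\infty^{2}+2\phi>0$. This is exactly where the hypothesis $\phi_{b}\in(-u_\infty^{2}/2,\infty)$ is used, and part of the work is to check a posteriori that $\phi(x)$ never leaves $(-u_\infty^{2}/2,\infty)$. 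Substituting $\rho=-u_\infty/u=u_\infty(u_\infty^{2}+2\phi)^{-1/2}$ into $\phi''=\rho-e^{-\phi}$ turns the Poisson equation into the scalar equation $\phi''=g(\phi)$ with $g(\phi):=u_\infty(u_\infty^{2}+2\phi)^{-1/2}-e^{-\phi}$, together with $\phi(0)=\phi_{b}$ and $\phi,\phi'\to0$ at infinity; note $g=\rho_{i}-n_{e}$ for the choice $f_\infty=\delta(\xi-(-u_\infty,0,0))$ in \eqref{V}.

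Multiplying $\phi''=g(\phi)$ by $\phi'$ and integrating from $x$ to $\infty$ yields the energy identity $(\phi')^{2}=2V(\phi)$, where $V(\phi)=\int_{0}^{\phi}g=u_\infty\sqrt{u_\infty^{2}+2\phi}+e^{-\phi}-u_\infty^{2}-1$ is precisely the potential \eqref{V} evaluated on this delta mass; thus $\phi'=-\sgn(\phi_{b})\sqrt{2V(\phi)}$, and the whole problem reduces to (a) showing $V(\phi)>0$ for $\phi$ strictly between $0$ and $\phi_{b}$, and (b) integrating this separable equation. Step (a) is the heart of the matter and the step I expect to be the main obstacle. One computes $V(0)=V'(0)=g(0)=0$ and $V''(0)=g'(0)=1-u_\infty^{-2}$, so $\phi=0$ is a degenerate equilibrium and the hydrodynamic Bohm criterion \eqref{Bohm4} is exactly the condition $V''(0)\ge 0$ making it a local minimum of $V$. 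For the global positivity I would argue on the closed form of $V$ directly: comparing $\rho_{i}(\phi)$ with $e^{-\phi}$ via $h(\phi):=\log\rho_{i}(\phi)+\phi=\log u_\infty-\tfrac12\log(u_\infty^{2}+2\phi)+\phi$, for which $h(0)=0$ and $h'(\phi)=1-(u_\infty^{2}+2\phi)^{-1}>0$ when $\phi>0$ (using $u_\infty^{2}\ge1$), one gets $g(\phi)>0$ and hence $V(\phi)>0$ for all $\phi>0$; the range $\phi\in(\phi_{b},0)$ when $\phi_{b}<0$ is handled by the same convexity/monotonicity bookkeeping on the relevant subinterval together with $V''(0)\ge0$.

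Granting (a), I would finish as follows. Separating variables in $dx/d\phi=-\sgn(\phi_{b})(2V(\phi))^{-1/2}$ defines $x$ as a strictly monotone smooth function of $\phi$ on the open interval between $0$ and $\phi_{b}$; since $V(\phi)\sim\tfrac12 V''(0)\phi^{2}$ near $0$, the integral $\int(2V(\phi))^{-1/2}\,d\phi$ diverges as $\phi\to0$, so the inverse $\phi(x)$ is defined for all $x\in[0,\infty)$, is strictly monotone, satisfies $\phi(0)=\phi_{b}$ and $\phi(x)\to0$, and stays in a compact subset of $(-u_\infty^{2}/2,\infty)$ (whence also $\inf_{x}\rho(x)>0$). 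Uniqueness is immediate since any solution must satisfy $(\phi')^{2}=2V(\phi)$ with the fixed sign of $\phi'$. Bootstrapping $\phi''=g(\phi)$ gives $\phi\in C^{\infty}(\overline{\mathbb R_+})$, and then $\rho=u_\infty(u_\infty^{2}+2\phi)^{-1/2}$ and $u=-\sqrt{u_\infty^{2}+2\phi}$ are smooth, monotone, satisfy \eqref{EPform1}, and $\partial_{x}\phi=-\sgn(\phi_{b})\sqrt{2V(\phi)}\neq0$. Finally, when the Bohm criterion holds strictly, $V''(0)=1-u_\infty^{-2}>0$, so linearizing the scalar ODE at $\phi=0$ gives $|\phi(x)|\le Ce^{-cx}$ with $c=\sqrt{1-u_\infty^{-2}}$; differentiating $\phi''=g(\phi)$ propagates the decay to $\partial_{x}\phi$ and $\partial_{x}^{2}\phi$, and the formulas for $\rho,u$ then give the stated exponential decay of $(\rho-1,u+u_\infty,\phi)$ up to two derivatives (the degenerate case $u_\infty^{2}=1$, where $V$ vanishes to third order at $0$, would be treated separately).
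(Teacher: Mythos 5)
The paper itself does not prove Proposition \ref{EPsol}: it is imported from \cite{M.S.1}, so there is no in-house argument to compare against. Your reduction --- $\rho u\equiv-u_\infty$, $u^2=u_\infty^2+2\phi$, the scalar equation $\phi''=g(\phi)$ and the energy identity $(\partial_x\phi)^2=2V(\phi)$ with $V$ equal to the potential \eqref{V} evaluated on the delta mass --- is exactly the mechanism this paper uses for the kinetic problem in Sections \ref{S3}--\ref{S4} (and the mechanism of \cite{M.S.1}). Your treatment of the attractive side $\phi_b>0$ is complete and correct: the auxiliary function $h(\phi)=\log\rho_i(\phi)+\phi$ does give $g>0$, hence $V>0$, on all of $(0,\infty)$, and the Lipschitz continuity of $\sqrt{V}$ at the degenerate zero together with the divergence of $\int(2V)^{-1/2}\,d\phi$ as $\phi\to0$ correctly yields global existence, uniqueness and monotonicity there.

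The gap is the repulsive side $\phi_b<0$, and it is not ``bookkeeping.'' The same necessary condition $(\partial_x\phi(0))^2=2V(\phi_b)\ge0$ applies, but $V(\phi)=u_\infty\sqrt{u_\infty^2+2\phi}+e^{-\phi}-u_\infty^2-1$ is \emph{not} positive on all of $(-u_\infty^2/2,0)$: at the endpoint, $V(-u_\infty^2/2)=e^{u_\infty^2/2}-u_\infty^2-1$, which is negative whenever $1\le u_\infty^2\lesssim 2.51$. For instance, with $u_\infty^2=2$ and $\phi_b=-0.99$ one gets $V(\phi_b)=2\sqrt{0.01}+e^{0.99}-3\approx-0.11<0$, so no monotone solution exists even though \eqref{Bohm4} and $\phi_b\in(-u_\infty^2/2,\infty)$ hold. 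Your own sign analysis already shows why: $g$ changes sign at some $\phi_*\in(-u_\infty^2/2,0)$ and $V$ decreases as $\phi$ decreases below $\phi_*$, so positivity of $V$ down to $\phi_b$ is a genuine additional hypothesis --- precisely the condition $\phi_b>\inf B^-$ of Theorem \ref{existence3} specialized to the delta mass --- and the ``same convexity/monotonicity bookkeeping'' cannot produce it on the whole interval $(-u_\infty^2/2,0)$. You must either restrict $\phi_b$ accordingly (which is all this paper ever needs: Theorem \ref{DiffThm1} takes $0<\phi_b<\delta_0$) or accept that the range of $\phi_b$ in the statement as transcribed is too generous. A second, smaller issue: in the marginal case $u_\infty^2=1$ the potential vanishes to third order at $0$ and the decay of $\phi$ is only algebraic (of order $x^{-2}$), so the exponential bound you defer to a ``separate treatment'' is not recoverable there; it requires the strict inequality $u_\infty^2>1$.
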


We are now in a position to state our results on the delta mass limit. 
\begin{thm}\label{DiffThm1}
Let $f_{b}=\alpha=0$, $n_{e}(\phi)=e^{-\phi}$, and $u_{\infty}>1$.
There exist positive constants $\delta_{0}$ and $C_{0}$ such that if $0<\phi_{b}<\delta_{0}$, then the following holds:
\begin{gather}\label{conver0}
\sup_{x \in {\mathbb R}_{+}} |\rho_{\ve}(x) - \rho(x)| 
+\sup_{x \in {\mathbb R}_{+}} |\rho_{\ve}u_{\ve}(x) - \rho u(x)|
+\sup_{x \in {\mathbb R}_{+}} |\phi_{\ve}(x) - \phi(x)| \leq C_{0}\ve
\end{gather}
for any $\ve \in (0,\ve_{0})$, where $\ve_{0} := (u_{\infty}-1)/2$.
\end{thm}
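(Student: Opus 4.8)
The plan is to exploit the explicit formulas available on both sides. By Theorem~\ref{existence1}(i), for $u_\infty>1$ and $\ve$ small the solution of \eqref{sVP1} with $f_\infty=f_{\infty,\ve}$ satisfies \eqref{fform}, so $\rho_\ve(x)=\int_{\mathbb R^3}f_{\infty,\ve}(-\sqrt{\xi_1^2-2\phi_\ve(x)},\xi')\chi(\xi_1^2-2\phi_\ve(x))\chi(-\xi_1)\,d\xi$. Changing variables $\eta_1=-\sqrt{\xi_1^2-2\phi_\ve(x)}$ (valid on the monotone branch, with $\phi_\ve>0$) turns this into $\rho_\ve(x)=\int_{\mathbb R^3}f_{\infty,\ve}(\eta)\,\frac{-\eta_1}{\sqrt{\eta_1^2+2\phi_\ve(x)}}\,d\eta=\rho_{i,\ve}(\phi_\ve(x))$, i.e. exactly the function $\rho_i$ from \eqref{V} built from $f_{\infty,\ve}$. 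Similarly $\rho_\ve u_\ve(x)=\int_{\mathbb R^3}\eta_1 f_{\infty,\ve}(\eta)\,d\eta=-u_\infty$ (conservation of the $\xi_1$-flux, which for the absorbing wall is constant in $x$ and equals its value at infinity). On the Euler--Poisson side, Proposition~\ref{EPsol} gives $\rho(x)=u_\infty/\sqrt{u_\infty^2+2\phi(x)}$ and $\rho u(x)=-u_\infty$. Thus the flux term in \eqref{conver0} vanishes identically, and the whole problem reduces to controlling $|\phi_\ve-\phi|$ and then $|\rho_{i,\ve}(\phi_\ve(x))-u_\infty/\sqrt{u_\infty^2+2\phi(x)}|$.

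Next I would reduce each of $\phi_\ve$ and $\phi$ to a first-order ODE, as the introduction advertises. From \eqref{seq2}, \eqref{fform} and the monotonicity, multiplying $\phi_\ve''=\rho_{i,\ve}(\phi_\ve)-n_e(\phi_\ve)$ by $\phi_\ve'$ and integrating from $x$ to $\infty$ (using \eqref{decay1}) gives the energy identity $\tfrac12(\phi_\ve')^2=V_\ve(\phi_\ve)$, where $V_\ve(\phi)=\int_0^\phi\rho_{i,\ve}(\varphi)-n_e(\varphi)\,d\varphi$; since $\phi_\ve$ is decreasing we get $\phi_\ve'=-\sqrt{2V_\ve(\phi_\ve)}$, $\phi_\ve(0)=\phi_b$. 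Likewise, \eqref{EPform1} yields $\tfrac12(\phi')^2=\tilde V(\phi)$ with $\tilde V(\phi)=\int_0^\phi \frac{u_\infty}{\sqrt{u_\infty^2+2\varphi}}-e^{-\varphi}\,d\varphi=\sqrt{u_\infty^2+2\phi}-u_\infty-1+e^{-\phi}$, and $\phi'=-\sqrt{2\tilde V(\phi)}$, $\phi(0)=\phi_b$. The key analytic input is then the pointwise estimate $|\rho_{i,\ve}(\varphi)-\tfrac{u_\infty}{\sqrt{u_\infty^2+2\varphi}}|\le C\ve$ uniformly for $\varphi\in[0,\phi_b]$: indeed $\rho_{i,\ve}(\varphi)=\int\varphi_{\text{mol}}(\zeta)\,\frac{u_\infty-\ve\zeta_1}{\sqrt{(u_\infty-\ve\zeta_1)^2+2\varphi}}\,d\zeta$ after the substitution $\eta=(-u_\infty,0,0)+\ve\zeta$, and since $\supp\varphi\subset\{|\zeta|<1\}$ and $u_\infty-\ve>u_\infty-\ve_0=(u_\infty+1)/2>1$, the integrand is smooth in $\ve$ with $\ve$-derivative bounded uniformly on $\varphi\in[0,\phi_b]$; a first-order Taylor expansion in $\ve$ gives the claim. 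Integrating, $|V_\ve(\phi)-\tilde V(\phi)|\le C\ve\phi\le C\ve\phi_b$ for $\phi\in[0,\phi_b]$.

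With these in hand, $|\phi_\ve-\phi|$ is controlled by a Gr\"onwall-type argument on the two autonomous ODEs with the same initial datum $\phi_b$. The subtlety is that $\sqrt{2V(\cdot)}$ is not Lipschitz where $V$ vanishes — precisely at the stationary point $\phi=0$, which is the limit as $x\to\infty$. This is exactly the situation already handled for \eqref{sp0} in \cite{M.S.1} (and implicit in the exponential-decay bounds of Theorem~\ref{existence1} and Proposition~\ref{EPsol}): near $\phi=0$ one has $V_\ve(\phi),\tilde V(\phi)\sim \tfrac{c}{2}\phi^2$ with $c=1-\int\xi_1^{-2}f_{\infty,\ve}\,d\xi+O(1)>0$ for $\phi_b$ small (here $n_e'(0)=-1$ and $n_e''$ enter), so $\phi\mapsto\sqrt{2V(\phi)}/\phi$ extends to a positive Lipschitz function near $0$; writing the ODE for $\log\phi_\ve$ versus $\log\phi$ removes the degeneracy, and the difference of the ``logarithmic speeds'' is $O(\ve/\phi_b)\cdot\phi$-type, giving $\sup_x|\phi_\ve(x)-\phi(x)|\le C\ve$ once $\phi_b<\delta_0$ is chosen so that both potentials stay in the region where $V_\ve,\tilde V>0$ and comparable to $\phi^2$. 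I expect this degenerate-ODE comparison near the rest state to be the main obstacle; the choice of $\delta_0$ must simultaneously ensure $B\ne\emptyset$ with $\phi_b<\sup B_\ve$ for all small $\ve$ (so that $(f_\ve,\phi_\ve)$ exists by Theorem~\ref{existence1}(i)) and that the quadratic lower bound on $V_\ve$ holds uniformly in $\ve$, which follows from the $\ve$-uniform estimate on $\rho_{i,\ve}$ above. Finally, $|\rho_\ve(x)-\rho(x)|=|\rho_{i,\ve}(\phi_\ve(x))-\tfrac{u_\infty}{\sqrt{u_\infty^2+2\phi(x)}}|\le|\rho_{i,\ve}(\phi_\ve)-\tfrac{u_\infty}{\sqrt{u_\infty^2+2\phi_\ve}}|+|\tfrac{u_\infty}{\sqrt{u_\infty^2+2\phi_\ve}}-\tfrac{u_\infty}{\sqrt{u_\infty^2+2\phi}}|\le C\ve+C|\phi_\ve-\phi|\le C_0\ve$, using that $\varphi\mapsto u_\infty/\sqrt{u_\infty^2+2\varphi}$ is Lipschitz on $[0,\phi_b]$, and the flux term is exactly zero as noted. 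Collecting the three bounds gives \eqref{conver0}.
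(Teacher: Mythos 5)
Your reduction of the problem to the two scalar functions $\phi_{\ve}$ and $\phi$, the identity $\rho_{\ve}(x)=\rho_{i,\ve}(\phi_{\ve}(x))$, the uniform bound $|\rho_{i,\ve}(\varphi)-u_{\infty}/\sqrt{u_{\infty}^{2}+2\varphi}|\le C\ve$ on $[0,\phi_{b}]$, and the final triangle inequality for $\rho_{\ve}-\rho$ all match what the paper does. One small factual slip: the flux term does \emph{not} vanish identically. Since no symmetry is imposed on the mollifier $\vphi$ in \eqref{vphi0}, one only has $\rho_{\ve}u_{\ve}=\int_{\mathbb R^{3}}\xi_{1}f_{\infty,\ve}(\xi)\,d\xi=-u_{\infty}+O(\ve)$ (the support lies within $\ve$ of $-u_{\infty}$ in the $\xi_{1}$ direction); the paper bounds $|\int f_{\infty,\ve}(\xi)(\xi_{1}+u_{\infty})\,d\xi|\le\ve$ rather than claiming exact cancellation. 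This is harmless for \eqref{conver0}.

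The genuine gap is in the step that controls $\sup_{x}|\phi_{\ve}-\phi|$. You compare the two first-order equations $\phi_{\ve}'=-\sqrt{2V_{\ve}(\phi_{\ve})}$ and $\phi'=-\sqrt{2\tilde V(\phi)}$ by ``a Gr\"onwall-type argument,'' but a naive Gr\"onwall estimate for $e:=\phi_{\ve}-\phi$ of the form $|e'|\le L|e|+C\ve$ yields $|e(x)|\le C\ve(e^{Lx}-1)/L$, which is useless as $x\to\infty$; and the logarithmic substitution you propose does not by itself repair this, since the equation for $\log\phi_{\ve}-\log\phi$ is again only controlled by an unsigned Lipschitz term plus an $O(\ve)$ forcing, so Gr\"onwall still produces exponential growth, and converting back via $|\phi_{\ve}-\phi|=\phi\,|e^{w}-1|$ requires a delicate and unproven comparison between the Gr\"onwall constant and the decay rate of $\phi$. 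What actually closes the argument within your framework is the \emph{damping} structure, which you never invoke: for $\phi_{b}\le\delta_{0}$ small the map $\phi\mapsto\sqrt{2V_{\ve}(\phi)}$ is increasing on $(0,\phi_{b}]$ with derivative $V_{\ve}'/\sqrt{2V_{\ve}}$ bounded below by a positive constant uniformly in $\ve$ (this is where $u_{\infty}>1$ and $n_{e}'(0)=-1$ enter), so that $e'=-a(x)e+F(x)$ with $a\ge c_{0}>0$ and $|F|\le C\ve$, and Duhamel gives $|e(x)|\le C\ve/c_{0}$ uniformly. The paper avoids this issue entirely by staying with the second-order equation: it writes $\partial_{xx}(\phi_{\ve}-\phi)=W(\phi_{\ve})-W(\phi)+R_{\ve}$ with $W(\phi)=u_{\infty}/\sqrt{u_{\infty}^{2}+2\phi}-e^{-\phi}$, uses $dW/d\phi\ge c_{0}>0$ on $[0,\delta_{0}]$, and tests against $(\phi_{\ve}-\phi-M_{\ve}/c_{0})_{+}$ to obtain the maximum-principle bound \eqref{DiffEq3}; monotonicity of $W$ plays exactly the role of the missing damping. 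So your route is a legitimately different (first-order) one and is salvageable, but as written the key estimate is not established.
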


Now we mention some remarks.
We cannot choose directly a Maxwellian for $f_{\infty}$ due to 
the necessary condition \eqref{need2} for the solvability of the problem \eqref{sVP1}, 
but it allows products of the Maxwellian and cut-off functions.
For such products, the delta mass limit as $\ve\to0$ corresponds to that 
the temperature of the Maxwellian tends to zero.
In this sense, it is reasonable that the limit is a solution of 
the Euler--Poisson system of cold plasma.

\subsection{General Boundaries}\lb{S2.2}
This section provides the main results for general boundary conditions, i.e.
$(f_{b},\alpha)\neq (0,0)$ and $\phi_{b} \neq 0$.
Similarly as Theorem \ref{existence1}, we use notation
\begin{align}
&B^{+}:=\{\varphi>0 \, ; \, V^{+}(\phi)>0 \ \text{for} \ \phi \in (0,\varphi] \},
&V^{+}(\phi):=\int_{0}^{\phi} \left(\rho_{i}^{+}(\varphi) - n_{e}(\varphi) \right)  d\vphi,
\label{V+}\\
&B^{-}:=\{\varphi<0 \, ; \, V^{-}(\phi)>0 \ \text{for} \ \phi \in [\varphi,0) \},
&V^{-}(\phi):=\int_{0}^{\phi} \left(\rho_{i}^{-}(\varphi) - n_{e}(\varphi) \right)  d\vphi,
\label{V-}
\end{align}
where
\begin{align}
\rho_{i}^{+}(\phi)
&:=\int_{{\mathbb R}^3}f_\infty(\xi)\frac{|\xi_1|}{\sqrt{\xi_1^2+2\phi}}\,d\xi
\notag \\
&\quad +\frac{2 }{1-\alpha}\int_{\sqrt{2(\phi_b-\phi)\chi(\phi_b-\phi)}}^{\sqrt{2\phi_b}}\int_{\mathbb R^2}
f_b(\xi)\frac{\xi_1}{\sqrt{\xi_1^2+2(\phi-\phi_b)}}\,d\xi_1\,d\xi' &
\text{for $\phi \geq 0$},
\label{rho++}\\
\rho_{i}^{-}(\phi)
&:=\int_{{\mathbb R}^3}f_\infty(\xi)\frac{|\xi_1|}{\sqrt{\xi_1^2+2\phi}}
\chi(\xi_1^2+2\phi)\,d\xi &
\text{for $\phi \leq 0$}.
\label{rho--}
\end{align}

The functions $\rho_{i}^{\pm}$ are well-defined for $f_{b} \in L^{1}(\mathbb R^{3}_{+})$ and
$f_{\infty} \in L^{1}(\mathbb R^{3})$ with $f_{b},f_{\infty} \geq 0$, since all the integrants are nonnegative. 
Furthermore, for $r>2$ and $M>0$, it is seen \footnote{If the problem \eqref{sVP1} has a solution $(f,\phi)$, then $\rho_{i}^{+}\in C([0,\phi_{b}])$ and $\rho_{i}^{-}\in C([\phi_{b},0])$ must hold, whether or not $f_b\in L^{r}(0,\sqrt{2\phi_{b}};L^{1}(\mathbb R^{2}))$ and $f_{\infty} \in L^{r}_{loc}(\mathbb R;L^{1}(\mathbb R^{2}))$ hold. For more details, see Lemmas \ref{lem41} and \ref{lem42}.}that 
\begin{align}
|\rho_{i}^{+}(\phi)| & \leq \|f_{\infty}\|_{L^{1}(\mathbb R^{3})} + C\|f_{b}\|_{L^{r}(0,\sqrt{2\phi_{b}};L^{1}(\mathbb R^{2}))} & \text{for }  & \phi \geq 0,
\label{rho+}\\
|\rho_{i}^{-}(\phi)|  &\leq  \sqrt{2} \|f_{\infty}\|_{L^{1}(\mathbb R^{3})} +C_{M}\|f_{\infty}\|_{L^{r}(-2\sqrt{M},2\sqrt{M};L^{1}(\mathbb R^{2}))} & \text{for } & \phi \in [-M,0],
\label{rho-}
\end{align}
where $C$ is a positive constant, and $C_{M}$ is a positive constant depending on $M$. 
The proofs of the estimates are postponed until  Appendix \ref{C}. 
In the definition \eqref{rho++}, the indicator function $\chi(\phi_b-\phi)$ is used 
to extend continuously $\rho_{i}^{+}$ beyond $\phi=\phi_{b}$. 

Now we state main theorems on the solvability by employing the function spaces $ L^{r}(a,b;L^{1}(\mathbb R^{2}))$ and $L^{r}_{loc}(\mathbb R;L^{1}(\mathbb R^{2}))$. Here the set $B^{+}$ depends on $\phi_{b}$ whereas the set $B^{-}$ is independent of $\phi_{b}$.

\begin{thm} \label{existence2}
{\rm (Attractive boundary)} \ Let $\phi_b>0$, $f_b\in L^1(\R_+^3) \cap L^{r}(0,\sqrt{2\phi_{b}};L^{1}(\mathbb R^{2}))$,  \footnote{We can construct multiple solutions for the case $\alpha = 1$, and therefore we exclude it.}$\alpha \neq 1$, $f_{\infty} \in L^{1}(\mathbb R^{3})$, and $f_{b},f_{\infty} \geq 0$ for some $r>2$.
Suppose \footnote{If a solution exists, then \eqref{netrual1}, \eqref{need3}, and $V^{+} \in C^{1}([0,\phi_{b}])$ must hold. For more details, see Lemma \ref{lem41}.}that \eqref{netrual1}, \eqref{need3}, and $V^{+} \in C^{2}([0,\phi_{b}])$ hold. 

\begin{enumerate}[(i)]
\item Assume that $d^{2} V^{+}/d\phi^{2}(0)>0$.
Then the set $B^{+}$ is not empty. 
Furthermore, \footnote{If $\supp f_{b} \subset (c,\infty)\times \mathbb R^{2}$ for some $c>0$, the second term in the definition of $\rho_{i}^{+}$ vanishes for $\phi_{b}<c^{2}/2$. This means that $B^{+}$ is independent of $\phi_{b}$. In this case, $\phi_{b}<\sup B^{+}$ holds for $\phi_{b} \ll 1$.}if and only if $\phi_{b}<\sup B^{+}$ holds, the problem \eqref{sVP1} has a unique solution $(f,\phi)$.
There also hold that 
\begin{gather}
\begin{aligned}
{f}(x,\xi)
& =f_\infty(-\sqrt{\xi_1^2-2{\phi}(x)},\xi')
\chi(\xi_{1}^{2}-2\phi(x))\chi(-\xi_{1}) \\
& \qquad
+\frac{1}{1-\alpha}\,f_b(\sqrt{\xi_1^2-2{\phi}(x)+2\phi_b},\xi')
\chi(-\xi_1^2+2{\phi}(x)) \\
& \qquad
+f_\infty(\sqrt{\xi_1^2-2{\phi}(x)},\xi')
\chi(\xi_{1}^{2}-2\phi(x))\chi(\xi_{1}),
\end{aligned}
\label{fform2} \\
|\D_{x}^{l} \phi(x)| \leq C e^{- c x}, \quad l=0,1,2, 
\label{decay2}
\end{gather}
where $c$ and $C$ are positive constants independent of $x$. 
\item Assume that $d^{2} V^{+}/d\phi^{2}(0)=0$ and $B^{+}\neq \emptyset$.
If and only if $\phi_{b}<\sup B^{+}$ holds, the problem \eqref{sVP1} has a unique solution $(f,\phi)$.
Furthermore, \eqref{fform2} holds. 
\item Assume that $d^{2} V^{+}/d\phi^{2}(0)=0$ and $B^{+}= \emptyset$.
Then the problem \eqref{sVP1} admits no solution.
\item Assume that $d^{2} V^{+}/d\phi^{2}(0)<0$. 
Then the problem \eqref{sVP1} admits no solution.
\end{enumerate}

\end{thm}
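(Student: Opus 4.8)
The plan is the reduction announced in the introduction: solve the Vlasov equation \eqref{seq1} along characteristics to write $f$ as an explicit functional of $\phi$, insert it into the Poisson equation \eqref{seq2} to obtain a single first-order ODE for $\phi$, and analyze that ODE by an energy-integral (phase-plane) argument in the spirit of \cite{M.S.1}.

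\textbf{Step 1 (characteristics, reduced equation).} Since $\phi\in C^1(\overline{\mathbb R_+})$ is monotone with $\phi(0)=\phi_b>0=\phi(\infty)$, it is strictly decreasing, so $\D_x\phi<0$ and $\phi(x)\in(0,\phi_b]$. Along each characteristic the energy $E=\tfrac12\xi_1^2-\phi(x)$ is conserved, and since $0\le\phi<\phi_b$ the kinetic part $\tfrac12\xi_1^2=E+\phi$ stays positive whenever $E>0$; this lets one classify $(x,\xi)$ by $\sgn E$ and $\sgn\xi_1$. Points with $E>0$, $\xi_1<0$ come from $x=\infty$ and carry $f_\infty(-\sqrt{\xi_1^2-2\phi},\xi')$; points with $E>0$, $\xi_1>0$ escape to $x=\infty$ and, using \eqref{need3}, carry $f_\infty(\sqrt{\xi_1^2-2\phi},\xi')$; points with $E<0$ are trapped between the wall and the turning point $\phi=-E$, and since each reflection multiplies the value by $\alpha$, summing $\sum_{k\ge0}\alpha^k=(1-\alpha)^{-1}$ (here $\alpha\ne1$ matters) gives the middle term. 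This yields \eqref{fform2}, and integrating it in $\xi$ — with the substitutions $\xi_1\mapsto\pm\sqrt{\xi_1^2\mp2\phi}$ and $\xi_1\mapsto\sqrt{\xi_1^2+2(\phi_b-\phi)}$ — reproduces $\rho_i^+(\phi(x))$ exactly as in \eqref{rho++}. The trapped term has an integrable $-1/2$ singularity at its lower limit $\sqrt{2(\phi_b-\phi)}$, which is precisely why $f_b\in L^{r}(0,\sqrt{2\phi_b};L^1(\mathbb R^2))$ with $r>2$ suffices to get $\rho_i^+\in C([0,\phi_b])$, cf.\ \eqref{rho+}. Thus \eqref{seq2} becomes $\D_{xx}\phi=\rho_i^+(\phi)-n_e(\phi)$ on $\mathbb R_+$, with $\phi(0)=\phi_b$, $\phi(\infty)=0$, $\D_x\phi(\infty)=0$.

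\textbf{Step 2 (first integral, trichotomy).} Multiplying by $\D_x\phi$ and integrating from $x$ to $\infty$ yields $\tfrac12(\D_x\phi)^2=V^+(\phi)$, hence $\D_x\phi=-\sqrt{2V^+(\phi)}$. As $\D_x\phi\ne0$ on $\overline{\mathbb R_+}$ and $x\mapsto\phi(x)$ is onto $(0,\phi_b]$, any solution forces $V^+>0$ on $(0,\phi_b]$, i.e.\ $\phi_b\in B^+$; since $B^+$ is an open interval $(0,\sup B^+)$ with $V^+(\sup B^+)=0$ when $\sup B^+<\infty$, this is equivalent to $\phi_b<\sup B^+$ — the stated necessity. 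Conversely \eqref{netrual1} gives $V^+(0)=0$ and $(V^+)'(0)=\rho_i^+(0)-n_e(0)=1-1=0$, so $V^+(\phi)=O(\phi^2)$ near $0$, the travel-time integral $\int_0^{\phi_b}(2V^+(\phi))^{-1/2}d\phi$ diverges at $\phi=0$, and the local shape of $V^+$ is controlled by $d^2V^+/d\phi^2(0)$. In case (i), $V^+(\phi)\sim\tfrac12\frac{d^2V^+}{d\phi^2}(0)\phi^2$ near $0$, so $B^+\ne\emptyset$, and for $\phi_b<\sup B^+$ the problem $\D_x\phi=-\sqrt{2V^+(\phi)}$, $\phi(0)=\phi_b$, has a unique global strictly decreasing solution with $\phi\to0$; here $\phi\lesssim e^{-cx}$, and then $|\D_x\phi|=\sqrt{2V^+(\phi)}\lesssim\phi$ and $|\D_{xx}\phi|=|(V^+)'(\phi)-(V^+)'(0)|\lesssim\phi$, giving \eqref{decay2}. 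Case (ii) is the same construction, $B^+\ne\emptyset$ keeping $V^+>0$ near $0^+$ (no rate claimed). In cases (iii)--(iv) no solution exists: one would force $V^+>0$ on $(0,\phi_b)$, hence $(0,\phi_b)\subset B^+$ and $B^+\ne\emptyset$, contradicting (iii); and in (iv) $d^2V^+/d\phi^2(0)<0$ makes $V^+<0$ near $0^+$, contradicting $V^+>0$ there.

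\textbf{Step 3 (back to $(f,\phi)$; main obstacles).} Given $\phi$, define $f$ by \eqref{fform2} and check Definition \ref{DefS1}: continuity of $\phi$ with $f_b,f_\infty\in L^1$ gives $f\in C(\overline{\mathbb R_+};L^1(\mathbb R^3))$ and \eqref{weak2} by dominated convergence; $f$ is constant along characteristics and satisfies the pointwise relation \eqref{sbc1} (immediate from \eqref{fform2} and \eqref{need3}), so integrating by parts against test functions in $\mathcal X$ gives \eqref{weak1}; and $V^+\in C^2([0,\phi_b])$ makes $\rho_i^+-n_e\in C^1$, so $\phi$ solves \eqref{seq2} classically on $\mathbb R_+$. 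Uniqueness reduces to uniqueness for the scalar ODE, which holds away from $\phi=0$ (locally Lipschitz right side) and is pinned by $\phi(0)=\phi_b$. The main obstacles I anticipate are: the characteristic bookkeeping for the trapped population and the reflection-series summation producing the $(1-\alpha)^{-1}$ factor and the exact form of \eqref{rho++}; and the degenerate first-order ODE at $\phi=0$ — global solvability, the exponential rate in case (i), and the sharp trichotomy keyed to $d^2V^+/d\phi^2(0)$ and $B^+$. A subsidiary technical point is the continuity of $\rho_i^+$ up to $\phi=\phi_b$, resting on \eqref{rho+} and $r>2$.
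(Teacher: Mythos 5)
Your proposal follows essentially the same route as the paper's proof: characteristics give \eqref{fform2} (with the $(1-\alpha)^{-1}$ factor for the trapped population), integration in $\xi$ produces $\rho_i^+$ and the first integral $(\partial_x\phi)^2=2V^+(\phi)$, and the trichotomy keyed to $d^2V^+/d\phi^2(0)$ and $B^+$ — including the necessity of $\phi_b<\sup B^+$ and the degenerate ODE at $\phi=0$ — is handled exactly as in the paper. The only step you compress is the verification that $f$ defined by \eqref{fform2} belongs to $C(\overline{\mathbb R_+};L^1(\mathbb R^3))$ and satisfies \eqref{weak1} for merely integrable data (the paper approximates $f_\infty,f_b$ by smooth compactly supported functions and uses H\"older with $r>2$ on the trapped term), but you correctly flag this as the technical point resting on $r>2$.
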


\begin{thm} \label{existence3}
\noindent
{\rm (Repulsive boundary)} \ Let $\phi_b<0$, $f_b\in L^1(\R_+^3)$, $f_{\infty} \in L^{1}(\mathbb R^{3}) \cap L^{r}_{loc}(\mathbb R;L^{1}(\mathbb R^{2}))$, and $f_{b},f_{\infty} \geq 0$ for some $r>2$.
Suppose \footnote{If a solution exists, then \eqref{netrual1}, \eqref{need4}, and $V^{-} \in C^{1}([\phi_{b},0])$ must hold.
For more details, see Lemma \ref{lem42}.}that \eqref{netrual1}, \eqref{need4}, and $V^{-} \in C^{2}([\phi_{b},0])$ hold.
\begin{enumerate}[(i)]
\item Assume that $d^{2} V^{-}/d\phi^{2}(0)>0$.
Then the set $B^{-}$ is not empty. 
Furthermore, 
if and only if $\phi_{b}>\inf B^{-}$ holds, the problem \eqref{sVP1} has a unique solution $(f,\phi)$.
There also hold that \eqref{decay2} and
\begin{gather}
\begin{aligned}
f(x,\xi)=f_{\infty}(-\sqrt{\xi_{1}^{2}-2\phi(x)},\xi')\chi(-\xi_{1})+f_{\infty}(\sqrt{\xi_{1}^{2}-2\phi(x)},\xi')\chi(\xi_{1}).
\end{aligned}
\label{fform5}
\end{gather}
\item Assume that $d^{2} V^{-}/d\phi^{2}(0)=0$ and $B^{-}\neq \emptyset$.
If and only if $\phi_{b}>\inf B^{-}$ holds, the problem \eqref{sVP1} has a unique solution $(f,\phi)$.
Furthermore, \eqref{fform5} holds. 
\item Assume that $d^{2} V^{-}/d\phi^{2}(0)=0$ and $B^{-}= \emptyset$.
Then the problem \eqref{sVP1} admits no solution.
\item Assume that $d^{2} V^{-}/d\phi^{2}(0)<0$. 
Then the problem \eqref{sVP1} admits no solution.
\end{enumerate}

\end{thm}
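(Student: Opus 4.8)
The plan is to reduce the boundary value problem \eqref{sVP1} to a scalar first-order autonomous ODE for $\phi$ alone, in the spirit announced in the introduction, and then to analyse that ODE by a phase-plane argument. Since $\phi_b<0$ and $\phi(\infty)=0$, any monotone $\phi$ is increasing, so I work under $\D_x\phi>0$ throughout. First I would solve the Vlasov equation \eqref{seq1} by characteristics: the energy $\tfrac12\xi_1^2-\phi(x)$ is conserved along them, and because $\phi\le0$ on $\overline{\mathbb R_+}$ the combination $\xi_1^2-2\phi(x)$ is nonnegative, so the square roots below are always defined. Matching the distribution along each characteristic with \eqref{sbc2} forces $f$ to be given by \eqref{fform5}, and imposing the wall condition \eqref{sbc1} then produces exactly the compatibility relation \eqref{need4}; its necessity, together with \eqref{netrual1} and the regularity of $V^-$, is the content of Lemma \ref{lem42}, and conversely \eqref{need4} makes \eqref{fform5} consistent with all four boundary conditions. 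Integrating \eqref{fform5} in $\xi$ after the substitution $\eta_1=\pm\sqrt{\xi_1^2-2\phi}$ gives $\int_{\mathbb R^3}f(x,\xi)\,d\xi=\rho_i^-(\phi(x))$ with $\rho_i^-$ as in \eqref{rho--}; the hypothesis $f_\infty\in L^r_{loc}(\mathbb R;L^1(\mathbb R^2))$ with $r>2$ is precisely what renders the mildly singular factor $(\xi_1^2+2\phi)^{-1/2}$ integrable uniformly in $\phi$, so that $\rho_i^-$ is continuous --- indeed $C^2$, under the standing assumption $V^-\in C^2([\phi_b,0])$ --- and bounded as in \eqref{rho-} (recorded in Appendix \ref{C}).

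With this substituted, the Poisson equation \eqref{seq2} becomes $\D_{xx}\phi=\rho_i^-(\phi)-n_e(\phi)$; multiplying by $\D_x\phi$, integrating from $x$ to $\infty$, and using $\phi(\infty)=\D_x\phi(\infty)=0$ (from the expected decay) yields the first integral $\tfrac12(\D_x\phi)^2=V^-(\phi(x))$, i.e. $\D_x\phi=\sqrt{2V^-(\phi)}$ with $V^-$ as in \eqref{V-}. Conversely, solving this ODE with $\phi(0)=\phi_b$ and defining $f$ by the right-hand side of \eqref{fform5} returns a genuine solution of \eqref{sVP1}, so the whole problem is equivalent to the scalar boundary value problem $\D_x\phi=\sqrt{2V^-(\phi)}$, $\phi(0)=\phi_b$, $\lim_{x\to\infty}\phi(x)=0$. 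Since $\D_x\phi>0$ on $[0,\infty)$, the first integral forces $V^-(\phi)>0$ for every $\phi$ in the range $[\phi_b,0)$ of $\phi$, i.e. $\phi_b\in B^-$; this already gives the ``only if'' direction and the non-existence in cases (iii) and (iv) once $V^-$ near $0$ is controlled.

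The decisive point is the expansion of $V^-$ at $\phi=0$. Because $\rho_i^-(0)=\|f_\infty\|_{L^1(\mathbb R^3)}=1=n_e(0)$ by \eqref{netrual1}, we get $V^-(0)=0$ and $dV^-/d\phi(0)=\rho_i^-(0)-n_e(0)=0$; differentiating once more under the integral sign, which is legitimate precisely because $r>2$, gives $d^2V^-/d\phi^2(0)=(\rho_i^-)'(0)-n_e'(0)=1-\int_{\mathbb R^3}\xi_1^{-2}f_\infty(\xi)\,d\xi$, so the three sign cases of the statement are the strict kinetic Bohm criterion \eqref{Bohm2}, its borderline equality, and its failure \eqref{notBohm0}. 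In case (i) this second derivative is positive, hence $V^->0$ on some interval $(\varphi_\ast,0)$ and $B^-\neq\emptyset$; since $\varphi\in B^-$ and $\varphi<\varphi'<0$ imply $\varphi'\in B^-$, the set $B^-$ is an interval $(\inf B^-,0)$ with $V^-(\inf B^-)=0$ by continuity, so $\phi_b\in B^-$ is equivalent to $\phi_b>\inf B^-$. When this holds, $V^->0$ on $[\phi_b,0)$ and vanishes quadratically at $0$, so $\int_{\phi_b}^{\psi}d\varphi/\sqrt{2V^-(\varphi)}$ is finite for each $\psi\in(\phi_b,0)$ but diverges as $\psi\to0^-$; inverting $x=\int_{\phi_b}^{\phi(x)}d\varphi/\sqrt{2V^-(\varphi)}$ produces the unique increasing solution $\phi\colon[0,\infty)\to[\phi_b,0)$, which is the unique solution of \eqref{sVP1} since Definition \ref{DefS1}(ii) and the boundary data fix the sign of $\D_x\phi$, the characteristic representation fixes $f$, and the reduced ODE fixes $\phi$. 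The quadratic vanishing also gives $\D_x\phi\sim c|\phi|$ near infinity with $c=\sqrt{d^2V^-/d\phi^2(0)}$, hence the exponential bound \eqref{decay2} for $\phi$ and then, via the equation, for $\D_x\phi$ and $\D_{xx}\phi$. The case $d^2V^-/d\phi^2(0)=0$ is treated by the identical first-integral and quadrature argument, now reading $B^-$ off the finer behaviour of $V^-$, which gives (ii) and (iii); the only loss is \eqref{decay2}, because the linearisation at $\phi=0$ is degenerate, which is why (ii) omits it. Finally, if $d^2V^-/d\phi^2(0)<0$ then $V^-<0$ just left of $0$, so no $\phi_b<0$ belongs to $B^-$, and case (iv) follows from the non-existence criterion above.

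The step I expect to be the real obstacle is the first one: upgrading a weak solution in the sense of Definition \ref{DefS1} to the pointwise representation \eqref{fform5}, and, in tandem, establishing that $\rho_i^-$ extends continuously --- indeed as a $C^2$ function --- up to the left endpoint $\phi=\phi_b$. The factor $(\xi_1^2+2\phi)^{-1/2}$ is only marginally integrable, and controlling it uniformly for $\phi$ near $\phi_b$, where the velocities close to the turning value dominate, is exactly where the exponent $r>2$ is used; this is the one genuinely technical ingredient, handled through Lemma \ref{lem42} and Appendix \ref{C}. Everything downstream is the same soft phase-plane analysis already carried out for Theorem \ref{existence1}.
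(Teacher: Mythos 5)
Your proposal is correct and follows essentially the same route as the paper: characteristics give the representation \eqref{fform5} and the necessary condition \eqref{need4}, integration in $\xi$ reduces \eqref{seq2} to $\partial_{xx}\phi=\rho_i^-(\phi)-n_e(\phi)$ with first integral $(\partial_x\phi)^2=2V^-(\phi)$, and the four cases are read off the Taylor expansion of $V^-$ at $0$; your explicit quadrature is just a variant of the paper's Lipschitz-continuity-of-$\sqrt{V^-}$ argument. The only slight imprecision is attributing the $C^2$ regularity of $V^-$ (and the formula $d^2V^-/d\phi^2(0)=1-\int\xi_1^{-2}f_\infty\,d\xi$) to the exponent $r>2$ --- that exponent only yields $\rho_i^-\in C([\phi_b,0])$, i.e.\ $V^-\in C^1$, while $V^-\in C^2$ is a standing hypothesis of the theorem --- but this does not affect the argument since the theorem is phrased directly in terms of $d^2V^-/d\phi^2(0)$.
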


From these theorems, we conclude that
the following conditions are general representations of the kinetic Bohm criterion:
\begin{gather}\label{gBohm0}
\frac{d^{2} V^{\pm}}{d\phi^{2}}(0)\geq 0.
\end{gather}
Indeed, it can be rewritten by the kinetic Bohm criterion \eqref{Bohm1}
for the case $f_{b}=0$, $\alpha=0$, and $\phi_{b}>0$ (for details, see \eqref{V2}).
We can find some functions $(f_{b},f_{\infty})$ so that $V^{\pm} \in C^{2}$ and \eqref{gBohm0} hold, 
for example, the functions $(g_{b,\ve},g_{\infty,\ve})$ defined in \eqref{fb2} and \eqref{fve2} below.
We introduce some more general functions in Appendix \ref{B0}.
Theorems \ref{existence2} and \ref{existence3} will be shown in subsections \ref{S4.1} and \ref{S4.2}, respectively.

We also state the results on the delta mass limit.
Let us define the functions \footnote{
We first determine $g_{\infty,\ve}$ as \eqref{fve2},
and then find a suitable $g_{b,\ve}$ as \eqref{fb2}
so that the necessary conditions \eqref{need3} and \eqref{need4} hold.
This choice of $g_{\infty,\ve}$ is 
one of the simplest extensions of $f_{\infty,\ve}$ in \eqref{fve}.}$g_{b,\ve}$ and $g_{\infty,\ve}$ as
\begin{align}
g_{b,\ve}(\xi)&:=\frac{m_{b}}{\ve^{3}}\vphi\left(\frac{(\sqrt{\xi_{1}^{2}-2\phi_{b}},\xi')-(v_{b},0,0)}{\ve}\right) \chi(\xi_{1}^{2}-2\phi_{b}), \quad \xi \in {\mathbb R}^{3}_{+},
\label{fb2} \\
g_{\infty,\ve}(\xi)&:=\frac{m_{\infty}}{\ve^{3}}\vphi \left(\frac{\xi-(-v_{\infty},0,0)}\ve\right)
+\frac{m_{b}}{\ve^{3}}\vphi\left(\frac{\xi-(v_{b},0,0)}{\ve}\right)
\notag \\
&\qquad +\alpha\frac{m_{\infty}}{\ve^{3}}\vphi \left(\frac{(-\xi_{1},\xi')+(v_{\infty},0,0)}\ve\right), \quad
\xi \in {\mathbb R}^{3},
\label{fve2}
\end{align}
where $\ve$ and $\varphi$ are defined in \eqref{vphi0}, 
and $m_{b}\geq 0$, $m_{\infty}>0$, $v_{b}> 0$, and $v_{\infty}>0$ are constants. 
It is supposed that 
\begin{gather}
m_{b}+(1+\alpha)m_{\infty}=1, \quad
m_{b}v_{b}^{-2}+(1+\alpha)m_{\infty}v_{\infty}^{-2}<1,
\label{velocity1}
\end{gather}
which ensures \eqref{netrual1} and $\int_{\mathbb R^{3}}\xi_{1}^{-2}g_{\infty,\ve}(\xi) d\xi < 1$ for $\ve \ll 1$.
Let us denote by $(f_{\ve},\phi_{\ve})$ 
the solution of the problem \eqref{sVP1} with $(f_{b},f_{\infty})=(g_{b,\ve},g_{\infty,\ve})$.
The important thing to note here is that $g_{b,\ve}$ and $g_{\infty,\ve}$ satisfy
the conditions \eqref{netrual1}--\eqref{need4} and $d^{2} V^{\pm}/d\phi^{2}(0)=-\int_{\mathbb R^{3}}\xi_{1}^{-2}g_{\infty,\ve}(\xi) d\xi+1>0$
being in Theorems \ref{existence2} and \ref{existence3} for the case $|\phi_{b}| \ll 1$ and $\ve\ll 1$.
In this case, $\sup B^{+}$ is independent of $\phi_{b}$, and also $\phi_{b}<\sup B^{+}$ holds, 
since the second term in the definition \eqref{rho++} of $\rho_{i}^{+}$ vanishes.
Note that $\inf B^{-}$ is always independent of $\phi_{b}$.
We also introduce the moments 
\begin{gather*}
\rho_{\ve}(x):=\int_{{\mathbb R}^{3}} f_{\ve}(x,\xi) d\xi , \quad u_{\ve}(x):= \frac{1}{\rho_{\ve}(x)}\int_{{\mathbb R}^{3}} \xi_{1} f_{\ve} (x,\xi) d\xi.
\end{gather*}

\begin{thm}\label{DiffThm2}
Let $\phi_{b}\neq 0$, $\alpha \neq 1$, and \eqref{velocity1} hold.
There exist positive constants $\ve_{0}$, $\delta_{0}$, and $C_{0}$ such that if 
$|\phi_{b}|<\delta_{0}$, then the following holds:
\begin{gather}\label{conver5}
\sup_{x \in {\mathbb R}_{+}} |\rho_{\ve}(x) - \rho(x)| 
+\sup_{x \in {\mathbb R}_{+}} |\rho_{\ve}u_{\ve}(x) - \rho u(x)|
+\sup_{x \in {\mathbb R}_{+}} |\phi_{\ve}(x) - \phi(x)| \leq C_{0}\ve
\end{gather}
for any $\ve \in (0,\ve_{0})$, where $\phi=\phi(x)$ solves a boundary value problem
\begin{subequations}\label{Hphi0}
\begin{gather}
\D_{xx}\phi=
\frac{m_{b}v_{b}}{\sqrt{v_{b}^{2}+2\phi}}
+\frac{(1+\alpha)m_{\infty}v_{\infty}}{\sqrt{v_{\infty}^{2}+2\phi}}
-n_{e}(\phi), \quad x>0,
\label{Hphi1}\\
\phi(0)=\phi_{b}, \quad \lim_{x\to 0}\phi(x)=0,
\label{Hphi2}
\end{gather}
\end{subequations}
and $\rho=\rho(x)$ and $u=u(x)$ are defined as 
\begin{gather}\label{Hrho}
\rho(x):=\frac{m_{b}v_{b}}{\sqrt{v_{b}^{2}+2\phi(x)}}+
\frac{(1+\alpha)m_{\infty}v_{\infty}}{\sqrt{v_{\infty}^{2}+2\phi(x)}},
 \quad
u(x):=\frac{m_{b}v_{b}+(\alpha-1)m_{\infty}v_{\infty}}{\rho(x)}.
\end{gather}
\end{thm}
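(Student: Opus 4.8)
The plan is to follow the scheme of Theorem~\ref{DiffThm1}: reduce both the Vlasov--Poisson solution and its candidate limit to first-order autonomous ODEs for the potential, and compare those ODEs quantitatively. By the reduction used to prove Theorems~\ref{existence2} and~\ref{existence3} (characteristics plus the technique of~\cite{M.S.1}), the potential $\phi_{\ve}$ of the solution with $(f_b,f_\infty)=(g_{b,\ve},g_{\infty,\ve})$ satisfies $\D_{xx}\phi_{\ve}=\rho_{i,\ve}^{\pm}(\phi_{\ve})-n_e(\phi_{\ve})$, where $\rho_{i,\ve}^{\pm}$ is $\rho_i^{\pm}$ built from $(g_{b,\ve},g_{\infty,\ve})$ and the sign is that of $\phi_b$; multiplying by $\D_x\phi_{\ve}$ and integrating from $x$ to $\infty$ with \eqref{decay2} gives the energy identity $\tfrac12(\D_x\phi_{\ve})^2=V_{\ve}^{\pm}(\phi_{\ve})$, hence $\D_x\phi_{\ve}=-\operatorname{sgn}(\phi_b)\sqrt{2V_{\ve}^{\pm}(\phi_{\ve})}$ with $\phi_{\ve}(0)=\phi_b$, where $V_{\ve}^{\pm}$ is $V^{\pm}$ of \eqref{V+}--\eqref{V-} built from $(g_{b,\ve},g_{\infty,\ve})$. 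For the limit object I set $\rho_{i,0}(\phi):=\frac{m_bv_b}{\sqrt{v_b^2+2\phi}}+\frac{(1+\alpha)m_\infty v_\infty}{\sqrt{v_\infty^2+2\phi}}$ and $W(\phi):=\int_0^\phi(\rho_{i,0}-n_e)\,d\varphi$; then \eqref{velocity1} forces $W(0)=W'(0)=0$ and $W''(0)=1-m_bv_b^{-2}-(1+\alpha)m_\infty v_\infty^{-2}>0$, so for $|\phi_b|<\delta_0$ small an argument parallel to Proposition~\ref{EPsol} gives a unique monotone $\phi$ with $\tfrac12(\D_x\phi)^2=W(\phi)$, $\phi(0)=\phi_b$, and exponential decay; this is exactly the solution of \eqref{Hphi0}. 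Finally, integrating the structure formulas \eqref{fform2}, \eqref{fform5} in $\xi$ (change of variables $\zeta_1^2=\xi_1^2-2\phi$) yields $\rho_{\ve}(x)=\rho_{i,\ve}^{\pm}(\phi_{\ve}(x))$, while integrating \eqref{seq1} in $\xi$ shows $\rho_{\ve}u_{\ve}$ is constant in $x$ and equals $\int_{\mathbb R^3}\xi_1 g_{\infty,\ve}\,d\xi=m_bv_b+(\alpha-1)m_\infty v_\infty+O(\ve)$; since $\rho=\rho_{i,0}(\phi)$ and $\rho u\equiv m_bv_b+(\alpha-1)m_\infty v_\infty$ by \eqref{Hrho}, the theorem reduces to proving $\sup_x|\phi_{\ve}(x)-\phi(x)|\le C\ve$ together with $\|\rho_{i,\ve}^{\pm}-\rho_{i,0}\|_{C^1}\le C\ve$.

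The second ingredient is a quantitative mollification estimate. Because $v_b,v_\infty>0$ and $|\phi_b|<\delta_0$, the supports of $g_{b,\ve}$ and $g_{\infty,\ve}$ stay in a region where $\xi_1$ and $\xi_1^2+2\phi$ are bounded away from $0$ uniformly for $\phi$ between $0$ and $\phi_b$; moreover $\operatorname{supp}g_{b,\ve}\subset\{\xi_1>\sqrt{2\phi_b}\}$ for $\ve$ small, so the $f_b$-term in $\rho_{i,\ve}^{+}$ vanishes identically on $[0,\phi_b]$. Hence the standard bound $|\int\ve^{-3}\varphi((\xi-\xi_0)/\ve)g\,d\xi-g(\xi_0)|\le\ve\|\nabla g\|_{L^\infty}$, applied to $g(\xi)=|\xi_1|(\xi_1^2+2\phi)^{-1/2}$ and its $\phi$-derivative, gives $\|\rho_{i,\ve}^{\pm}-\rho_{i,0}\|_{C^1}\le C\ve$ on the relevant $\phi$-interval, hence $\|V_{\ve}^{\pm}-W\|_{C^2}\le C\ve$. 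In particular $(V_{\ve}^{\pm})''(0)=1-\int_{\mathbb R^3}\xi_1^{-2}g_{\infty,\ve}\,d\xi\to W''(0)>0$, so for $\ve$ small there are $0<c_1\le c_2$ with $c_1\phi^2\le V_{\ve}^{\pm}(\phi),W(\phi)\le c_2\phi^2$, and (since $(V_{\ve}^{\pm})'(0)=W'(0)=0$) $|V_{\ve}^{\pm}(\phi)-W(\phi)|\le C\ve\phi^2$. Consequently $F_{\ve}(\phi):=-\operatorname{sgn}(\phi_b)\sqrt{2V_{\ve}^{\pm}(\phi)}$ and $F_0(\phi):=-\operatorname{sgn}(\phi_b)\sqrt{2W(\phi)}$ are Lipschitz on the interval with a constant $L$ independent of $\ve$, satisfy $\|F_{\ve}-F_0\|_{L^\infty}\le C\ve$, and $F_0'(0)=-\sqrt{W''(0)}<0$, so $F_0'(\theta)\le-c_0<0$ for $\theta$ in a one-sided neighbourhood of $0$.

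It then remains to compare the two autonomous flows $\D_x\phi_{\ve}=F_{\ve}(\phi_{\ve})$ and $\D_x\phi=F_0(\phi)$ with common datum $\phi_b$. Both decay exponentially, so there is $X_0$, \emph{independent of} $\ve$, with $|\phi_{\ve}(x)|,|\phi(x)|\le\eta$ for $x\ge X_0$. On $[0,X_0]$, Gronwall applied to $|\phi_{\ve}(x)-\phi(x)|\le\int_0^x(L|\phi_{\ve}-\phi|+\|F_{\ve}-F_0\|_\infty)\,ds$ yields $|\phi_{\ve}(x)-\phi(x)|\le C\ve e^{LX_0}=:C_1\ve$. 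On $[X_0,\infty)$, writing $w:=\phi_{\ve}-\phi$ and $F_0(\phi_{\ve})-F_0(\phi)=F_0'(\theta)w$ with $F_0'(\theta)\le-c_0$, I obtain $\frac{d}{dx}|w|\le-c_0|w|+C\ve$, so $|w(x)|\le|w(X_0)|e^{-c_0(x-X_0)}+\tfrac{C}{c_0}\ve\le(C_1+\tfrac{C}{c_0})\ve$. Thus $\sup_x|\phi_{\ve}-\phi|\le C_0\ve$, whence $|\D_x\phi_{\ve}-\D_x\phi|=|F_{\ve}(\phi_{\ve})-F_0(\phi)|\le\|F_{\ve}-F_0\|_\infty+L|\phi_{\ve}-\phi|\le C_0\ve$; combined with the first paragraph, $|\rho_{\ve}-\rho|\le|\rho_{i,\ve}^{\pm}(\phi_{\ve})-\rho_{i,0}(\phi_{\ve})|+\|\rho_{i,0}'\|_\infty|\phi_{\ve}-\phi|\le C_0\ve$ and $|\rho_{\ve}u_{\ve}-\rho u|\le C_0\ve$, which is \eqref{conver5}.

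The main obstacle is precisely this last comparison over the unbounded half-line. A naïve Gronwall bound grows like $e^{Lx}$ and destroys the $O(\ve)$ rate; the point that saves it is that $\phi=0$ is a \emph{stable} rest point of $F_0$ — a consequence of the generalized Bohm condition $W''(0)>0$ — so that on the tail the Gronwall estimate contracts rather than grows, and the rate $O(\ve)$ survives. The remaining work is routine: verifying the support and continuity properties of $g_{b,\ve},g_{\infty,\ve}$ that make the mollifier bounds and the vanishing of the $f_b$-term rigorous, checking the change-of-variables identity $\rho_{\ve}(x)=\rho_{i,\ve}^{\pm}(\phi_{\ve}(x))$ from \eqref{fform2}, \eqref{fform5}, and recording the well-posedness of \eqref{Hphi0} along the lines of Proposition~\ref{EPsol}.
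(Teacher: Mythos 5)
Your proposal is correct and lands on the same overall skeleton as the paper's proof (reduce everything to the potential, use that $g_{\infty,\ve}$ concentrates at the three points $(-v_{\infty},0,0)$, $(v_{b},0,0)$, $(v_{\infty},0,0)$ with $|\xi_{1}|$ bounded away from zero to get $O(\ve)$ mollifier errors, note that the $g_{b,\ve}$-term in $\rho_{i}^{+}$ vanishes for $\phi_{b}\ll1$, and finish the $\rho_{\ve}$ and $\rho_{\ve}u_{\ve}$ estimates from $\rho_{\ve}=\rho_{i}^{\pm}(\phi_{\ve})$ and $\rho_{\ve}u_{\ve}=\int\xi_{1}g_{\infty,\ve}\,d\xi$). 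The one place where you genuinely diverge is the central estimate $\sup_{x}|\phi_{\ve}-\phi|\leq C\ve$. The paper stays at the level of the second-order equations: it writes $\partial_{xx}(\phi_{\ve}-\phi)=W(\phi_{\ve})-W(\phi)+R_{\ve}$ with $R_{\ve}(x)=\rho_{i,\ve}^{\pm}(\phi_{\ve}(x))-\rho_{i,0}(\phi_{\ve}(x))$, and tests against $(\phi_{\ve}-\phi-M_{\ve}/c_{0})_{+}$, so that the monotonicity $dW/d\phi\geq c_{0}>0$ (a consequence of \eqref{velocity1}) yields the barrier bound $|\phi_{\ve}-\phi|\leq M_{\ve}/c_{0}$ in one stroke, with no need to compare $V_{\ve}^{\pm}$ and its limit. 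You instead pass to the first-order reductions $\D_{x}\phi_{\ve}=F_{\ve}(\phi_{\ve})$, $\D_{x}\phi=F_{0}(\phi)$ and run a Gr\"onwall/flow comparison; this requires the extra (but correct) second-order Taylor comparison $|V_{\ve}^{\pm}(\phi)-W(\phi)|\leq C\ve\phi^{2}$ to tame the square-root singularity of $F_{\ve}-F_{0}$ at $\phi=0$, and you rightly identify that naive Gr\"onwall on the half-line would lose the rate and that the stability $F_{0}'\leq-c_{0}$ of the rest point rescues it. In fact, since $W''\geq c>0$ on all of the relevant interval, $F_{0}'(\theta)\leq -c_{0}$ holds for every $\theta$ between $\phi(x)$ and $\phi_{\ve}(x)$, so your two-region splitting at $X_{0}$ is harmless but unnecessary — the contracting estimate is valid from $x=0$ on. Net comparison: the paper's elliptic-comparison argument is shorter and needs only $\sup_{x}|R_{\ve}|\leq C\ve$, while your dynamical argument gives slightly more (a pointwise $O(\ve)$ bound on $\D_{x}\phi_{\ve}-\D_{x}\phi$ for free) at the cost of the additional $C^{2}$-closeness of the effective potentials. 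Both hinge on the same sign condition $1-m_{b}v_{b}^{-2}-(1+\alpha)m_{\infty}v_{\infty}^{-2}>0$ from \eqref{velocity1}, and I see no gap in your version.
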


For the case $m_{b}=0$, i.e. $f_{b}=g_{b,\ve}=0$, the functions $(\rho,u,\phi)$ 
in Theorem \ref{DiffThm2} solve a problem
\begin{subequations}\label{Hmodel}
\begin{gather}
(\rho u)'=0, \quad 
\frac{(\alpha+1)^{2}}{(\alpha-1)^{2}}uu'=\phi', \quad 
\phi''=\rho-n_{e}(\phi), \quad 
x>0,
\\
  \inf_{x\in\mathbb R_{+}}\rho(x)>0, \qu
  \lim_{x\rightarrow \infty}(\rho,u,\phi)(x)=\left(1,\frac{\alpha-1}{\alpha+1}v_{\infty},0\right), \qu
  \phi(0)=\phi_b,
\end{gather}
\end{subequations}
which is similar to \eqref{sp0}. 
We conclude that \eqref{Hmodel} is a hydrodynamic model taking the reflection of positive ions
on the boundary into account.

\section{The Completely Absorbing and Attractive Boundary}\lb{S3}
In this section, we study the solvability and delta mass limit in Theorems \ref{existence1} and \ref{DiffThm1} for completely absorbing and attractive boundary, i.e. $f_{b}=\alpha=0$ and $\phi_{b}>0$.
We first show Assertion (iv) in Theorem \ref{existence1} in subsection \ref{S3.1}.
The proof gives simultaneously a simpler derivation of the kinetic Bohm criterion \eqref{Bohm1} than that of \cite{K.R.1},
and also shows that \eqref{need2} is a necessary condition.
In subsection \ref{S3.1}, we also prove Assertion (iii).
Subsections \ref{S3.2} deals with the proofs of  Assertions (i) and (ii) of Theorem \ref{existence1}.
We show Theorem \ref{DiffThm1} in subsection \ref{S3.3}.

\subsection{A Derivation of the Kinetic Bohm Criterion}\label{S3.1}
We discuss necessary conditions for the solvability 
by assuming that the solution $(f,\phi)$ of the problem \eqref{sVP1} with $f_{b}=\alpha=0$ and $\phi_{b}>0$ exists.
First we show the following lemma 
which ensures immediately Assertion (iv) in Theorem \ref{existence1}.
Note that \eqref{Bohm1} is the negation of \eqref{notBohm0}.

\begin{lem}\label{lem31}
Let $f_{b}=\alpha=0$, $\phi_{b}>0$, $f_{\infty} \in L^{1}(\mathbb R^{3})$, $f_{\infty} \geq 0$, and \eqref{netrual1} hold.
Suppose that  the problem \eqref{sVP1} has a solution $(f,\phi)$.
Then the function $f_{\infty}$ satisfies the condition \eqref{need2}
and the kinetic Bohm criterion \eqref{Bohm1}.
Furthermore, $\phi$ solves \eqref{phieq2} below with \eqref{sbc3} and \eqref{sbc4},
and $f$ is written as \eqref{fform} by $\phi$.
\end{lem}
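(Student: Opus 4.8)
The plan is to use the method of characteristics on the Vlasov equation \eqref{seq1} to express $f$ in terms of $\phi$, and then derive the constraints on $f_\infty$ by examining the resulting representation together with the Poisson equation \eqref{seq2}. First I would note that along the characteristic curves of \eqref{seq1}, the quantity $E(x,\xi):=\tfrac12\xi_1^2-\phi(x)$ is conserved, since $\tfrac{d}{dx}\big(\tfrac12\xi_1^2-\phi(x)\big)=\xi_1\partial_x\xi_1-\partial_x\phi=0$ on trajectories where $\dot x=\xi_1$, $\dot\xi_1=\partial_x\phi$. Hence $f$ is constant along these curves, and $f(x,\xi)$ depends on $x$ only through $\phi(x)$ (and on $\xi'$ trivially, and on $\xi_1$ through the conserved energy). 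Since $f_b=\alpha=0$, there is no incoming mass from the boundary, so a particle present at $(x,\xi)$ with $\xi_1<0$ must have come from $x=+\infty$; tracing back, its velocity at infinity is $-\sqrt{\xi_1^2-2\phi(x)}$, which requires $\xi_1^2-2\phi(x)>0$. A particle with $\xi_1>0$ at $x>0$ would have to originate either from the boundary (impossible, $f_b=0$) or from $x=\infty$ with a positive first-component velocity there; for the sheath to close up at infinity with $\phi\to0$ monotonically (so $\phi(x)>0$ on $\mathbb R_+$ in the attractive case), such a particle cannot exist unless $f_\infty(\xi)=0$ for $\xi_1>0$, which is exactly \eqref{need2}. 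This yields the formula \eqref{fform}: $f(x,\xi)=f_\infty(-\sqrt{\xi_1^2-2\phi(x)},\xi')\chi(\xi_1^2-2\phi(x))\chi(-\xi_1)$.

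Next I would compute the ion density $\int_{\mathbb R^3}f(x,\xi)\,d\xi$ by substituting \eqref{fform} and changing variables $\xi_1\mapsto\eta_1:=-\sqrt{\xi_1^2-2\phi(x)}$ (so $\xi_1=-\sqrt{\eta_1^2+2\phi}$, $d\xi_1 = \tfrac{-\eta_1}{\sqrt{\eta_1^2+2\phi}}d\eta_1$ on the relevant range $\eta_1<0$), obtaining $\int_{\mathbb R^3}f\,d\xi=\rho_i(\phi(x))$ with $\rho_i$ as in \eqref{V}. Then the Poisson equation \eqref{seq2} becomes the autonomous second-order ODE $\partial_{xx}\phi=\rho_i(\phi)-n_e(\phi)$. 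Multiplying by $\partial_x\phi$ and integrating from $x$ to $\infty$, using $\phi(\infty)=0$ and $\partial_x\phi(\infty)=0$ (which follows from the decay forced by \eqref{sbc4} and the equation), gives the energy identity $\tfrac12(\partial_x\phi)^2=V(\phi(x))$, where $V$ is the potential in \eqref{V}. I would call the resulting first-order equation \eqref{phieq2} (which the author presumably introduces as $\tfrac12(\partial_x\phi)^2=V(\phi)$, or in the reduced form $\partial_x\phi=-\sqrt{2V(\phi)}$ given monotonicity); since $\phi$ is real and $\phi(x)$ ranges over $(0,\phi_b]$, we need $V(\phi)\ge0$ there, so in particular $V(\phi)\ge0$ for small $\phi>0$.

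Finally, to extract the kinetic Bohm criterion I would Taylor-expand $V$ near $\phi=0$. We have $V(0)=0$ and $V'(\phi)=\rho_i(\phi)-n_e(\phi)$, with $\rho_i(0)=\int_{\mathbb R^3}f_\infty\,d\xi=1$ by \eqref{netrual1} and $n_e(0)=1$, so $V'(0)=0$. Differentiating once more, $V''(0)=\rho_i'(0)-n_e'(0)=\rho_i'(0)+1$, and $\rho_i'(\phi)=\int_{\mathbb R^3}f_\infty(\xi)\cdot\partial_\phi\big(\tfrac{-\xi_1}{\sqrt{\xi_1^2+2\phi}}\big)d\xi=\int_{\mathbb R^3}f_\infty(\xi)\tfrac{\xi_1}{(\xi_1^2+2\phi)^{3/2}}d\xi$; at $\phi=0$, using \eqref{need2} so that only $\xi_1<0$ contributes and $f_\infty\ge0$, this equals $-\int_{\mathbb R^3}f_\infty(\xi)|\xi_1|^{-2}\,d\xi\le0$. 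Hence $V''(0)=1-\int_{\mathbb R^3}\xi_1^{-2}f_\infty\,d\xi$. Since $V(0)=V'(0)=0$ and $V\ge0$ on a right-neighborhood of $0$, we must have $V''(0)\ge0$, i.e. $\int_{\mathbb R^3}\xi_1^{-2}f_\infty\,d\xi\le1$, which is precisely \eqref{Bohm1}. \textbf{The main obstacle} I anticipate is handling the regularity/integrability carefully enough to justify differentiating under the integral sign in $\rho_i$ near $\phi=0$ (the integrand $\xi_1/(\xi_1^2+2\phi)^{3/2}$ has a nonintegrable-looking singularity as $\phi\to0$ near $\xi_1=0$), and relatedly to justify that the formal limit $\lim_{\phi\to0^+}\rho_i'(\phi)=-\int\xi_1^{-2}f_\infty\,d\xi$ is valid even when the right side is $+\infty$ — the clean way is to avoid differentiating and instead argue directly that $V(\phi)/\phi^2\to\tfrac12 V''(0)$ via a Fatou/monotone-convergence argument on $\big(1-\rho_i(\phi)-(n_e(\phi)-1)\big)/\phi$, so that $V(\phi)\ge0$ near $0$ forces the inequality; this also seamlessly covers the case $\int\xi_1^{-2}f_\infty\,d\xi=\infty$ needed for Assertion (iv). A secondary point is verifying $\partial_x\phi(\infty)=0$ rigorously from the boundary data and monotonicity, but this is routine once the autonomous ODE structure is in place.
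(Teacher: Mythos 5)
Your proposal is correct and follows essentially the same route as the paper: characteristics along $\tfrac12\xi_1^2-\phi(x)=c$ giving \eqref{fform} and \eqref{need2}, the change of variables reducing the Poisson equation to $\partial_{xx}\phi=\rho_i(\phi)-n_e(\phi)$ and the energy identity $(\partial_x\phi)^2=2V(\phi)$, and then the Taylor expansion of $V$ at $0$ forcing $\frac{d^2V}{d\phi^2}(0)\geq 0$. The technical obstacle you flag (the case $\int\xi_1^{-2}f_\infty\,d\xi=\infty$) is exactly the one the paper addresses, which it does by showing $\frac{d^2V}{d\phi^2}(\phi)<-C_0$ on a right-neighborhood of $0$ so that $V<0$ there, contradicting $(\partial_x\phi)^2=2V(\phi)$ --- essentially equivalent to your proposed monotone-convergence argument on $V(\phi)/\phi^2$.
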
 

\begin{proof}
First $\D_{x} \phi(x) <0$ holds thanks to $\phi_{b}>0$ and the condition (ii) in Definition \ref{DefS1}. 
Owing to \eqref{sbc4}, \eqref{weak2}, 
$f \in C(\overline{\mathbb R_{+}}; L^{1}(\mathbb R^{3}))$, and $\phi \in C(\overline{\mathbb R_{+}})$, 
it follows from \eqref{seq2} that $\D_{xx} \phi$ is bounded and therefore $\D_{x} \phi$ is uniformly continuous on $\overline{\mathbb R_{+}}$.
Then the fact together with \eqref{sbc4} and $\D_{x} \phi(x)<0$ leads to a necessary condition
\begin{gather}\label{lim1}
\lim_{x \to \infty}\D_{x}\phi(x)=0.
\end{gather}

Let us first show that $f_{\infty}$ satisfies \eqref{need2} and $f$ is written as \eqref{fform}.
Regarding $\phi$ as a given function and then applying the characteristics method to \eqref{weak1}, 
we see that the value of $f$ must remain a constant along the following characteristic curves:
\begin{gather*}
\frac{1}{2}\xi_{1}^{2}-\phi(x)=c,
\end{gather*}
where $c$ is some constant. We draw the illustration of characteristics in Figure \ref{fig1} below.
\begin{figure}[htbp]
\begin{center}
   \includegraphics[width=0.5\textwidth]{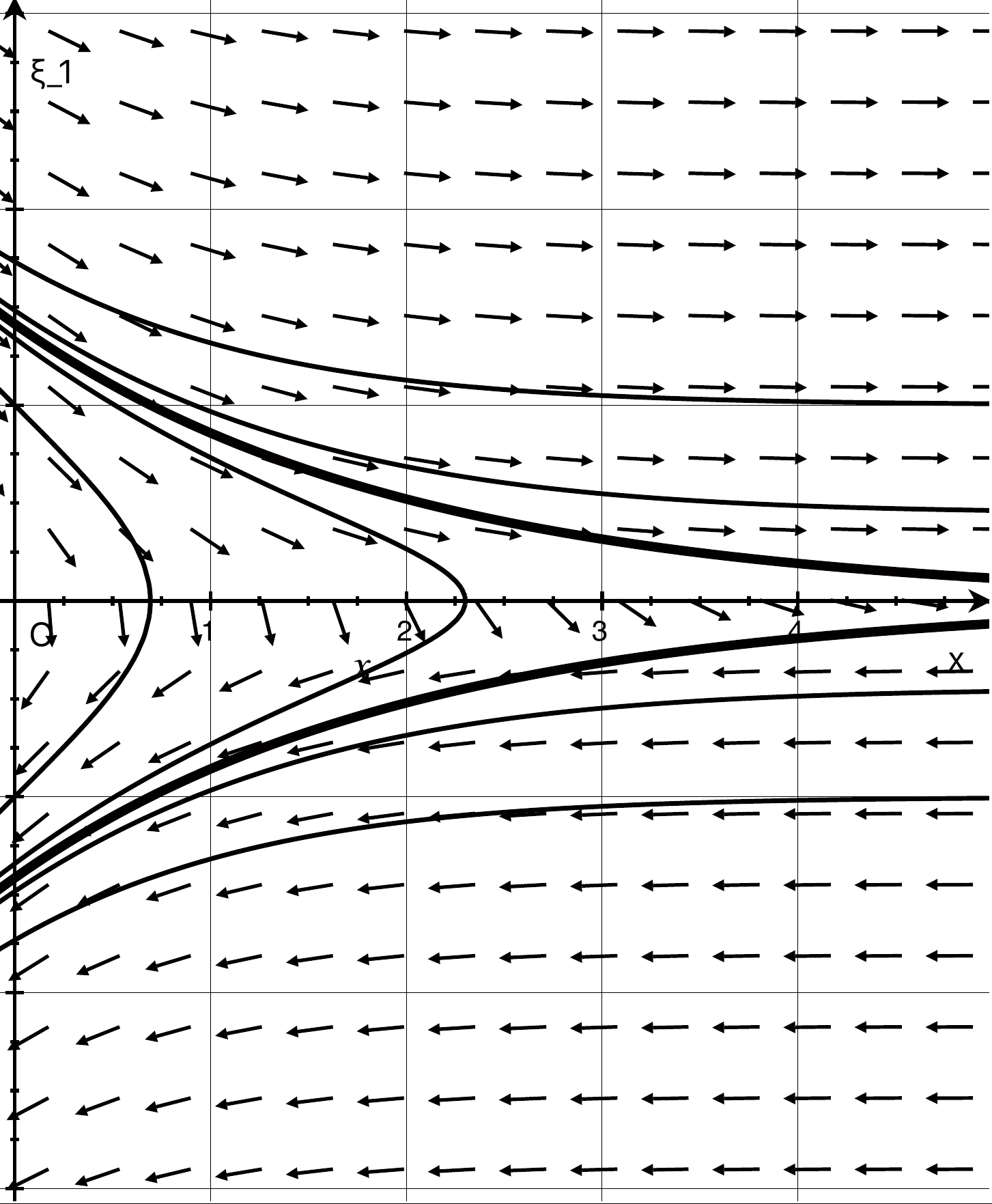}
\end{center}
  \caption{characteristic curves for the case $\partial_{x} \phi <0$}  
  \label{fig1}
\end{figure}
It tells us that
\begin{subequations}\label{chara0}
\begin{align}
f(y,-\sqrt{\eta_{1}^{2}+2\phi(y)},\eta')=f_{\infty}(\eta), &\quad (y,\eta_{1},\eta') \in X_{1},
\label{chara1}
 \\
f(y,\pm\sqrt{2\phi(y)-\eta_{1}^{2}},\eta')=0, &\quad (y,\eta_{1},\eta') \in X_{2},
\label{chara2}
 \\
f(y,\sqrt{\eta_{1}^{2}+2\phi(y)},\eta')=f_{\infty}(\eta)=0, &\quad  (y,\eta_{1},\eta') \in X_{3},
\label{chara3}
\end{align}
\end{subequations}
where $\eta=(\eta_{1},\eta_{2},\eta_{3})=(\eta_{1},\eta')$ and
\[
X_{1}:=\overline{\R_+}\times\R_-^3, \quad
X_{2}:=\{(y,\eta_1,\eta')\in\overline{\R_+}\times\R^3\;|\;\eta_1^2<2{\phi}(y)\leq 2\phi_b\}, \quad
X_{3}:=\overline{\R_+}\times\R_+^3.
\]
The last inequality \eqref{chara3} means that \eqref{need2} must hold. 
Furthermore, we conclude from \eqref{chara0} that $f$ must be written by \eqref{fform}, i.e.
\begin{gather*}
f(x,\xi)=f_{\infty}(-\sqrt{\xi_{1}^{2}-2\phi(x)},\xi')\chi(\xi_{1}^{2}-2\phi(x))\chi(-\xi_{1}),
\end{gather*}
where $\chi(s)$ is the one-dimensional indicator function of the set $\{s>0\}$.

Next we show that $\phi$ satisfies \eqref{phieq2} below.
Integrating $\eqref{fform}$ over $\mathbb R^{3}$, 
and using \eqref{need2} and the change of variable $\sqrt{\xi_{1}^{2}-2\phi}=-\zeta_{1}$, 
we see that
\begin{gather}
\int_{\mathbb R^{3}} f(x,\xi) \,d\xi = \int_{\mathbb R^{3}} f_{\infty}(\xi)\frac{-\xi_{1}}{\sqrt{\xi_{1}^{2}+2\phi(x)}} d\xi = \rho_{i}(\phi(x)),
\label{rho0'}
\end{gather}
where $\rho_{i}$ is the same function defined in \eqref{V}.
Substituting \eqref{rho0'} into \er{seq2} yields an ordinary differential equation for $\phi$:
\begin{equation}\lb{phieq1} 
\partial_{xx} \phi 
= \rho_{i}(\phi) 
- n_{e}(\phi), \quad x>0.
\end{equation}
Multiply \er{phieq1} by $\D_{x}\phi$, integrate it over $(x,\infty)$, and use \eqref{sbc4} and \eqref{lim1} to obtain
\begin{equation}\lb{phieq2}
(\D_{x} \phi)^2=2V(\phi), \quad V(\phi)=\int_{0}^{\phi} \left(\rho_{i}(\varphi) - n_{e}(\varphi) \right)  d\vphi,
\end{equation}
where $V$ is the same function defined in \eqref{V}.
Thus $\phi$ must satisfy \eqref{phieq2}.

To obtain the kinetic Bohm criterion \eqref{Bohm1},
we divide the proof into two cases $\int_{\mathbb R^{3}}\xi_{1}^{-2}f_{\infty}(\xi) d\xi$ $<\infty$ and
$\int_{\mathbb R^{3}}\xi_{1}^{-2}f_{\infty}(\xi) d\xi=\infty$.
For the former case, $V \in C^{2}([0,\phi_{b}])$ follows from the monotone convergence theorem
and the fact $f_{\infty},\xi_{1}^{-2}f_{\infty} \in L^{1}(\mathbb R^{3})$.
We also see from \eqref{netrual1} and \eqref{need2} that
\begin{equation}\lb{V2}
V(0)=\frac{d V}{d \phi}(0)=0, \quad
\frac{d^{2} V}{d \phi^{2}}(0) 
=-\int_{\mathbb R^{3}}\xi_{1}^{-2}f_{\infty}(\xi) d\xi+1.
\end{equation}
Hence, we arrive at the criterion \eqref{Bohm1} from  
\eqref{phieq2} and \eqref{V2} with the aid of the Taylor theorem.
Furthermore, we show that the other case does not occur.
Suppose that $\int_{\mathbb R^{3}}\xi_{1}^{-2}f_{\infty}(\xi) d\xi=\infty$ holds.
Then there hold that $V \in C^{2}((0,\phi_{b}])$, $V(0)=\frac{d V}{d \phi}(0)=0$, and
\begin{gather*}
\frac{d^{2} V}{d \phi^{2}}(\phi) <-C_{0} \quad \text{for $\phi \in (0,c_{0}]$},
\end{gather*}
where $c_{0}$ and $C_{0}$ are some positive constants.
These facts imply that $V(\phi)<0$ holds for $\phi \in (0,c_{0}]$. 
It contradicts to \eqref{phieq2}, and thus $\int_{\mathbb R^{3}}\xi_{1}^{-2}f_{\infty}(\xi) d\xi<\infty$ holds.
Consequently, \eqref{Bohm1} must hold.
\end{proof}

This proof provides a simpler derivation of the kinetic Bohm criterion \eqref{Bohm1}.

Next we show Assertion (iii).

\begin{proof}[Proof of Assertion (iii) in Theorem \ref{existence1}]
In the case $B\neq \emptyset$, either one of the following holds:
\begin{enumerate}[(a)]
\item $V(\phi)<0$ around $\phi=0$.  
\item There exists a sequence $\{\phi_{n}\}_{n=1}^{\infty}$ such that $V(\phi_{n})=0$ and $\lim_{n\to \infty} \phi_{n}=0$.
\end{enumerate}
On the other hand, it is clear from \eqref{phieq2} that the case (a) does not occur.  
It suffices to consider the case (b). 
Suppose that a solution $(f,\phi)$ of the problem \eqref{sVP1} exists. 
For sufficiently large $n$, there exists a sequence $\{x_{n}\}_{n=1}^{\infty}$ such that $\phi(x_{n})=\phi_{n}$ and $\lim_{n\to \infty} x_{n}=\infty$.
Then we see from \eqref{phieq2} that $(\partial_{x} \phi)^{2}(x_{n})=2V(\phi(x_{n}))=2V(\phi_{n})=0$.
This fact contradicts to $\D_{x} \phi (x)<0$ in Definition \ref{DefS1}.
Therefore, the problem \eqref{sVP1} admits no solution.
\end{proof}

\subsection{Solvability}\label{S3.2}
We prove Assertions (i) and (ii) in Theorem \ref{existence1} on the solvability of the problem \eqref{sVP1}.

\begin{proof}[Proof of Assertions (i) and (ii) in Theorem \ref{existence1}]
We first note that \eqref{V2} holds owing to $\xi_{1}^{-2}f_{\infty} \in L^{1}(\mathbb R^{3})$. 
It is seen that $B \neq \emptyset $ if \eqref{Bohm2} holds,
since the Taylor theorem with \eqref{Bohm2} and \eqref{V2} ensures that $V$ is positive around $\phi=0$.
It is easy to see from \eqref{phieq2} and $\D_{x} \phi <0$ in Definition \ref{DefS1} 
that the condition $\phi_{b} < \sup B$ is necessary for the solvability stated in Assertions (i) and (ii).


Hereafter we discuss simultaneously Assertions (i) and (ii).
Suppose that $\phi_{b} < \sup B$ holds, which implies that $V(\phi)>0$ for $\phi \in (0,\phi_{b}]$.
Let us construct $\phi$ with $\D_{x} \phi<0$ 
by solving \eqref{phieq2} with \eqref{sbc3} and \eqref{sbc4}.
To this end, we rewrite \eqref{phieq2} into the equivalent equation
\begin{gather}\label{phieq3}
\D_{x}{\phi}=-\sqrt{2V({\phi})}.
\end{gather}
The Taylor theorem together with \eqref{V2} and the assumption $\int_{\mathbb R^{3}}\xi_{1}^{-2}f_{\infty}(\xi) d\xi \leq 1$ ensures that 
$\sqrt{V}$ is Lipschitz continuous around $\phi=0$.
Combining this and the fact that $V(\phi)>0$ for $\phi \in (0,\phi_{b}]$,
we deduce that $\sqrt{V}$ is Lipschitz continuous on $[0,\phi_{b}]$.
Therefore, a standard theory of ordinary differential equations gives
the unique solvability of \eqref{phieq3} with \eqref{sbc3} and \eqref{sbc4}.
Then it is straightforward  to see $\D_{x} \phi<0$ and 
$\phi \in C(\overline{\mathbb R_{+}}) \cap C^{2}(\mathbb R_{+})$ 
from the equivalent equation.
Thus we have the desired $\phi$ satisfying \eqref{phieq2} with \eqref{sbc3} and \eqref{sbc4}.
Furthermore, it is easy to show by differentiating \eqref{phieq2} and using $\D_{x} \phi<0$
that $\phi$ satisfies \eqref{phieq1}.

Now we define $f$ as \eqref{fform} by using $\phi$, and prove that $f$ satisfies the conditions (i)--(iii) in Definition \ref{DefS1}.
Owing to \eqref{need2} and $f \in L^{1}(\mathbb R^{3})$, there exists a sequence $\{f_{\infty}^{k}\} \subset C^{\infty}_{0}(\mathbb R^{3})$ 
such that $f_{\infty}^{k}(\xi)=0$ for $\xi_{1}>-1/k$, and $f_{\infty}^{k} \to f_{\infty}$ in $L^{1}(\mathbb R^{3})$ as $k \to \infty$. 

Let us show the condition (i), i.e. $f \in C(\overline{\mathbb R_{+}};L^{1}(\mathbb R^{3}))\cap L^{1}_{loc}(\mathbb R_{+}\times\mathbb R^{3})$. 
First $f(x,\cdot) \in L^{1}(\mathbb R^{3})$ follows from \eqref{rho0} and \eqref{rho0'}.
To investigate the continuity of $f$,
we set $G(x,\xi)=\chi(\xi_{1}^{2}-2\phi(x))\chi(-\xi_{1})$ and observe that for $x,x_{0} \in \overline{\mathbb R_{+}}$,
\begin{align*}
&\|f(x)-f(x_{0})\|_{L^{1}(\mathbb R^{3})}
\\
&=\int_{\mathbb R^{3}}\left| f_{\infty}(-\sqrt{\xi_{1}^{2}-2\phi(x)},\xi')G(x,\xi) -f_{\infty}(-\sqrt{\xi_{1}^{2}-2\phi(x_{0})},\xi')G(x_{0},\xi) \right| d\xi
\\
&\leq \int_{\mathbb R^{3}}\left| f_{\infty}(-\sqrt{\xi_{1}^{2}-2\phi(x)},\xi')G(x,\xi) -f_{\infty}^{k}(-\sqrt{\xi_{1}^{2}-2\phi(x)},\xi')G(x,\xi) \right| d\xi
\\
&\quad +\int_{\mathbb R^{3}}\left| f_{\infty}^{k}(-\sqrt{\xi_{1}^{2}-2\phi(x)},\xi')G(x,\xi) -f_{\infty}^{k}(-\sqrt{\xi_{1}^{2}-2\phi(x_{0})},\xi')G(x_{0},\xi) \right| d\xi
\\
&\quad +\int_{\mathbb R^{3}}\left| f_{\infty}^{k}(-\sqrt{\xi_{1}^{2}-2\phi(x_{0})},\xi')G(x_{0},\xi) -f_{\infty}(-\sqrt{\xi_{1}^{2}-2\phi(x_{0})},\xi') G(x_{0},\xi)\right| d\xi
\\
&=:K_{1}+K_{2}+K_{3}.
\end{align*}
Using the change of variable $\zeta_{1}=-\sqrt{\xi_{1}^{2}-2{\phi}(x)}$ and the fact that $f_{\infty}(\xi)=f_{\infty}^{k}(\xi)=0$ holds for $\xi_{1}>0$, 
we can estimate $K_{1}$ as
\begin{align*}
K_{1} = \int_{\mathbb R^{3}}
| f_{\infty}(\xi) -f_{\infty}^{k}(\xi) |  \frac{|\xi_{1}|}{\sqrt{\xi^{2}_{1}+2{\phi}(x)}}\,d\xi
\leq \|f_{\infty} - f_{\infty}^{k}\|_{L^{1}(\mathbb R^{3})}   \to 0 \quad \text{as $k\to \infty$},
\end{align*}
where we have also used the fact $\phi(x)>0$ in deriving the inequality.
Similarly, $K_{3} \to 0$ as $k \to \infty$. 
Thus $K_{1}$ and $K_{3}$ can be arbitrarily small for suitably large $k$. 
For the fixed $k$, the dominated convergence theorem ensures that $K_{2}$ converges to zero as $x \to x_{0}$.
Hence, we deduce $f \in C(\overline{\mathbb R_{+}};L^{1}(\mathbb R^{3}))$.
Now it is straightforward to show $f \in L^{1}_{loc}(\mathbb R_{+}\times\mathbb R^{3})$.

It is clear that  the condition (ii), i.e. $f \geq 0$, holds.
Let us prove \eqref{weak1} and \eqref{weak2} in the condition (iii). 
Obviously, \eqref{weak2} follows from the same manner as above.
It is also evident that the function $f^{k}$ defined by replacing $f_{\infty}$ by  $f_{\infty}^{k}$ in \eqref{fform} belongs to $C^{\infty}_{0}(\mathbb R^{3})$, and satisfies the weak form \eqref{weak1} for each $k$.
Using  the same change of variable as above and letting $k \to \infty$, 
we see that $f$ also satisfies \eqref{weak1}.
Consequently, all the conditions (i)--(iii) hold.

The condition (iv) is validated by \eqref{fform}, \eqref{rho0'}, and \eqref{phieq1}.
The uniqueness of a solution $(f,\phi)$ follows from 
the uniqueness of the solution of \eqref{phieq3} with \eqref{sbc3} and \eqref{sbc4}.
Using \eqref{need2}, \eqref{fform}, and the fact $\phi \in C(\overline{\mathbb R_{+}}) \cap C^{2}(\mathbb R_{+})$, 
we can show that $f \in C^{1}(\overline{\mathbb R_{+}}\times \mathbb R^{3})$
if $f_{\infty} \in C^{2}(\mathbb R^{3})$. 

The decay estimate \eqref{decay1} follows from \eqref{Bohm2}, \eqref{V2}, and \eqref{phieq3}. The proof is complete.
\end{proof}

For the proof of the delta mass limit as $\ve \to 0$, 
we need to show that  $\sup B$ in Assertion (i) of Theorem \ref{existence1} is bounded from below 
by some positive constant $\delta_{0}$ independent of $\ve$
for the problem \eqref{sVP1} with $f_{\infty}=f_{\infty,\ve}$.
Here $f_{\infty,\ve}$ is defined in \eqref{fve}.

\begin{cor}\label{4.1}
Let $f_{b}=\alpha=0$ and $u_{\infty}>1$.
There exists a positive constant $\delta_{0}$ such that 
if $0<\phi_{b}<\delta_{0}$, then the problem \eqref{sVP1} with $f_{\infty}=f_{\infty,\ve}$
has a unique solution $(f_{\ve},\phi_{\ve}) \in C^{1}(\overline{\mathbb R_{+}}\times \mathbb R^{3}) \times C^{2}(\overline{\mathbb R_{+}})$ for any $\ve \in (0,\ve_{0})$,
where $\ve_{0} := (u_{\infty}-1)/2$.
\end{cor}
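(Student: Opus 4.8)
\textbf{Proof proposal for Corollary \ref{4.1}.}
The plan is to invoke Assertion (i) of Theorem \ref{existence1} for each $\ve\in(0,\ve_{0})$ and to show that the threshold $\sup B$ (which a priori depends on $\ve$ through $f_{\infty}=f_{\infty,\ve}$) is bounded below by a positive constant $\delta_{0}$ that does not depend on $\ve$. First I would check the hypotheses of Theorem \ref{existence1}: by \eqref{vphi0} we have $f_{\infty,\ve}\in C_{0}^{\infty}(\mathbb R^{3})$, $f_{\infty,\ve}\ge 0$, $\int_{\mathbb R^{3}}f_{\infty,\ve}\,d\xi=1$, so \eqref{netrual1} holds; and since $\supp\vphi\subset\{|\xi|<1\}$ and $\ve<\ve_{0}=(u_{\infty}-1)/2$, the support of $f_{\infty,\ve}$ lies in $\{|\xi-(-u_{\infty},0,0)|<\ve\}\subset\{\xi_{1}<-u_{\infty}+\ve_{0}<0\}$, so $f_{\infty,\ve}(\xi)=0$ for $\xi_{1}>0$, giving \eqref{need2}. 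Finally I would verify \eqref{Bohm2}: on the support $|\xi_{1}+u_{\infty}|<\ve_{0}$, hence $|\xi_{1}|>u_{\infty}-\ve_{0}=(u_{\infty}+1)/2>1$, so $\int_{\mathbb R^{3}}\xi_{1}^{-2}f_{\infty,\ve}\,d\xi<4/(u_{\infty}+1)^{2}<1$. Thus Theorem \ref{existence1}(i) applies and for each $\ve$ there is a unique solution provided $\phi_{b}<\sup B$; the regularity $(f_{\ve},\phi_{\ve})\in C^{1}\times C^{2}$ then follows from the last sentence of Assertion (i) since $f_{\infty,\ve}\in C^{\infty}_{0}\subset C^{2}$.

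The substantive point is the $\ve$-independent lower bound on $\sup B$. Writing $V_{\ve}$ and $\rho_{i,\ve}$ for the quantities in \eqref{V} with $f_{\infty}=f_{\infty,\ve}$, I would obtain a uniform estimate of the form $\rho_{i,\ve}(\phi)\le n_{e}(\phi)-c\phi$ for $\phi\in[0,\delta_{0}]$ with $c>0$ independent of $\ve$, which forces $V_{\ve}(\phi)>0$ on $(0,\delta_{0}]$ and hence $\delta_{0}\le\sup B$. Here $\rho_{i,\ve}(\phi)=\int_{\mathbb R^{3}}f_{\infty,\ve}(\xi)\,(-\xi_{1})/\sqrt{\xi_{1}^{2}+2\phi}\,d\xi$; on the support, $-\xi_{1}\in(u_{\infty}-\ve_{0},u_{\infty}+\ve_{0})$, so uniformly in $\ve$ one has, for any fixed small $\phi\ge 0$,
\begin{equation*}
\rho_{i,\ve}(\phi)\le \frac{u_{\infty}+\ve_{0}}{\sqrt{(u_{\infty}-\ve_{0})^{2}+2\phi}}
= 1+\Big(\frac{\ve_{0}}{u_{\infty}}+O(\ve_{0}^{2})\Big) - \frac{\phi}{u_{\infty}^{2}}+O(\phi^{2}),
\end{equation*}
and comparing with $n_{e}(\phi)=1-\phi+O(\phi^{2})$ (recall $n_{e}(0)=1$, $n_{e}'(0)=-1$) shows $\rho_{i,\ve}(\phi)-n_{e}(\phi)\le -(1-u_{\infty}^{-2})\phi + (\text{const})\ve_{0}+O(\phi^{2})$; since $u_{\infty}>1$ the leading coefficient $1-u_{\infty}^{-2}$ is strictly positive. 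One must be a little careful that the $O(\ve_{0})$ term has the wrong sign, so a cleaner route is: use the bound $|\xi_{1}|\ge(u_{\infty}+1)/2$ on the support to get $\int\xi_{1}^{-2}f_{\infty,\ve}\le 4/(u_{\infty}+1)^{2}=:\theta<1$ uniformly, then apply the identity \eqref{V2}, which gives $V_{\ve}(0)=V_{\ve}'(0)=0$ and $V_{\ve}''(0)=1-\int\xi_{1}^{-2}f_{\infty,\ve}\,d\xi\ge 1-\theta>0$ for every $\ve$; but since $V_{\ve}''$ need not be uniformly continuous in $\ve$ near $0$, I would instead integrate directly: $V_{\ve}(\phi)=\int_{0}^{\phi}(\rho_{i,\ve}(\varphi)-n_{e}(\varphi))\,d\varphi$ and bound $\rho_{i,\ve}(\varphi)-n_{e}(\varphi)$ from above by an $\ve$-independent function that is $\le 0$ on a fixed interval $[0,\delta_{0}]$.

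Carrying out the last estimate: on $\{|\xi+(u_{\infty},0,0)|<\ve_{0}\}$ we have $-\xi_{1}\le u_{\infty}+\ve_{0}$ and $\xi_{1}^{2}+2\varphi\ge(u_{\infty}-\ve_{0})^{2}+2\varphi$, hence $\rho_{i,\ve}(\varphi)\le g(\varphi):=(u_{\infty}+\ve_{0})\big((u_{\infty}-\ve_{0})^{2}+2\varphi\big)^{-1/2}$, a function independent of $\ve$. Since $u_{\infty}>1$ and $\ve_{0}=(u_{\infty}-1)/2$, one checks $g(0)=(u_{\infty}+\ve_{0})/(u_{\infty}-\ve_{0})=(3u_{\infty}-1)/(u_{\infty}+1)$; for the argument to close I want $g(0)<1$, which fails, so the honest fix is to keep track of the $\xi_{1}^{-2}$-moment bound rather than the crude pointwise bound on $-\xi_{1}/\sqrt{\cdot}$. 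Concretely, write $-\xi_{1}/\sqrt{\xi_{1}^{2}+2\varphi}=\xi_{1}^{-2}\cdot(-\xi_{1}^{3})/\sqrt{\xi_{1}^{2}+2\varphi}\le\xi_{1}^{-2}|\xi_{1}|=1/|\xi_{1}|$ and more usefully $-\xi_{1}/\sqrt{\xi_{1}^{2}+2\varphi}\le(-\xi_{1})(\xi_{1}^{2})^{-1/2}(1-\varphi\,\xi_{1}^{-2}+\cdots)$; integrating and using $\int f_{\infty,\ve}=1$, $\int\xi_{1}^{-2}f_{\infty,\ve}\le\theta<1$ yields $\rho_{i,\ve}(\varphi)\le 1-\theta'\varphi$ on $[0,\delta_{0}]$ for $\delta_{0}$ small and $\theta'>0$ depending only on $u_{\infty}$ (not on $\ve$), while $n_{e}(\varphi)\ge 1-\varphi-C\varphi^{2}\ge 1-2\varphi$ on $[0,\delta_{0}]$. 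Choosing $\delta_{0}$ small enough that $\theta'>2\delta_{0}\cdot(\text{const})$, one gets $\rho_{i,\ve}(\varphi)-n_{e}(\varphi)<0$ on $(0,\delta_{0}]$, hence $V_{\ve}(\phi)>0$ there, so $\delta_{0}\le\sup B$ for all $\ve\in(0,\ve_{0})$. Then Theorem \ref{existence1}(i) gives the unique solution whenever $0<\phi_{b}<\delta_{0}$, and $f_{\infty,\ve}\in C^{2}$ gives the claimed regularity. The main obstacle is precisely pinning down this uniform-in-$\ve$ sign of $\rho_{i,\ve}-n_{e}$ near $\phi=0$; everything else is a direct citation of Theorem \ref{existence1}.
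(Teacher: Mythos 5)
Your first paragraph (verification of \eqref{netrual1}, \eqref{need2}, \eqref{Bohm2}, and the uniform moment bound $\int_{\mathbb R^{3}}\xi_{1}^{-2}f_{\infty,\ve}\,d\xi\leq 4/(u_{\infty}+1)^{2}=:\theta<1$ from $|\xi_{1}|\geq(u_{\infty}+1)/2$ on $\supp f_{\infty,\ve}$) is correct and is exactly the reduction the paper makes: everything hinges on an $\ve$-independent $\delta_{0}$ with $V_{\ve}>0$ on $(0,\delta_{0}]$. But the closing estimate has a fatal sign error, announced already in the sentence ``I would obtain $\rho_{i,\ve}(\phi)\le n_{e}(\phi)-c\phi$ \dots which forces $V_{\ve}(\phi)>0$.'' Since $V_{\ve}(\phi)=\int_{0}^{\phi}(\rho_{i,\ve}(\varphi)-n_{e}(\varphi))\,d\varphi$, an integrand that is $\leq-c\varphi<0$ forces $V_{\ve}<0$, i.e.\ $B=\emptyset$ near $0$ --- the opposite of what you need. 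The same inversion recurs at the end: you derive an \emph{upper} bound $\rho_{i,\ve}(\varphi)\leq 1-\theta'\varphi$ and a \emph{lower} bound $n_{e}(\varphi)\geq 1-2\varphi$, which together give only $\rho_{i,\ve}-n_{e}\leq(2-\theta')\varphi$, a positive upper bound (and with $\theta'<1$ it cannot be made negative); and even if $\rho_{i,\ve}-n_{e}<0$ did hold it would contradict, not imply, $V_{\ve}>0$. The intermediate expansion also has an arithmetic slip: $-\phi/u_{\infty}^{2}-(-\phi)=+(1-u_{\infty}^{-2})\phi$, not $-(1-u_{\infty}^{-2})\phi$.

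The correct direction is the reverse of everything in your last paragraph: you need a \emph{lower} bound on $\rho_{i,\ve}$ and an \emph{upper} bound on $n_{e}$. Since $(1+t)^{-1/2}\geq 1-t/2$ for $t\geq 0$, one has $\rho_{i,\ve}(\varphi)=\int f_{\infty,\ve}(\xi)\,(1+2\varphi\xi_{1}^{-2})^{-1/2}d\xi\geq 1-\varphi\int\xi_{1}^{-2}f_{\infty,\ve}\,d\xi\geq 1-\theta\varphi$, while $n_{e}(\varphi)\leq 1-\varphi+C\varphi^{2}$; hence $\rho_{i,\ve}(\varphi)-n_{e}(\varphi)\geq(1-\theta)\varphi-C\varphi^{2}>0$ on $(0,\delta_{0}]$ for $\delta_{0}<(1-\theta)/C$, which gives $V_{\ve}>0$ there and $\delta_{0}\leq\sup B$ uniformly in $\ve$. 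This is, in substance, what the paper does, phrased through derivatives of $V_{\ve}$: it bounds $|d^{3}V_{\ve}/d\phi^{3}|\leq C_{0}$ uniformly on $(0,1]$ (possible precisely because $\int|\xi_{1}|^{-4}f_{\infty,\ve}\,d\xi$ is uniformly bounded on the $\ve$-supports), notes $d^{2}V_{\ve}/d\phi^{2}(0)\geq 1-\theta>0$, and applies Taylor's theorem to get $d^{2}V_{\ve}/d\phi^{2}\geq 1-\theta-C_{0}\delta_{0}>0$ on $[0,\delta_{0}]$. So the uniform-continuity worry you raise about $V_{\ve}''$ is not an obstruction --- the uniform third-derivative bound resolves it --- and your proof becomes correct once all the inequality directions in the final step are flipped.
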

\begin{proof}
We first note that $f_{\infty,\ve} \in C_{0}^{\infty}(\mathbb R^{3})$ satisfies
the conditions \eqref{netrual1}, \eqref{need2}, and \eqref{Bohm2} 
being in Theorem \ref{existence1} for any $\ve \in (0,\ve_{0})$.
It suffices to show that there exists $\delta_{0}>0$ independent of $\ve$ such that
\begin{gather}\label{positive1}
V_{\ve}(\phi)>0, \quad \phi \in (0,\delta_{0}],
\end{gather}
where $V_{\ve}$ denotes the function replaced $f_{\infty}$ by $f_{\infty,\ve}$ in \eqref{V}.
Indeed, repeating the proof of Assertions (i) and (ii) of Theorem \ref{existence1} with the aid of \eqref{positive1}, 
we can conclude that the problem \eqref{sVP1} with $f_{\infty}=f_{\infty,\ve}$
has a unique solution $(f_{\ve},\phi_{\ve}) \in C^{1}(\overline{\mathbb R_{+}}\times \mathbb R^{3}) \times C^{2}(\overline{\mathbb R_{+}})$ for $\phi_{b} \in (0,\delta_{0})$ and
$\ve \in (0,\ve_{0})$. 

Let us complete the proof by showing \eqref{positive1}. 
Using $\int_{\mathbb R^{3}} f_{\infty,\ve}(\xi) d\xi=1$, we observe that 
\begin{align*}
\left|\frac{d^{3} V_{\ve}}{d\phi^{3}}(\phi)\right|
&= \left|3\int_{\mathbb R^{3}} f_{\infty,\ve}(\xi)\frac{-\xi_{1}}{(\xi_{1}^{2}+2\phi)^{5/2}} d\xi - n_{e}''(\phi) \right|
\leq 3\int_{\mathbb R^{3}} f_{\infty,\ve}(\xi)|\xi_{1}|^{-4}d\xi + C_{0}
\\
&\leq 3(\sup_{|\xi_{1}+u_{\infty}| \leq \ve_{0}} |\xi_{1}|^{-4}) \int_{\mathbb R^{3}} f_{\infty,\ve}(\xi) d\xi +C_{0}
\leq C_{0} 
\end{align*}
for any $\phi \in (0,1]$, where $C_{0}$ is independent of $\ve$. 
Recalling \eqref{V2}, we also see that
\begin{gather*}
\frac{d^{2} V_{\ve}}{d\phi^{2}}(0)
=-\int_{\mathbb R^{3}} f_{\infty,\ve}(\xi)|\xi_{1}|^{-2}d\xi +1
\geq  -(\sup_{|\xi_{1}+u_{\infty}| \leq \ve_{0}} |\xi_{1}|^{-2}) +1  >0,
\end{gather*}
where we have used $\ve_{0} = (u_{\infty}-1)/2$ and $u_{\infty}^{2}>1$.
By the Taylor theorem and the above two inequalities,
we conclude that there exists $\delta_{0}$ independent of $\ve$ such that
\begin{gather*}
\frac{d^{2} V_{\ve}}{d\phi^{2}}(\phi) 
\geq -(\sup_{|\xi_{1}+u_{\infty}| \leq \ve_{0}} |\xi_{1}|^{-2}) +1 -C_{0}\delta_{0} >0,
\quad \phi \in [0,\delta_{0}],
\end{gather*}
which together with \eqref{V2} leads to \eqref{positive1}.
The proof is complete.
\end{proof}

\subsection{The Delta Mass Limit}\label{S3.3}

This section deals with the proof of Theorem \ref{DiffThm1} on the delta mass limit.

\begin{proof}[Proof of Theorem \ref{DiffThm1}]
Let $(f_{\ve},\phi_{\ve})$ be a solution in Corollary \ref{4.1}, 
and also $(\rho_{\ve},u_{\ve})$ be the moments of $f_{\ve}$.
We first show the estimate of $\phi_{\ve}-\phi$ in \eqref{conver0}. 
From \eqref{sp3}, \eqref{EPform1}, \eqref{rho0'}, and \eqref{phieq1},
we observe that
\begin{align}
\partial_{xx} (\phi_{\ve}-\phi)
= W(\phi_{\ve}) - W(\phi) + R_{\ve}(x), \quad x>0,
\label{DiffEq0}
\end{align}
where
\begin{gather*}
W(\phi):= \frac{u_{\infty}}{\sqrt{u_{\infty}^{2}+2\phi}} - e^{-\phi},
\quad R_{\ve}(x):=\int_{\mathbb R^{3}} f_{\infty,\ve}(\xi)\frac{-\xi_{1}}{\sqrt{\xi_{1}^{2}+2\phi_{\ve}(x)}}  d\xi 
- \frac{u_{\infty}}{\sqrt{u_{\infty}^{2}+2\phi_{\ve}(x)}}.
\end{gather*}
Set $M_{\ve}:=\sup_{x \in {\mathbb R}_{+}} \left| R_{\ve}(x)\right|$.
Owing to the assumption $u_{\infty}>1$, we have ${d W }/{d\phi } (0)>0$.
Then it is seen by taking $\delta_{0}>0$ and $c_{0}>0$ small enough that 
\begin{gather}\label{W1}
c_{0} \leq  \frac{d W }{d\phi } (\phi) \leq c_{0}^{-1}, \quad \phi \in [0,\delta_{0}].
\end{gather}

Now we claim that
\begin{gather}\label{DiffEq3}
\sup_{x \in {\mathbb R}_{+}}|(\phi_{\ve}-\phi)(x)| \leq  M_{\ve}/c_{0}.
\end{gather}
To show this, we observe from \eqref{DiffEq0} that 
\begin{align}
-\partial_{xx} (\phi_{\ve}-\phi-M_{\ve}/c_{0})
+\left(W(\phi_{\ve}) - W(\phi) -M_{\ve} \right) \leq 0.
\label{DiffEq1}
\end{align}
Furthermore, $(\phi_{\ve}-\phi-M_{\ve}/c_{0})_{+}:=\max\{0, \ \phi_{\ve}-\phi-M_{\ve}/c_{0}\} \in H^{1}({\mathbb R}_{+})$ follows from \eqref{sbc4} and \eqref{sp4}.
Multiply \eqref{DiffEq1} by $(\phi_{\ve}-\phi-M_{\ve}/c_{0})_{+}$,
integrate it over $(0,\infty)$, and use $(\phi_{\ve}-\phi-M_{\ve}/c_{0})_{+} (0)=0$ to obtain
\begin{align}
\|\D_{x}(\phi_{\ve}-\phi-M_{\ve}/c_{0})_{+} \|_{L^{2}(\mathbb R_{+})}^{2}
+ \int_{0}^{\infty} \left(W(\phi_{\ve}) - W(\phi) -M_{\ve}\right)
(\phi_{\ve}-\phi-M_{\ve}/c_{0})_{+}dx \leq 0.
\label{DiffEq2}
\end{align}
The second term on the left hand side is positive. 
Indeed, using the mean value theorem, \eqref{W1}, and $0 \leq \phi,\phi_{\ve} \leq \delta_{0}$,
we arrive at
\begin{align*}
& \int_{0}^{\infty} \left(W(\phi_{\ve}) - W(\phi) -M_{\ve}\right)(\phi_{\ve}-\phi-M_{\ve}/c_{0})_{+}dx
\notag \\
& = \int_{0}^{\infty} \left(\int_{0}^{1}\frac{dW}{d\phi}(\theta \phi_{\ve}+(1-\theta)\phi)d\theta(\phi_{\ve}-\phi) -M_{\ve}\right)(\phi_{\ve}-\phi-M_{\ve}/c_{0})_{+}dx
\geq 0.
\end{align*}
Therefore, \eqref{DiffEq2} gives $(\phi_{\ve}-\phi-M_{\ve}/c_{0})_{+} = 0$ 
which means $\phi_{\ve}-\phi\leq M_{\ve}/c_{0}$. 
Similarly, $\phi_{\ve}-\phi \geq - M_{\ve}/c_{0}$ holds.
Thus the claim \eqref{DiffEq3} is vaild.

Next let us estimate $M_{\ve}$ by $C_{0} \ve$ as 
\begin{align*}
M_{\ve}
&= \sup_{x \in {\mathbb R}_{+}} \left|\int_{\mathbb R^{3}} f_{\infty,\ve}(\xi) \left(
\frac{-\xi_{1}}{\sqrt{\xi_{1}^{2}+2\phi_{\ve}(x)}}-\frac{u_{\infty}}{\sqrt{u_{\infty}^{2}+2\phi_{\ve}(x)}}  \right)d\xi \right| 
\notag \\
& \leq  \sup_{x \in {\mathbb R}_{+}} \left(\sup_{|\xi-(-u_{\infty},0,0)| \leq \ve} 
\left|\frac{-\xi_{1}}{\sqrt{\xi_{1}^{2}+2\phi_{\ve}(x)}}-\frac{u_{\infty}}{\sqrt{u_{\infty}^{2}+2\phi_{\ve}(x)}} \right| \right)
\int_{\mathbb R^{3}} f_{\infty,\ve}(\xi)d\xi
\notag \\
& \leq C_{0} \ve,
\end{align*}
where we have used the fact $\int_{\mathbb R^{3}} f_{\infty,\ve}(\xi)d\xi=1$ in deriving the equality,
and $C_{0}$ is a positive constant independent of $\ve$.
This together with \eqref{DiffEq3} leads to the estimate of $\phi_{\ve}-\phi$ in \eqref{conver0}.

We complete the proof by showing the other estimates in \eqref{conver0}.
Let us first handle $\rho_{\ve}-\rho$.
It is clear that $\rho_{\ve}(x)=\rho_{i}(\phi_{\ve}(x))$ holds owing to \eqref{rho0'}.
With the aid of this and \eqref{EPform1}, we estimate $\rho_{\ve}-\rho$ as
\begin{align*}
\sup_{x \in {\mathbb R}_{+}} \left| \rho_{\ve}(x)-\rho(x) \right|
&=\sup_{x \in {\mathbb R}_{+}} \left| \int_{\mathbb R^{3}} f_{\infty,\ve}(\xi) 
\left(\frac{-\xi_{1}}{\sqrt{\xi_{1}^{2}+2\phi_{\ve}(x)}} 
- \frac{u_{\infty}}{\sqrt{u_{\infty}^{2}+2\phi(x)}} \right) d\xi\right|
\notag \\
& \leq \sup_{x \in {\mathbb R}_{+}} \left(\sup_{|\xi-(-u_{\infty},0,0)| \leq \ve} 
\left|\frac{-\xi_{1}}{\sqrt{\xi_{1}^{2}+2\phi_{\ve}(x)}}-\frac{u_{\infty}}{\sqrt{u_{\infty}^{2}+2\phi(x)}} \right| \right)
\int_{\mathbb R^{3}} f_{\infty,\ve}(\xi)d\xi
\notag \\
& \leq C_{0} (\ve+\sup_{x \in {\mathbb R}_{+}}|\phi(x)-\phi_{\ve}(x)|) \leq C_{0} \ve.
\end{align*}
Let us treat $\rho_{\ve}u_{\ve}-\rho u$. 
By the change of variable $\sqrt{\xi_{1}^{2}-2\phi(x)}=-\zeta_{1}$, the term $\rho_{\ve}u_{\ve}$ is rewritten as
\begin{gather*}
(\rho_{\ve}u_{\ve})(x)=\int_{\mathbb R^{3}} \xi_{1} f_{\infty,\ve}(\xi)  d\xi.
\end{gather*}
It is easy to see $\rho u=-u_{\infty}$ from \eqref{EPform1}.
Then we arrive at
\begin{align*}
\sup_{x \in {\mathbb R}_{+}} \left| (\ro_{\ve}u_{\ve}) (x) - (\rho u)(x) \right|
&=\sup_{x \in {\mathbb R}_{+}} \left| \int_{\mathbb R^{3}} f_{\infty,\ve}(\xi) 
(\xi_{1}+u_{\infty}) d\xi\right| \leq  \ve.
\end{align*}
Consequently, \eqref{conver0} holds.
\end{proof}

\section{General Boundaries}\lb{S4}

In this section, we prove Theorems \ref{existence2}--\ref{DiffThm2} 
on the solvability and delta mass limit for general boundaries. 
Subsections \ref{S4.1} and \ref{S4.2} deal with the solvability for
the attractive and repulsive boundaries, respectively.
In subsection \ref{S4.3}, we study the delta mass limit.

\subsection{Solvability for the Attractive Boundary}\lb{S4.1}
We start from studying the necessary conditions for the solvability of the problem \eqref{sVP1}
similarly as in Section \ref{S3}.
Specifically, we show the following lemma which ensures immediately Assertion (iv) in Theorem \ref{existence2}.
\begin{lem}\label{lem41}
Let $\phi_{b}>0$, $f_b \in L^1(\R_+^3)$, $\alpha\neq1$, $f_{\infty} \in L^{1}(\mathbb R^{3})$, $f_{b}\geq0$, $f_{\infty} \geq 0$, and \eqref{netrual1} hold.
Suppose that  the problem \eqref{sVP1} has a solution $(f,\phi)$.
Then $\rho^{+} \in C([0,\phi_{b}])$, $V^{+} \in C^{1}([0,\phi_{b}])$, and the function $f_{\infty}$ satisfies the condition \eqref{need3}.
Furthermore, $\phi$ solves \eqref{phieq5} below with \eqref{sbc3} and \eqref{sbc4},
and $f$ is written as \eqref{fform2} by $\phi$.
If $V^{+} \in C^{2}([0,\phi_{b}])$,  the generalized Bohm criterion holds:
\begin{gather}\label{Bohm5}
\frac{d^{2} V^{+}}{d \phi^{2}}(0) \geq 0.
\end{gather}
\end{lem}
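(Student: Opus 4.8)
\textbf{Proof plan for Lemma \ref{lem41}.}
The strategy follows closely the structure of the proof of Lemma \ref{lem31}, now carrying along the two extra families of particles (the reflected ones and the ones injected from the boundary) that appear when $(f_b,\alpha)\neq(0,0)$. First I would record that $\D_x\phi(x)<0$ on $\overline{\mathbb R_+}$ by the sign of $\phi_b>0$ and condition (ii) of Definition \ref{DefS1}, and then, exactly as in Lemma \ref{lem31}, use \eqref{seq2}, \eqref{sbc4}, \eqref{weak2}, and the continuity of $f$ and $\phi$ to get that $\D_{xx}\phi$ is bounded, hence $\D_x\phi$ is uniformly continuous, hence $\lim_{x\to\infty}\D_x\phi(x)=0$.

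Next I would run the characteristics method on \eqref{weak1}: along each curve $\tfrac12\xi_1^2-\phi(x)=c$ the value of $f$ is constant, and because $\phi$ is monotone decreasing from $\phi_b$ to $0$, these curves connect the boundary $x=0$ (where \eqref{sbc1} holds) to $x=\infty$ (where \eqref{sbc2} holds). Tracking the curves sorted by the sign of $\xi_1$ and by whether $\xi_1^2-2\phi(x)$ or $\xi_1^2-2\phi(x)+2\phi_b$ is positive, I would obtain the three-term representation \eqref{fform2}: the first term is the incoming stream from $x=\infty$ that reaches $x$, the middle term is the stream launched from the wall (incorporating the factor $1/(1-\alpha)$ coming from solving the reflection relation \eqref{sbc1} for $f(0,\cdot)$ in terms of $f_b$), and the third term is the stream coming back from $x=\infty$ after having turned around. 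Matching the representation at $x=0$ with the boundary condition \eqref{sbc1} and at $x=\infty$ with \eqref{sbc2} forces the compatibility relation \eqref{need3}. Integrating \eqref{fform2} over $\xi\in\mathbb R^3$ and performing in each term the substitutions $\zeta_1=\mp\sqrt{\xi_1^2-2\phi(x)}$ and $\zeta_1=\sqrt{\xi_1^2-2\phi(x)+2\phi_b}$, I would identify $\int_{\mathbb R^3}f(x,\xi)\,d\xi=\rho_i^+(\phi(x))$ with $\rho_i^+$ as in \eqref{rho++}. Substituting this into \eqref{seq2} yields a scalar ODE $\D_{xx}\phi=\rho_i^+(\phi)-n_e(\phi)$, and multiplying by $\D_x\phi$, integrating on $(x,\infty)$, and using \eqref{sbc4} and $\lim_{x\to\infty}\D_x\phi=0$ gives the first-order equation $(\D_x\phi)^2=2V^+(\phi)$ with $V^+$ as in \eqref{V+} — this is \eqref{phieq5}.

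From $f\in C(\overline{\mathbb R_+};L^1(\mathbb R^3))$ and $\phi\in C(\overline{\mathbb R_+})$ one reads off that $x\mapsto\rho_i^+(\phi(x))$ is continuous; since $\phi$ is a continuous surjection onto $[0,\phi_b]$ (monotone from $\phi_b$ to $0$) with $\D_x\phi<0$, this forces $\rho_i^+\in C([0,\phi_b])$, and then $V^+\in C^1([0,\phi_b])$ by its definition as an antiderivative. Finally, assuming $V^+\in C^2([0,\phi_b])$, I would derive the generalized Bohm criterion \eqref{Bohm5} by the same Taylor-expansion argument as at the end of Lemma \ref{lem31}: the necessary conditions \eqref{netrual1} and \eqref{need3} give $V^+(0)=\tfrac{dV^+}{d\phi}(0)=0$, and \eqref{phieq5} forces $V^+(\phi)>0$ for $\phi\in(0,\phi_b)$ (since $(\D_x\phi)^2>0$ there, as $\D_x\phi<0$); if $\tfrac{d^2V^+}{d\phi^2}(0)<0$ then $V^+$ would be strictly negative on a right neighborhood of $0$, contradicting positivity, whence \eqref{Bohm5}.

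\textbf{Main obstacle.} The genuinely delicate point is the bookkeeping in the characteristics step: correctly partitioning $\overline{\mathbb R_+}\times\mathbb R^3$ into the regions swept by the three particle families, verifying that the turning points (where a characteristic is tangent to $\{\xi_1=0\}$) are handled consistently, and tracking how the reflection coefficient $\alpha$ propagates — in particular getting the $\tfrac1{1-\alpha}$ factor and the lower limit $\sqrt{2(\phi_b-\phi)\chi(\phi_b-\phi)}$ in \eqref{rho++} right, including the continuous extension past $\phi=\phi_b$ encoded by the indicator $\chi(\phi_b-\phi)$. A secondary technical nuisance is justifying the change of variables and the passage to the limit in the weak formulation when $f_\infty,f_b$ are merely $L^1$ (as opposed to smooth), which would be dealt with by the same mollification argument ($f_\infty^k,f_b^k\in C_0^\infty$ vanishing near $\xi_1=0$, converging in $L^1$) used in the proof of Assertions (i)--(ii) of Theorem \ref{existence1}.
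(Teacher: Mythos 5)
Your proposal follows essentially the same route as the paper's proof: the same monotonicity and decay of $\D_x\phi$, the same characteristics bookkeeping yielding \eqref{need3} and \eqref{fform2}, the same reduction to $(\D_x\phi)^2=2V^{+}(\phi)$, the same identification $\rho_i^{+}(\phi(x))=\|f(x)\|_{L^{1}}$ to get continuity of $\rho_i^{+}$ (with the value at $\phi=0$ supplied by $\lim_{x\to\infty}\|f(x)\|_{L^{1}}=\|f_\infty\|_{L^{1}}=1$), and the same Taylor argument for \eqref{Bohm5}. The plan is correct as stated.
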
 
\begin{proof}
In much the same way as in the proof of Lemma \ref{lem31}, we see 
$\D_{x}\phi(x)<0$ and  $\lim_{x \to \infty}\D_{x}\phi(x)$ $=0$.
Let us first show that $f_{\infty}$ satisfies \eqref{need3} and $f$ is written by \eqref{fform2}.
Applying the characteristics method to \eqref{weak1}, 
we see that the value of $f$ must remain a constant along the characteristic curves
$\xi_{1}^{2}/2-\phi(x)=c$,
where $c$ is some constant. 
The characteristics are drawn as in Figure \ref{fig1}.
Therefore, there hold that
\begin{align*}
{f}(y,-\sqrt{\eta_1^2+2{\phi}(y)},\eta')=f_\infty(\eta), 
& \quad
(y,\eta_1,\eta')\in X_{1},
\\
f(y,\pm\sqrt{2{\phi}(y)-\eta_1^2},\eta')
=\frac{1}{1-\alpha}\,f_b(\sqrt{2\phi_b-\eta_1^2},\eta'),
& \quad
(y,\eta_1,\eta')\in  X_{2},
\\
f(y,\sqrt{\eta_1^2+2{\phi}(y)},\eta')
=f_\infty(\eta) 
=f_b(\sqrt{\eta_1^2+2\phi_b},\eta')
+\alpha f_\infty(-\eta_1,\eta'), 
& \quad
(y,\eta_1,\eta')\in X_{3},
\end{align*}
where $\eta=(\eta_{1},\eta_{2},\eta_{3})=(\eta_{1},\eta')$ and
\[
X_{1}:=\overline{\R_+}\times\R_-^3, \quad
X_{2}:=\{(y,\eta_1,\eta')\in\overline{\R_+}\times\R^3\;|\;\eta_1^2<2{\phi}(y)\leq 2\phi_b\}, \quad
X_{3}:=\overline{\R_+}\times\R_+^3. 
\]
The last equality means that \eqref{need3} must hold.
Furthermore, we conclude from these three equalities  \footnote{If $\alpha=1$ and $f_{b}=0$, we have multiple choices for the second term on the right hand side of \eqref{fform2}.}that $f$ must be written by \eqref{fform2}, i.e.
\begin{gather*}
\begin{aligned}
{f}(x,\xi)
& =f_\infty(-\sqrt{\xi_1^2-2{\phi}(x)},\xi')
\chi(\xi_{1}^{2}-2\phi(x))\chi(-\xi_{1}) \\
& \qquad
+\frac{1}{1-\alpha}\,f_b(\sqrt{\xi_1^2-2{\phi}(x)+2\phi_b},\xi')
\chi(-\xi_1^2+2{\phi}(x)) \\
& \qquad
+f_\infty(\sqrt{\xi_1^2-2{\phi}(x)},\xi')
\chi(\xi_{1}^{2}-2\phi(x))\chi(\xi_{1}).
\end{aligned}
\end{gather*}

Next we show that $\phi$ satisfies \eqref{phieq5} below.
Integrating \eqref{fform2} over $\mathbb R^{3}$ and using 
the change of variables $\sqrt{\xi_{1}^{2}-2\phi}=-\zeta_{1}$, $\sqrt{\xi_1^2-2\phi+2\phi_b}=\zeta_1$, and $\sqrt{\xi_{1}^{2}-2\phi}=\zeta_{1}$
for the first, second, and third terms on the right hand side of \eqref{fform2}, respectively, we see that
\begin{align}
\int_{\mathbb R^{3}} f(x,\xi) \,d\xi
&=\int_{{\mathbb R}^3}f_\infty(\xi)\frac{|\xi_1|}{\sqrt{\xi_1^2+2\phi(x)}}\,d\xi
\notag\\
&\quad +\frac{2}{1-\alpha}\int_{\sqrt{2(\phi_b-\phi(x))}}^{\sqrt{2\phi_b}}\int_{\mathbb R^2}
f_b(\xi)\frac{\xi_1}{\sqrt{\xi_1^2+2\phi(x)-2\phi_b}}\,d\xi_1\,d\xi'
\notag\\
&=\rho_{i}^{+}(\phi(x))
\label{rho+'},
\end{align}
where $\phi(x) \in [0,\phi_{b}]$ and $\rho_{i}^{+}$ is the same function defined in \eqref{rho++}.
Substituting \eqref{rho+'} into \er{seq2} yields an ordinary differential equation for $\phi$:
\begin{equation}\lb{phieq5+} 
\partial_{xx} \phi 
= \rho_{i}^{+}(\phi) 
- n_{e}(\phi), \quad x>0.
\end{equation}
Multiply \er{phieq5+} by $\D_{x}\phi$, integrate it over $(x,\infty)$, and use \eqref{sbc4} and $\lim_{x \to \infty}\D_{x}\phi(x)=0$ to obtain
\begin{equation}\lb{phieq5}
(\D_{x} \phi)^2=2V^{+}(\phi), \quad V^{+}(\phi)=\int_{0}^{\phi} \left(\rho_{i}^{+}(\varphi) - n_{e}(\varphi) \right)  d\vphi,
\end{equation}
where the function $V^{+}$ is the same function defined in \eqref{V+}.
Thus $\phi$ must satisfy \eqref{phieq5}.

Now we show $\rho_{i}^{+} \in C([0,\phi_{b}])$ which immediately gives $V^{+} \in C^{1}([0,\phi_{b}])$.
Owing to $ f \in C(\overline{\R_{+}};L^{1}(\R^{3}))$ in Definition \ref{DefS1} and \eqref{weak2}, there hold that
\begin{align*}
\rho_{i}^{+}(\phi(x))= \| f(x) \|_{L^{1}} \in C(\overline{\R_{+}}),
\quad
\lim_{x\to\infty}\| f(x) \|_{L^{1}} = \| f_{\infty} \|_{L^{1}}=1.
\end{align*}
These imply  $\rho_{i}^{+} \in C([0,\phi_{b}])$ with the aid of \eqref{sbc4},
$ \phi \in C(\overline{\R_{+}})$, and $\D_{x} \phi <0$ in Definition \ref{DefS1}.

What is left is to obtain the generalized Bohm criterion \eqref{Bohm5}.
We see from $n_{e}(0)=1$ and \eqref{netrual1} that
\begin{equation}\lb{V5}
V^{+}(0)=\frac{d V^{+}}{d \phi}(0)=0.
\end{equation}
If $V^{+} \in C^{2}([0,\phi_{b}])$, we arrive at \eqref{Bohm5}
from \eqref{phieq5} and \eqref{V5} with the aid of the Taylor theorem. 
The proof is complete.
\end{proof}

Assertion (iii) in Theorem \ref{existence2} can be shown 
by the same method as in the proof of Assertion (iii) in Theorem \ref{existence1} with the aid of \eqref{phieq5}.
We omit the proof.
Let us show Assertions (i) and (ii) in Theorem \ref{existence2}.

\begin{proof}[Proof of Assertions (i) and (ii) in Theorem \ref{existence2}]
We first note that \eqref{V5} holds. It is seen that $B^{+}\neq \emptyset $ if $d^{2} V^{+}/d\phi^{2}(0)>0$,
since the Taylor theorem with \eqref{V5} ensures that $V^{+}$ is positive around $\phi=0$.
Let us show that $\phi_{b} < \sup B^{+}$ is a necessary condition for the solvability stated in Assertions (i) and (ii).
Suppose that a solution $(f,\phi)$ exists for $\phi_{b} \geq \sup B^{+}$.
Then $\sup B^{+} \in (0,\phi_{b}]$ and $V^{+}(\sup B^{+})=0$.
Furthermore, there exists a unique $x_{*} \in \overline{\mathbb R_{+}}$ so that $\phi(x_{*})=\sup B^{+}$.
On the other hand, due to \eqref{phieq5}, it is seen that $(\partial_{x} \phi(x_{*}))^{2}=2V^{+}(\sup B^{+})=0$. This contradicts to $\partial_{x} \phi (x)<0$ in Definition \ref{DefS1}. 
Hence, $\phi_{b} < \sup B^{+}$ is necessary.

Hereafter we discuss simultaneously Assertions (i) and (ii).
Suppose that $\phi_{b} < \sup B^{+}$ holds, which implies that $V^{+}(\phi)>0$ for $\phi \in (0,\phi_{b}]$.
The Taylor theorem with \eqref{V5} and the assumption $d^{2} V^{+}/d\phi^{2}(0)\geq 0$ ensures that 
$\sqrt{V^{+}}$ is Lipschitz continuous around $\phi=0$.
Combining this and the fact that $V^{+}(\phi)>0$ for $\phi \in (0,\phi_{b}]$,
we deduce that $\sqrt{V^{+}}$ is Lipschitz continuous on $[0,\phi_{b}]$.
Hence, we can define $\phi$ by solving \eqref{phieq5} with \eqref{sbc3} and \eqref{sbc4}.
Then it is seen that $\phi$ satisfies $\partial_{x}\phi<0$, $\phi \in C(\overline{\mathbb R_{+}}) \cap C^{2}(\mathbb R_{+})$, and \eqref{phieq5+}  by following the proof of  Assertions (i) and (ii) of Theorem \ref{existence1} in subsection \ref{S3.2}.

Now by using $\phi$, we define $f$ as \eqref{fform2}, i.e. 
\begin{gather*}
\begin{aligned}
{f}(x,\xi)
& =f_\infty(-\sqrt{\xi_1^2-2{\phi}(x)},\xi')
\chi(\xi_{1}^{2}-2\phi(x))\chi(-\xi_{1}) \\
& \qquad
+\frac{1}{1-\alpha}\,f_b(\sqrt{\xi_1^2-2{\phi}(x)+2\phi_b},\xi')
\chi(-\xi_1^2+2{\phi}(x)) \\
& \qquad
+f_\infty(\sqrt{\xi_1^2-2{\phi}(x)},\xi')
\chi(\xi_{1}^{2}-2\phi(x))\chi(\xi_{1})
\\
&=:F_{1}(x,\xi)+F_{2}(x,\xi)+F_{3}(x,\xi),
\end{aligned}
\end{gather*}
and prove that $f$ satisfies the conditions (i)--(iii) in Definition \ref{DefS1}.
Owing to \eqref{need3}, $f_{b} \in L^{1}(\mathbb R^{3}_{+})\cap L^{r}(0,\sqrt{2\phi_{b}};L^{1}(\mathbb R^{2}))$, and $f_{\infty} \in L^{1}(\mathbb R^{3})$,
there exist sequences $\{f_{\infty}^{k}\}, \{f_{b}^{k}\} \subset C^{\infty}_{0}(\mathbb R^{3})$ 
such that \eqref{need3} with $(f_{\infty},f_{b})=(f_{\infty}^{k},f_{b}^{k}) $ holds;
$f_{\infty}^{k}(\xi)=0$ holds for $|\xi_{1}|<1/k$; $f_{b}^{k}(\xi)=0$ holds for $|\xi_{1}-\sqrt{2\phi_{b}}|<1/k$;
$f_{\infty}^{k} \to f_{\infty}$  in $L^{1}(\mathbb R^{3})$ as $k \to \infty$;
$f_{b}^{k} \to f_{b}$  in $L^{1}(\mathbb R^{3}_{+}) \cap L^{r}(0,\sqrt{2\phi_{b}};L^{1}(\mathbb R^{2}))$ as $k \to \infty$.
Let us show the condition (i), i.e. $f \in C(\overline{\mathbb R_{+}};L^{1}(\mathbb R^{3}))\cap L^{1}_{loc}(\mathbb R_{+}\times\mathbb R^{3})$.
It is seen that $F_{1}, F_{3} \in C(\overline{\mathbb R_{+}};L^{1}(\mathbb R^{3}))$ 
in much the same way as the proof of Assertions (i) and (ii) of Theorem \ref{existence1}. 
The fact $F_{2}(x,\cdot) \in L^{1}(\mathbb R^{3})$ follows from \eqref{rho+} and \eqref{rho+'}.
To investigate the continuity of $F_{2}$, we set $G(x,\xi)=\chi(-\xi_1^2+2{\phi}(x))$ and observe that for $x,x_{0} \in \overline{\mathbb R_{+}}$,
\begin{align*}
&(1-\alpha)\|F_{2}(x)-F_{2}(x_{0})\|_{L^{1}(\mathbb R^{3})}
\\
&=\int_{\mathbb R^{3}}\left| f_b(\sqrt{\xi_1^2-2{\phi}(x)+2\phi_b},\xi')G(x,\xi)-f_b(\sqrt{\xi_1^2-2{\phi}(x_{0})+2\phi_b},\xi')G(x_{0},\xi) \right| d\xi
\\
&\leq \int_{\mathbb R^{3}}\left| f_b(\sqrt{\xi_1^2-2{\phi}(x)+2\phi_b},\xi')G(x,\xi)-f_b^{k}(\sqrt{\xi_1^2-2{\phi}(x)+2\phi_b},\xi')G(x,\xi)\right| d\xi
\\
&\quad +\int_{\mathbb R^{3}}\left| f_b^{k}(\sqrt{\xi_1^2-2{\phi}(x)+2\phi_b},\xi')G(x,\xi)-f_b^{k}(\sqrt{\xi_1^2-2{\phi}(x_{0})+2\phi_b},\xi')G(x_{0},\xi) \right| d\xi
\\
&\quad +\int_{\mathbb R^{3}}\left| f_b(\sqrt{\xi_1^2-2{\phi}(x_{0})+2\phi_b},\xi')G(x_{0},\xi)-f_b(\sqrt{\xi_1^2-2{\phi}(x_{0})+2\phi_b},\xi')G(x_{0},\xi) \right| d\xi
\\
&=:K_{1}+K_{2}+K_{3}.
\end{align*}
Using the change of variable $\zeta_{1}=\sqrt{\xi_1^2-2{\phi}(x)+2\phi_b}$, we can estimate $K_{1}$ as
\begin{align*}
K_{1}&= 2\int_{\sqrt{2(\phi_b-{\phi}(x))}}^{\sqrt{2\phi_b}} \frac{\xi_1}{\sqrt{\xi_1^2+2\phi(x)-2\phi_b}} 
\left( \int_{\mathbb R^2} |f_b(\xi)-f^{k}_b(\xi)|  d\xi' \right) d\xi_1  
\\
& \leq 2 \sup_{x \in \mathbb R_{+}}\left(\int_{\sqrt{2(\phi_b-{\phi}(x))}}^{\sqrt{2\phi_b}} \frac{\xi_1^{r'}}{(\xi_1^2+2\phi(x)-2\phi_b)^{r'/2}} \,d\xi_1\right)^{1/r'} \| f_b-f^{k}_b \|_{L^{r}(0,\sqrt{2\phi_{b}};L^{1}(\mathbb R^{2}))} 
\\
&\leq  C \| f_b-f^{k}_b\|_{L^{r}(0,\sqrt{2\phi_{b}};L^{1}(\mathbb R^{2}))}    \to 0 \quad \text{as $k\to \infty$},
\end{align*}
where $r'<2$ is the H\"older conjugate of $r>2$, and $C$ is some positive constant. 
Similarly, $K_{3} \to 0$ as $k \to \infty$. 
Thus $K_{1}$ and $K_{3}$ can be arbitrarily small for suitably large $k$. 
For the fixed $k$, the dominated convergence theorem ensures that $K_{2}$ converges to zero as $x \to x_{0}$.
Hence, we deduce that $F_{2} \in C(\overline{\mathbb R_{+}};L^{1}(\mathbb R^{3}))$ holds and so does 
$f \in C(\overline{\mathbb R_{+}};L^{1}(\mathbb R^{3}))$.
Now it is straightforward to show $f \in L^{1}_{loc}(\mathbb R_{+}\times\mathbb R^{3})$.

It is clear that  the condition (ii), i.e. $f \geq 0$, holds.
Let us prove \eqref{weak1} and \eqref{weak2} in the condition (iii). 
Obviously, \eqref{weak2} follows from the same manner as above.
It is also evident that the function $f^{k}$ defined by replacing $(f_{\infty},f_{b})$ by  $(f_{\infty}^{k},f_{b}^{k})$ in \eqref{fform2} belongs to $C^{\infty}_{0}(\mathbb R^{3})$, and satisfies the weak form \eqref{weak1} for each $k$.
Letting $k \to \infty$, we see that $f$ also satisfies \eqref{weak1}.
Consequently, all the conditions (i)--(iii) hold.

The condition (iv) is validated by \eqref{fform2}, \eqref{rho+'}, and \eqref{phieq5+}.
The uniqueness of a solution $(f,\phi)$ follows from 
the uniqueness of the solution of \eqref{phieq5} with \eqref{sbc3} and \eqref{sbc4}.
We have the decay estimate \eqref{decay2} from \eqref{phieq5}, \eqref{V5}, and
${d^{2} V^{+}}/{d \phi^{2}}(0) >0$. The proof is complete.
\end{proof}

We also have Corollary \ref{4.2} stating
that $\sup B^{+}$ in Assertion (i) of Theorem \ref{existence2} is bounded from below 
by some positive constant $\delta_{0}$ independent of $\ve$ and $\phi_{b}$
for the problem \eqref{sVP1} with $(f_{b},f_{\infty})=(g_{b,\ve},g_{\infty,\ve})$,
where $g_{b,\ve}$ and $g_{\infty,\ve}$ are defined in \eqref{fb2} and \eqref{fve2}.
As mentioned just after \eqref{velocity1}, the functions $g_{b,\ve}$ and $g_{\infty,\ve}$ satisfy
the conditions \eqref{netrual1}, \eqref{need3}, and $d^{2} V^{+}/d\phi^{2}(0)>0$ being in Theorem \ref{existence2} for $|\phi_{b}| \ll 1$ and $\ve\ll 1$ provided that \eqref{velocity1} holds. Furthermore, $B^{+}$ is independent of $\phi_{b}$, and also $\phi_{b}<\sup B^{+}$ holds for $\phi_{b} \ll 1$.
We omit the proof of Corollary \ref{4.2}, since it is the same as the proof of Corollary \ref{4.1}.

\begin{cor}\label{4.2}
Let $\alpha \neq 1$ and \eqref{velocity1} hold.
There exist positive constants $\ve_{0}$ and $\delta_{0}$ such that 
if $0<\phi_{b}<\delta_{0}$, then the problem \eqref{sVP1} with $(f_{b},f_{\infty})=(g_{b,\ve},g_{\infty,\ve})$
has a unique solution $(f_{\ve},\phi_{\ve}) \in C^{1}(\overline{\mathbb R_{+}}\times \mathbb R^{3}) \times C^{2}(\overline{\mathbb R_{+}})$ for any $\ve \in (0,\ve_{0})$.
\end{cor}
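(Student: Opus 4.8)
The plan is to imitate the proof of Corollary~\ref{4.1} essentially verbatim, the only genuinely new ingredient being that the threshold $\delta_{0}$ must be shown to be independent of $\phi_{b}$ as well as of $\ve$. As recorded in the text just after \eqref{velocity1}, the pair $(g_{b,\ve},g_{\infty,\ve})$ satisfies \eqref{netrual1}, \eqref{need3} and, once the reduction below is in place, $d^{2}V^{+}/d\phi^{2}(0)=1-\int_{{\mathbb R}^{3}}|\xi_{1}|^{-2}g_{\infty,\ve}(\xi)\,d\xi>0$ together with $V^{+}\in C^{\infty}\subset C^{2}$, so that all hypotheses of Theorem~\ref{existence2} will hold. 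Hence it suffices to exhibit $\ve_{0}>0$ and $\delta_{0}>0$, with $\delta_{0}$ independent of $\ve\in(0,\ve_{0})$ and of $\phi_{b}$, such that $V^{+}_{\ve}(\phi)>0$ for $\phi\in(0,\delta_{0}]$, where $V^{+}_{\ve}$ denotes \eqref{V+} built from $(g_{b,\ve},g_{\infty,\ve})$. Once this is known, re-running the construction of $\phi$ from \eqref{phieq5} with \eqref{sbc3}, \eqref{sbc4} and of $f$ by \eqref{fform2}, exactly as in the proof of Assertions~(i) and (ii) of Theorem~\ref{existence2}, produces the unique solution $(f_{\ve},\phi_{\ve})\in C^{1}(\overline{{\mathbb R}_{+}}\times{\mathbb R}^{3})\times C^{2}(\overline{{\mathbb R}_{+}})$ for all $0<\phi_{b}<\delta_{0}$, and $\phi_{b}<\delta_{0}\le\sup B^{+}$ then holds automatically once $\delta_{0}$ is taken no larger than $\sup B^{+}$, which is legitimate because $\sup B^{+}$ will be seen to be $\phi_{b}$-independent in this regime.

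The first step is a support reduction. For $0<\ve<v_{b}$ the cut-off $\varphi(\cdot)$ in \eqref{fb2} forces $\sqrt{\xi_{1}^{2}-2\phi_{b}}\in(v_{b}-\ve,v_{b}+\ve)$ on $\supp g_{b,\ve}$, i.e. $\supp g_{b,\ve}\subset\{\xi_{1}>\sqrt{(v_{b}-\ve)^{2}+2\phi_{b}}\}$, which is disjoint from $(0,\sqrt{2\phi_{b}}\,]$. Since the $\xi_{1}$-range of the second integral in \eqref{rho++} lies inside $(0,\sqrt{2\phi_{b}})$ for every $\phi\ge0$, that term vanishes identically, and therefore $\rho_{i}^{+}(\phi)=\int_{{\mathbb R}^{3}}g_{\infty,\ve}(\xi)|\xi_{1}|/\sqrt{\xi_{1}^{2}+2\phi}\,d\xi$ for all $\phi\ge0$; in particular $B^{+}$ and $\sup B^{+}$ do not depend on $\phi_{b}$, and $\rho_{i}^{+}$ has exactly the form of $\rho_{i}$ in \eqref{V}. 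Because $g_{\infty,\ve}$ is a sum of three bumps centred at $\xi_{1}=-v_{\infty},v_{b},v_{\infty}$, for $0<\ve<\ve_{0}:=\tfrac12\min\{v_{b},v_{\infty}\}$ one has $\supp g_{\infty,\ve}\subset\{\,|\xi_{1}|\ge\tfrac12\min\{v_{b},v_{\infty}\}\,\}$, so $|\xi_{1}|^{-k}$ is bounded on $\supp g_{\infty,\ve}$ by a constant independent of $\ve$ and $\phi_{b}$ for each $k$; consequently $V^{+}_{\ve}\in C^{\infty}([0,\infty))$.

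The second step is the quantitative Taylor estimate, carried out exactly as in Corollary~\ref{4.1}. As in \eqref{V5} one has $V^{+}_{\ve}(0)=dV^{+}_{\ve}/d\phi(0)=0$, and differentiating under the integral and using $\int g_{\infty,\ve}=1$ gives, as in \eqref{V2}, $d^{2}V^{+}_{\ve}/d\phi^{2}(0)=1-\int_{{\mathbb R}^{3}}|\xi_{1}|^{-2}g_{\infty,\ve}(\xi)\,d\xi$; since the three bumps carry masses $m_{\infty}$, $m_{b}$, $\alpha m_{\infty}$ concentrated near $\xi_{1}=-v_{\infty},v_{b},v_{\infty}$, this integral tends to $m_{b}v_{b}^{-2}+(1+\alpha)m_{\infty}v_{\infty}^{-2}<1$ as $\ve\to0$ by \eqref{velocity1}, so $d^{2}V^{+}_{\ve}/d\phi^{2}(0)\ge c_{1}>0$ for some $c_{1}$ independent of $\ve\in(0,\ve_{0})$ (shrinking $\ve_{0}$ if needed) and of $\phi_{b}$. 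Likewise $|d^{3}V^{+}_{\ve}/d\phi^{3}(\phi)|\le 3\int_{{\mathbb R}^{3}}|\xi_{1}|^{-4}g_{\infty,\ve}(\xi)\,d\xi+\sup_{[0,1]}|n_{e}''|\le C_{0}$ for $\phi\in(0,1]$, with $C_{0}$ independent of $\ve$ and $\phi_{b}$ by the support bound above. Taylor's theorem then gives $d^{2}V^{+}_{\ve}/d\phi^{2}(\phi)\ge c_{1}-C_{0}\phi>0$ for $\phi\in[0,\delta_{0}]$ with $\delta_{0}:=\min\{c_{1}/(2C_{0}),\,1,\,\sup B^{+}\}$, and since $V^{+}_{\ve}$ and its first derivative vanish at $0$, integrating twice yields $V^{+}_{\ve}(\phi)>0$ on $(0,\delta_{0}]$, as required.

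The only place where any care is needed — and the reason the paper can assert that this is "the same as the proof of Corollary~\ref{4.1}" — is the uniformity of $c_{1}$ and $C_{0}$ in both $\ve$ and $\phi_{b}$, which is furnished precisely by the vanishing of the $g_{b,\ve}$-contribution to $\rho_{i}^{+}$ and by the fixed separation of $\supp g_{\infty,\ve}$ from $\{\xi_{1}=0\}$. Everything else, including the continuity and weak-formulation verifications for $f$ and the decay estimate \eqref{decay2}, coincides with the corresponding parts of the proofs of Corollary~\ref{4.1} and Theorem~\ref{existence2}.
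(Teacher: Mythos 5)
Your proposal is correct and follows exactly the route the paper intends: the paper omits this proof, stating it is the same as that of Corollary~\ref{4.1}, and your write-up is precisely that adaptation, with the two needed new observations (the support of $g_{b,\ve}$ avoids $(0,\sqrt{2\phi_{b}}]$ so the boundary term in $\rho_{i}^{+}$ vanishes and $B^{+}$ is $\phi_{b}$-independent, and $\supp g_{\infty,\ve}$ stays uniformly away from $\{\xi_{1}=0\}$ so the Taylor bounds on $V^{+}_{\ve}$ are uniform in $\ve$ and $\phi_{b}$) both matching the remarks the paper makes just before the corollary. No gaps.
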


\subsection{Solvability for the Repulsive Boundary}\lb{S4.2}
We first study the necessary conditions for the solvability of the problem \eqref{sVP1}
similarly as in subsection \ref{S4.1}. 
Specifically, we show the following lemma which ensures immediately Assertion (iv) in Theorem \ref{existence3}.

\begin{lem}\label{lem42}
Let $\phi_{b}<0$, $f_b \in L^1(\R_+^3)$, $f_{\infty} \in L^{1}(\mathbb R^{3})$, $f_{b}\geq0$, $f_{\infty} \geq 0$, and \eqref{netrual1} hold.
Suppose that  the problem \eqref{sVP1} has a solution $(f,\phi)$.
Then $\rho^{-} \in C([\phi_{b},0])$, $V^{-} \in C^{1}([\phi_{b},0])$, and the function $f_{\infty}$ satisfies the condition \eqref{need4}.
Furthermore, $\phi$ solves \eqref{phieq6} below with \eqref{sbc3} and \eqref{sbc4},
and $f$ is written as \eqref{fform5} by $\phi$.
If $V^{-} \in C^{2}([\phi_{b},0])$,  the generalized Bohm criterion also holds:
\begin{gather}\label{Bohm6}
\frac{d^{2} V^{-}}{d \phi^{2}}(0) \geq 0.
\end{gather}
\end{lem}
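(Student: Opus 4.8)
The proof will run parallel to that of Lemma \ref{lem41}, the only structural change being that the sign of the monotonicity of $\phi$ is reversed. Since $\phi_b<0$, $\lim_{x\to\infty}\phi(x)=0$, and $\phi$ is monotone by condition (ii) of Definition \ref{DefS1}, the only admissible alternative is $\partial_x\phi(x)>0$; hence $\phi(x)\in[\phi_b,0)$ for every $x\ge 0$ and $\phi$ is a strictly increasing continuous bijection of $\overline{\R_+}$ onto $[\phi_b,0)$. Exactly as in Lemmas \ref{lem31} and \ref{lem41}, \eqref{seq2} together with $f\in C(\overline{\R_+};L^{1}(\R^{3}))$ and \eqref{weak2} shows that $\partial_{xx}\phi$ is bounded, so $\partial_x\phi$ is uniformly continuous on $\overline{\R_+}$, which with \eqref{sbc4} forces $\lim_{x\to\infty}\partial_x\phi(x)=0$.

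Next I would apply the method of characteristics to \eqref{weak1}: with $\phi$ regarded as given, $f$ is constant along the curves $\xi_1^{2}/2-\phi(x)=c$. Because $\phi\le 0$ here, every such curve reaches $x=\infty$, where $f=f_\infty$; the new feature relative to Lemma \ref{lem41} is that a curve with $0\le c<|\phi_b|$ never meets the boundary but has a turning point at the unique $x_{*}$ with $\phi(x_{*})=-c$, where $\xi_1=0$, whereas a curve with $c\ge|\phi_b|$ meets $x=0$ with $\xi_1=-\sqrt{\xi_1^{2}+2\phi_b}<0$. Splitting $\overline{\R_+}\times\R^{3}$ according to the sign of $\xi_1$ and to whether $c\ge|\phi_b|$ or $c<|\phi_b|$, and tracing each characteristic to $x=\infty$ (backward for $\xi_1<0$, forward for $\xi_1>0$), I get, just as in the proof of Lemma \ref{lem41}, that $f$ is constant along characteristics with the attendant equalities: the incoming branch gives $f(y,\eta)=f_\infty(-\sqrt{\eta_1^{2}+2\phi(y)},\eta')$ for $\eta_1<0$; requiring the two ends of a turning-point curve to carry the same value forces $f_\infty(\xi_1,\xi')=f_\infty(-\xi_1,\xi')$ for $|\xi_1|<\sqrt{2|\phi_b|}$; and matching the value leaving the wall, governed by \eqref{sbc1}, with the value carried out to $x=\infty$ along a curve with $c\ge|\phi_b|$ forces $f_\infty(\xi)=f_b(\sqrt{\xi_1^{2}+2\phi_b},\xi')+\alpha f_\infty(-\xi_1,\xi')$ for $\xi_1>\sqrt{2|\phi_b|}$. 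These two relations are precisely \eqref{need4}, and the same bookkeeping yields the representation \eqref{fform5}.

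From here the argument is a transcription of the corresponding part of Lemma \ref{lem41}. Integrating \eqref{fform5} over $\R^{3}$ with the substitutions $\zeta_1=-\sqrt{\xi_1^{2}-2\phi(x)}$ on $\{\xi_1<0\}$ and $\zeta_1=\sqrt{\xi_1^{2}-2\phi(x)}$ on $\{\xi_1>0\}$ — which between them sweep out only $\{\zeta_1^{2}+2\phi(x)\ge 0\}$, whence the indicator in \eqref{rho--} — gives $\int_{\R^{3}}f(x,\xi)\,d\xi=\rho_i^{-}(\phi(x))$. Substituting into \eqref{seq2} yields $\partial_{xx}\phi=\rho_i^{-}(\phi)-n_e(\phi)$, and multiplying by $\partial_x\phi$ and integrating over $(x,\infty)$ with \eqref{sbc4} and $\lim_{x\to\infty}\partial_x\phi=0$ gives $(\partial_x\phi)^{2}=2V^{-}(\phi)$, i.e.\ \eqref{phieq6}. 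Continuity of $\rho_i^{-}$ on $[\phi_b,0]$ follows from $\rho_i^{-}(\phi(x))=\|f(x)\|_{L^{1}}\in C(\overline{\R_+})$, from $\|f(x)\|_{L^{1}}\to\|f_\infty\|_{L^{1}}=1$ as $x\to\infty$, and from $\phi$ being a continuous strictly increasing bijection of $\overline{\R_+}$ onto $[\phi_b,0)$; this immediately gives $V^{-}\in C^{1}([\phi_b,0])$. Finally $V^{-}(0)=0$ and $\tfrac{dV^{-}}{d\phi}(0)=\rho_i^{-}(0)-n_e(0)=1-1=0$, and if moreover $V^{-}\in C^{2}([\phi_b,0])$, then $(\partial_x\phi)^{2}=2V^{-}(\phi)\ge 0$ together with the surjectivity of $\phi$ forces $V^{-}(\phi)\ge 0$ for all $\phi\in[\phi_b,0)$, so a Taylor expansion at $\phi=0$ gives $\tfrac12\tfrac{d^{2}V^{-}}{d\phi^{2}}(0)=\lim_{\phi\to 0^{-}}V^{-}(\phi)/\phi^{2}\ge 0$, which is \eqref{Bohm6}.

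I expect the only genuinely new point, compared with Lemma \ref{lem41}, to be the geometry of the characteristics when $\partial_x\phi>0$: there is no ``forbidden wedge'' as in Figure \ref{fig1}, but low-energy particles are specularly reflected at a turning point, and the care needed lies in bookkeeping this reflection together with the wall matching so as to recover exactly the two alternatives in \eqref{need4}. Everything afterwards is the attractive case of Lemma \ref{lem41} with the obvious sign changes, so I would present it briefly.
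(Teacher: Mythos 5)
Your proposal is correct and follows essentially the same route as the paper: monotonicity $\partial_x\phi>0$ and $\lim_{x\to\infty}\partial_x\phi=0$, the method of characteristics with the three regions (incoming, turning-point, wall-reflected) yielding \eqref{need4} and \eqref{fform5}, integration in $\xi$ to reduce to the ODE \eqref{phieq6}, continuity of $\rho_i^-$ via $\|f(x)\|_{L^1}$, and the Taylor argument for \eqref{Bohm6}. The turning-point geometry you flag as the "only genuinely new point" is exactly what the paper encodes in its region $Y_2$ and Figure \ref{fig2}.
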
 
\begin{proof}
In the same manner as in subsection \ref{S3.1}, we see 
$\D_{x}\phi(x)>0$ and  $\lim_{x \to \infty}\D_{x}\phi(x)=0$.
Let us first show that $f_{\infty}$ satisfies \eqref{need4} and $f$ is written by \eqref{fform5}.
Applying the characteristics method to \eqref{weak1}, 
we see that the value of $f$ must remain a constant along the characteristic curves
$\xi_{1}^{2}/2-\phi(x)=c$,
where $c$ is some constant. 
We draw the illustration of characteristics in Figure \ref{fig2} below.
\begin{figure}[htbp]
\begin{center}
   \includegraphics[width=0.5\textwidth]{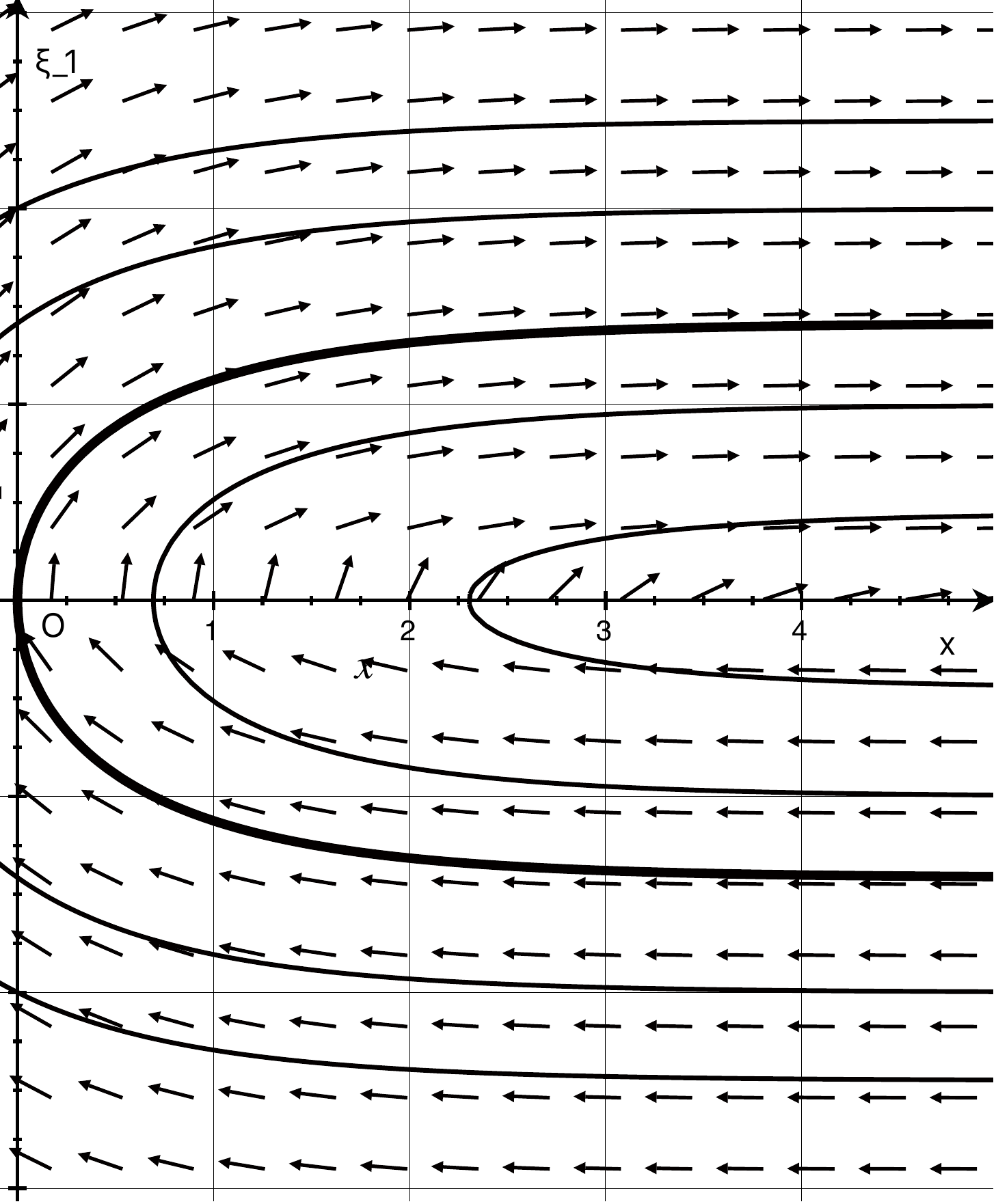}
\end{center}
  \caption{characteristics curves for the case $\partial_{x} \phi >0$}  
  \label{fig2}
\end{figure}
It tells us that
\begin{align*}
f(y,-\sqrt{\eta_{1}^{2}+2\phi(y)},\eta')=f_{\infty}(\eta), &\quad (y,\eta_{1},\eta') \in Y_{1},
 \\
{f}(y,\pm\sqrt{\eta_1^2+2{\phi}(y)},\eta')=f_\infty(\eta), &\quad (y,\eta_{1},\eta') \in Y_{2},
 \\
{f}(y,\sqrt{\eta_1^2+2{\phi}(y)},\eta')=f_\infty(\eta)=f_b(\sqrt{\eta_1^2+2\phi_b},\eta')
+\alpha f_\infty(-\eta_1,\eta'), & \quad  (y,\eta_{1},\eta') \in Y_{3},
\end{align*}
where 
\begin{align*}
Y_{1}&:=\overline{\R_+}\times(-\infty,-\sqrt{2|\phi_b|})\times\R^2,
\\
Y_{2}&:=\{(y,\eta_1,\eta')\in\overline{\R_+}\times\R^3\;|\;
2|{\phi}(y)|<\eta_1^2<2|\phi_b|\}, \quad
\\
Y_{3}&:=\overline{\R_+}\times(\sqrt{2|\phi_b|},\infty)\times\R^2. 
\end{align*}
The second and third equalities mean that \eqref{need4} must hold.
Furthermore, we conclude from these three equalities that $f$ must be written by \eqref{fform5}, i.e, 
\begin{align*}
{f}(x,\xi)
=f_\infty(-\sqrt{\xi_1^2-2{\phi}(x)},\xi')\chi(-\xi_1)
+f_\infty(\sqrt{\xi_1^2-2{\phi}(x)},\xi')\chi(\xi_1). 
\end{align*}

Next we show that $\phi$ satisfies \eqref{phieq6} below.
Integrating \eqref{fform5} over $\mathbb R^{3}$ and
using the change of variables $\sqrt{\xi_{1}^{2}-2\phi}=-\zeta_{1}$ and $\sqrt{\xi_{1}^{2}-2\phi}=\zeta_{1}$
for the first and second terms of the right hand side of \eqref{fform5}, respectively,
we see that 
\begin{align}
\int_{\mathbb R^{3}} f(x,\xi) \,d\xi
&=\int_{{\mathbb R}^3}f_\infty(\xi)\frac{|\xi_1|}{\sqrt{\xi_1^2+2\phi(x)}} \chi(\xi_1^2+2\phi(x))\,d\xi 
=\rho_{i}^{-}(\phi(x)),
\label{rho-'}
\end{align}
where $\rho_{i}^{-}$ is the same function defined in \eqref{rho--}.
Substituting \eqref{rho-'} into \er{seq2} yields an ordinary differential equation for $\phi$:
\begin{equation}\lb{phieq6+} 
\partial_{xx} \phi = \rho_{i}^{-}(\phi) - n_{e}(\phi), \quad x>0.
\end{equation}
Multiply \er{phieq6+} by $\D_{x}\phi$, integrate it over $(x,\infty)$, and use \eqref{sbc4} and $\lim_{x \to \infty}\D_{x}\phi(x)=0$ to obtain
\begin{equation}\lb{phieq6}
(\D_{x} \phi)^2=2V^{-}(\phi), \quad V^{-}(\phi)=\int_{0}^{\phi} \left(\rho_{i}^{-}(\varphi) - n_{e}(\varphi) \right)  d\vphi,
\end{equation}
where the function $V^{-}$ is the same function defined in \eqref{V-}.
Thus $\phi$ must satisfy \eqref{phieq6}.

Now we show $\rho_{i}^{-} \in C([\phi_{b},0])$ which immediately gives $V^{-} \in C^{1}([\phi_{b},0])$.
Owing to $ f \in C(\overline{\R_{+}};L^{1}(\R^{3}))$ and \eqref{weak2} in Definition \ref{DefS1}, there hold that
\begin{gather*}
\rho_{i}^{-}(\phi(x))=\| f(x) \|_{L^{1}} \in C(\overline{\R_{+}}), \quad
\lim_{x\to\infty}\| f(x) \|_{L^{1}} = \| f_{\infty} \|_{L^{1}}=1.
\end{gather*}
These imply $\rho_{i}^{-} \in C([\phi_{b},0])$ with the aid of \eqref{sbc4},
$ \phi \in C(\overline{\R_{+}})$, and $\D_{x} \phi >0$ in Definition \ref{DefS1}.

What is left is to obtain the generalized Bohm criterion \eqref{Bohm6}.
We see from $n_{e}(0)=1$ and \eqref{netrual1} that
\begin{equation}\lb{V6}
V^{-}(0)=\frac{d V^{-}}{d \phi}(0)=0.
\end{equation}
If $V^{-} \in C^{2}([\phi_{b},0])$, we arrive at \eqref{Bohm6} 
from \eqref{phieq6} and \eqref{V6} with the aid of the Taylor theorem. 
The proof is complete.
\end{proof}

Assertion (iii) in Theorem \ref{existence3} can be shown 
by the same method as in the proof of Assertion (iii) in Theorem \ref{existence1} with the aid of \eqref{phieq6}. 
We omit the proof.
Let us show Assertions (i) and (ii) in Theorem \ref{existence3}.

\begin{proof}[Proof of Assertions (i) and (ii) in Theorem \ref{existence3}]
In the same way as in the proof of Assertions (i) and (ii) in Theorem \ref{existence2}, 
we first see that $B^{-}\neq \emptyset$ if $d^{2} V^{-}/d\phi^{2}(0)>0$;
$\phi_{b} >\inf B^{-}$ is necessary;
 $\sqrt{V^{-}}$ is Lipschitz continuous on $[\phi_{b},0]$.
%
%
To find a solution $\phi$ with $\D_{x} \phi>0$ of the equation \eqref{phieq6} with \eqref{sbc3} and \eqref{sbc4},
we rewrite \eqref{phieq6} into the equivalent equation
\begin{gather*}
\D_{x}{\phi}=\sqrt{2V^{-}({\phi})}.
\end{gather*}
Then we complete the proof in the same manner as that of Assertions (i) and (ii) of Theorems \ref{existence1} and \ref{existence2}.
\end{proof}

Similarly as the proof of Corollary \ref{4.1},
we also see that  $\inf B^{-}$ in Assertion (i) of Theorem \ref{existence3} is bounded from above 
by some negative constant $-\delta_{0}$ independent of $\ve$
for the problem \eqref{sVP1} with $(f_{b},f_{\infty})=(g_{b,\ve},g_{\infty,\ve})$.
Here $g_{b,\ve}$ and $g_{\infty,\ve}$ defined in \eqref{fb2} and \eqref{fve2} satisfy
the conditions \eqref{netrual1}, \eqref{need4}, and $d^{2} V^{-}/d\phi^{2}(0)>0$ being
in Theorem \ref{existence3} for $|\phi_{b}| \ll 1$ and $\ve\ll 1$ provided that \eqref{velocity1} holds.
The result is summarized in the following corollary.

\begin{cor}\label{4.3}
Let \eqref{velocity1} hold.
There exist positive constants $\ve_{0}$ and $\delta_{0}$ such that 
if $-\delta_{0}<\phi_{b}<0$, then the problem \eqref{sVP1} with $(f_{b},f_{\infty})=(g_{b,\ve},g_{\infty,\ve})$
has a unique solution $(f_{\ve},\phi_{\ve}) \in C^{1}(\overline{\mathbb R_{+}}\times \mathbb R^{3}) \times C^{2}(\overline{\mathbb R_{+}})$ for any $\ve \in (0,\ve_{0})$.
\end{cor}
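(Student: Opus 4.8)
The plan is to follow verbatim the strategy of Corollary \ref{4.1}, which itself reruns the construction underlying Assertions (i) and (ii) of Theorem \ref{existence3}. First I would check that the pair $(g_{b,\ve},g_{\infty,\ve})$ from \eqref{fb2}--\eqref{fve2} satisfies the hypotheses of Theorem \ref{existence3} uniformly in $\ve$. The conditions \eqref{netrual1} and \eqref{need4}, the smoothness $g_{b,\ve},g_{\infty,\ve}\in C_{0}^{\infty}(\R^{3})$, the sign $d^{2}V^{-}/d\phi^{2}(0)>0$, and the $\phi_{b}$-independence of $\inf B^{-}$ have already been recorded just before the statement of the corollary, on the strength of \eqref{velocity1}. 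What must be added is $V^{-}\in C^{2}([\phi_{b},0])$: setting $2\kappa:=\min\{v_{b},v_{\infty}\}>0$ and choosing $\ve_{0}\le\kappa$ together with $\delta_{0}<\kappa^{2}/2$ (the final value of $\delta_{0}$ being fixed below), the three bumps making up $g_{\infty,\ve}$ are, for $\ve\in(0,\ve_{0})$, supported in the fixed slab $\{\,\xi\in\R^{3}\;|\;|\xi_{1}|\ge\kappa\,\}$; hence for $\phi\in[-\delta_{0},0]$ the cut-off $\chi(\xi_{1}^{2}+2\phi)$ in \eqref{rho--} is identically $1$ on $\supp g_{\infty,\ve}$, the map $\phi\mapsto\rho_{i}^{-}(\phi)$ is smooth near $0$ by differentiation under the integral sign, and $V^{-}\in C^{2}([\phi_{b},0])$ whenever $|\phi_{b}|<\delta_{0}$.

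As in Corollary \ref{4.1}, everything then reduces to producing a constant $\delta_{0}>0$, independent of $\ve$, such that $V_{\ve}^{-}(\phi)>0$ for all $\phi\in[-\delta_{0},0)$, where $V_{\ve}^{-}$ is the functional \eqref{V-} with $f_{\infty}$ replaced by $g_{\infty,\ve}$. Granting this, $-\delta_{0}\in B^{-}$, so $\inf B^{-}\le-\delta_{0}$, and every $\phi_{b}\in(-\delta_{0},0)$ satisfies $\phi_{b}>\inf B^{-}$; Assertion (i) of Theorem \ref{existence3} then yields a unique solution $(f_{\ve},\phi_{\ve})$, and—since $g_{b,\ve},g_{\infty,\ve}$ are smooth and vanish near $\xi_{1}=0$—the regularity argument from the proof of Theorem \ref{existence1} upgrades it to $(f_{\ve},\phi_{\ve})\in C^{1}(\overline{\R_{+}}\times\R^{3})\times C^{2}(\overline{\R_{+}})$, which is the conclusion of the corollary.

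To obtain the positivity of $V_{\ve}^{-}$ I would Taylor-expand at $\phi=0$. From $n_{e}(0)=1$ and \eqref{netrual1} one has $V_{\ve}^{-}(0)=(V_{\ve}^{-})'(0)=0$, and differentiating \eqref{rho--} once more—legitimate because the cut-off is $1$ on the support near $\phi=0$—together with $n_{e}'(0)=-1$ and $\int_{\R^{3}}g_{\infty,\ve}\,d\xi=1$ gives
\begin{gather*}
\frac{d^{2}V_{\ve}^{-}}{d\phi^{2}}(0)=1-\int_{\R^{3}}\xi_{1}^{-2}g_{\infty,\ve}(\xi)\,d\xi ,
\end{gather*}
the quantity already recorded before the corollary. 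Writing $g_{\infty,\ve}$ as a sum of scaled bumps centred at points with $\xi_{1}$-coordinate $c_{1}\in\{\pm v_{\infty},v_{b}\}$, this integral is a weighted sum of terms $\int_{\R^{3}}(c_{1}+\ve\eta_{1})^{-2}\vphi(\eta)\,d\eta$, hence tends to $m_{b}v_{b}^{-2}+(1+\alpha)m_{\infty}v_{\infty}^{-2}<1$ as $\ve\to0$ by \eqref{velocity1}; so, after possibly shrinking $\ve_{0}$, there is $c_{0}>0$ independent of $\ve$ with $d^{2}V_{\ve}^{-}/d\phi^{2}(0)\ge 2c_{0}$ for all $\ve\in(0,\ve_{0})$. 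Finally, on $[-\delta_{0},0]$ and for $\ve\in(0,\ve_{0})$ the integrand $|\xi_{1}|/\sqrt{\xi_{1}^{2}+2\phi}$ and its first two $\phi$-derivatives are bounded on the fixed slab $\{|\xi_{1}|\ge\kappa\}$, and $n_{e}\in C^{2}$, so $|d^{3}V_{\ve}^{-}/d\phi^{3}(\phi)|\le C_{0}$ with $C_{0}$ independent of $\ve$. The Taylor theorem then gives $V_{\ve}^{-}(\phi)\ge\bigl(c_{0}-\tfrac{1}{6}C_{0}|\phi|\bigr)\phi^{2}$, which is positive for $0<|\phi|\le\delta_{0}$ once $\delta_{0}$, depending only on $c_{0}$, $C_{0}$ and $\kappa$, is taken small enough; this is the desired positivity.

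I do not anticipate a genuine obstacle: the whole content is the bookkeeping—already carried out in Corollary \ref{4.1}—that $\ve_{0}$, $c_{0}$, $C_{0}$ and $\delta_{0}$ can be chosen independently of $\ve$. This is exactly what the normalisation \eqref{velocity1} and the scaling $\ve^{-3}\vphi(\,\cdot/\ve)$ make possible, since the total masses $\int_{\R^{3}}g_{\infty,\ve}\,d\xi$ are fixed and the supports stay in fixed neighbourhoods of the points $(\pm v_{\infty},0,0)$ and $(v_{b},0,0)$, all of which have $\xi_{1}$-coordinate bounded away from zero. The one point requiring a little care, as in the attractive case, is the verification that $\rho_{i}^{-}$ (hence $V^{-}$) is $C^{2}$ near $\phi=0$, which is settled by the support/cut-off observation above.
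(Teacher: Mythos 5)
Your proposal is correct and follows essentially the same route as the paper, which simply repeats the argument of Corollary \ref{4.1}: verify the hypotheses of Theorem \ref{existence3} uniformly in $\ve$ (using that the supports of the bumps stay in a fixed slab $\{|\xi_{1}|\ge\kappa\}$ so the cut-off in \eqref{rho--} is inactive and $V^{-}\in C^{2}$), and then obtain an $\ve$-independent lower bound $-\delta_{0}$ on $\inf B^{-}$ via the Taylor expansion with a uniform positive lower bound on $d^{2}V_{\ve}^{-}/d\phi^{2}(0)$ and a uniform bound on the third derivative. No gaps.
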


\subsection{The Delta Mass Limit}\label{S4.3}
This section provides the proof of Theorem \ref{DiffThm2} on the delta mass limit.
We start from showing the next lemma on the solvability of the problem \eqref{Hphi0}.

\begin{lem}
Let \eqref{velocity1} hold.
There exists a positive constant $\delta_{0}$ such that 
if $|\phi_{b}|<\delta_{0}$, then the problem \eqref{Hphi0} has a unique monotone solution $\phi \in C^{2}(\overline{\mathbb R_{+}})$.
\end{lem}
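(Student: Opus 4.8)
The plan is to reduce the second-order boundary value problem \eqref{Hphi0} to a first-order autonomous ODE for $\phi$, exactly as in the proofs of Assertions (i)--(ii) of Theorem \ref{existence1} and of Proposition \ref{EPsol}. Set
\[
\rho_{H}(\phi):=\frac{m_{b}v_{b}}{\sqrt{v_{b}^{2}+2\phi}}+\frac{(1+\alpha)m_{\infty}v_{\infty}}{\sqrt{v_{\infty}^{2}+2\phi}},
\qquad
V_{H}(\phi):=\int_{0}^{\phi}\left(\rho_{H}(\varphi)-n_{e}(\varphi)\right)d\varphi,
\]
which are well defined and $C^{2}$ for $\phi$ in a neighborhood of $0$ because $v_{b},v_{\infty}>0$ and $n_{e}\in C^{2}$. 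Any solution $\phi$ of \eqref{Hphi0} satisfies $\lim_{x\to\infty}\D_{x}\phi(x)=0$ by the same argument as in Lemma \ref{lem31}; multiplying \eqref{Hphi1} by $\D_{x}\phi$, integrating over $(x,\infty)$ and using \eqref{Hphi2}, one then obtains the energy identity $(\D_{x}\phi)^{2}=2V_{H}(\phi)$, so that (with the sign dictated by monotonicity) $\phi$ solves $\D_{x}\phi=\mp\sqrt{2V_{H}(\phi)}$. Conversely, a monotone solution of this first-order ODE with $\phi(0)=\phi_{b}$ and $\phi(x)\to0$ solves \eqref{Hphi1} after differentiation, since $\D_{x}\phi\neq0$ lets one cancel the common factor $\D_{x}\phi$. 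Thus it suffices to solve the first-order problem.

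The second step is the sign analysis of $V_{H}$ near $\phi=0$. The first equality in \eqref{velocity1} gives $\rho_{H}(0)=m_{b}+(1+\alpha)m_{\infty}=1=n_{e}(0)$, hence $V_{H}(0)=0$ and $\frac{dV_{H}}{d\phi}(0)=\rho_{H}(0)-n_{e}(0)=0$; differentiating once more and using $n_{e}'(0)=-1$,
\[
\frac{d^{2}V_{H}}{d\phi^{2}}(0)=\rho_{H}'(0)-n_{e}'(0)=1-m_{b}v_{b}^{-2}-(1+\alpha)m_{\infty}v_{\infty}^{-2}>0
\]
by the strict inequality in \eqref{velocity1}. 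Taylor's theorem then yields a $\delta_{0}>0$, which we also take small enough that $\rho_{H}$ is $C^{2}$ on $[-\delta_{0},\delta_{0}]$, such that $V_{H}(\phi)>0$ for $0<|\phi|\leq\delta_{0}$ and $\sqrt{2V_{H}}$ is Lipschitz continuous on $[-\delta_{0},\delta_{0}]$ (Lipschitz near $\phi=0$ because $V_{H}$ vanishes there exactly to second order with positive leading coefficient, and smooth away from $0$ where $V_{H}>0$).

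Finally, fix $\phi_{b}$ with $|\phi_{b}|<\delta_{0}$. Applying a standard existence--uniqueness theorem for ODEs to $\D_{x}\phi=-\sqrt{2V_{H}(\phi)}$ if $\phi_{b}>0$, and to $\D_{x}\phi=+\sqrt{2V_{H}(\phi)}$ if $\phi_{b}<0$, with $\phi(0)=\phi_{b}$, produces a unique solution; by a barrier argument it is monotone and stays in $(0,\phi_{b}]$ (resp. $[\phi_{b},0)$) for all $x>0$, and being monotone and bounded it converges to some $\ell$ with $V_{H}(\ell)=0$, which forces $\ell=0$. The near-$0$ linearization $\D_{x}\phi\approx\mp\sqrt{V_{H}''(0)}\,\phi$ gives exponential decay $|\D_{x}^{l}\phi(x)|\leq Ce^{-cx}$ for $l=0,1,2$; in particular $\D_{x}\phi<0$ (resp. $>0$) everywhere, $\phi$ solves \eqref{Hphi1} and, the right-hand side of \eqref{Hphi1} being $C^{2}$ in $\phi$, $\phi\in C^{2}(\overline{\mathbb R_{+}})$. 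Uniqueness of the monotone solution of \eqref{Hphi0} follows from the energy identity together with uniqueness for the first-order ODE. The only slightly delicate point is the Lipschitz estimate for $\sqrt{V_{H}}$ at the degenerate zero $\phi=0$; it is precisely the strict Bohm-type inequality in \eqref{velocity1} that provides $V_{H}''(0)>0$ and rescues it, so no smallness assumption beyond $|\phi_{b}|<\delta_{0}$ is required.
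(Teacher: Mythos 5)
Your proposal is correct and follows essentially the same route as the paper: reduce \eqref{Hphi0} to the energy identity $(\D_{x}\phi)^{2}=2\tilde{V}(\phi)$, verify $\tilde{V}(0)=\tilde{V}'(0)=0$ and $\tilde{V}''(0)>0$ from \eqref{velocity1} and $n_{e}'(0)=-1$, and then solve the resulting first-order ODE as in the proofs of Assertions (i)--(ii) of Theorems \ref{existence1}--\ref{existence3}. The paper's own proof simply cites those earlier arguments where you spell them out, so no further comment is needed.
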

\begin{proof}
We first note that $\lim_{x \to \infty}\D_{x}\phi(x)=0$ must hold if such a solution $\phi$ exists.
Now multiply \eqref{Hphi1} by $\D_{x}\phi$ and integrate it over $(x,\infty)$ to obtain
\begin{gather*}
(\D_{x} \phi)^2=2\tilde{V}(\phi), \quad \tilde{V}(\phi):=\int_{0}^{\phi} \frac{m_{b}v_{b}}{\sqrt{v_{b}^{2}+2\vphi}}+
\frac{(1+\alpha)m_{\infty}v_{\infty}}{\sqrt{v_{\infty}^{2}+2\vphi}} - n_{e}(\varphi) \,  d\vphi.
\end{gather*}
It is straightforward to see that $\tilde{V} \in C^{2}([-\delta_{0},\delta_{0}])$ holds for some $\delta_{0}>0$.
From $n_{e}(0)=1$, $n_{e}'(0)=-1$, and \eqref{velocity1}, one can know that
\begin{gather*}
\tilde{V}(0)=\frac{d \tilde{V}}{d \phi}(0)=0, \quad \frac{d^{2} \tilde{V}}{d \phi^{2}}(0) >0.
\end{gather*}
Then following the proofs of Assertions (i) and (ii) of Theorems \ref{existence1}, \ref{existence2}, and \ref{existence3}, we can complete the proof.
\end{proof}

We are now in a position to prove Theorem \ref{DiffThm2}.

\begin{proof}[Proof of Theorem \ref{DiffThm2}]
Let $(f_{\ve},\phi_{\ve})$ be solutions in Corollaries \ref{4.2} and \ref{4.3}, 
and also $(\rho_{\ve},u_{\ve})$ be the moments of $f_{\ve}$.
We first show the estimate of $\phi_{\ve}-\phi$ in \eqref{conver5}. 
From \eqref{Hphi1}, \eqref{phieq5+}, and \eqref{phieq6+}, we observe that 
\begin{align*}
\partial_{xx} (\phi_{\ve}-\phi)
= W(\phi_{\ve}) - W(\phi) + R_{\ve}(x), \quad x>0,
\end{align*}
where we have used the fact that the second term vanishes in \eqref{rho+} for $f_{b}=g_{b,\ve}$ and $0<\phi_{b}\ll 1$,
and $W(\phi)$ and $R_{\ve}(x)$ are defined as
\begin{align*}
W(\phi)&:= \frac{m_{b}v_{b}}{\sqrt{v_{b}^{2}+2\phi}}+
\frac{(1+\alpha)m_{\infty}v_{\infty}}{\sqrt{v_{\infty}^{2}+2\phi}}
-n_{e}(\phi),
\\
R_{\ve}(x)&:=\int_{\mathbb R^{3}} g_{\infty,\ve}(\xi)\frac{|\xi_{1}|}{\sqrt{\xi_{1}^{2}+2\phi_{\ve}(x)}}  d\xi 
- \frac{m_{b}v_{b}}{\sqrt{v_{b}^{2}+2\phi_{\ve}(x)}}
-\frac{(1+\alpha)m_{\infty}v_{\infty}}{\sqrt{v_{\infty}^{2}+2\phi_{\ve}(x)}}.
\end{align*}
Set $M_{\ve}:=\sup_{x \in {\mathbb R}_{+}} \left| R_{\ve}(x)\right|$.
Owing to $n_{e}'(0)=-1$ and \eqref{velocity1}, we have ${d W }/{d\phi } (0)>0$.
Then it is seen by taking $\delta_{0}>0$ and $c_{0}>0$ small enough that 
\begin{gather*}
c_{0} \leq  \frac{d W }{d\phi } (\phi) \leq c_{0}^{-1}, \quad \phi \in [-\delta_{0},\delta_{0}].
\end{gather*}
Now following the proof of \eqref{DiffEq3}, we arrive at 
\begin{gather}\label{DiffEq13}
\sup_{x \in {\mathbb R}_{+}}|(\phi_{\ve}-\phi)(x)| \leq  M_{\ve}/c_{0}.
\end{gather}

For sufficiently small $\ve>0$, let us estimate $M_{\ve}$ by $C_{0} \ve$ as 
\begin{align*}
M_{\ve}
&= \sup_{x \in {\mathbb R}_{+}} \left|\int_{\mathbb R^{3}} g_{\infty,\ve}(\xi) \left(
\frac{|\xi_{1}|}{\sqrt{\xi_{1}^{2}+2\phi_{\ve}(x)}}
- \frac{m_{b}v_{b}}{\sqrt{v_{b}^{2}+2\phi_{\ve}(x)}}
-\frac{(1+\alpha)m_{\infty}v_{\infty}}{\sqrt{v_{\infty}^{2}+2\phi_{\ve}(x)}}  \right)d\xi \right| 
\notag \\
& \leq \sup_{x \in {\mathbb R}_{+}} \left(\sup_{|\xi-(-v_{\infty},0,0)| \leq \ve} 
\left|\frac{-\xi_{1}}{\sqrt{\xi_{1}^{2}+2\phi_{\ve}(x)}}-\frac{v_{\infty}}{\sqrt{v_{\infty}^{2}+2\phi_{\ve}(x)}} \right| \right)
m_{\infty}
\notag \\
&\quad +\sup_{x \in {\mathbb R}_{+}} \left(\sup_{|\xi-(v_{b},0,0)| \leq \ve} 
\left|\frac{\xi_{1}}{\sqrt{\xi_{1}^{2}+2\phi_{\ve}(x)}}-\frac{v_{b}}{\sqrt{v_{b}^{2}+2\phi_{\ve}(x)}} \right| \right)
m_{b}
\notag \\
&\quad +\sup_{x \in {\mathbb R}_{+}} \left(\sup_{|\xi-(v_{\infty},0,0)| \leq \ve} 
\left|\frac{\xi_{1}}{\sqrt{\xi_{1}^{2}+2\phi_{\ve}(x)}}-\frac{v_{\infty}}{\sqrt{v_{\infty}^{2}+2\phi_{\ve}(x)}} \right| \right)
\alpha m_{\infty}
\notag \\
& \leq C_{0} \ve,
\end{align*}
where we have used the fact 
$\int_{\mathbb R^{3}} g_{\infty,\ve}(\xi)d\xi=m_{b}+(1+\alpha)m_{\infty}=1$ in deriving the equality,
and $C_{0}$ is a positive constant independent of $\ve$.
This together with \eqref{DiffEq13} leads to the estimate of $\phi_{\ve}-\phi$ in \eqref{conver5}.

We complete the proof by showing the other estimates in \eqref{conver5}.
It is clear that $\rho_{\ve}(x)=\rho_{i}^{\pm}(\phi_{\ve}(x))$ holds owing to \eqref{rho+'} and \eqref{rho-'}.
We estimate $\rho_{\ve}-\rho$ as
\begin{align*}
&\sup_{x \in {\mathbb R}_{+}} \left| \rho_{\ve}(x)-\rho(x) \right|
\notag \\
&= \sup_{x \in {\mathbb R}_{+}} \left|\int_{\mathbb R^{3}} g_{\infty,\ve}(\xi) \left(
\frac{|\xi_{1}|}{\sqrt{\xi_{1}^{2}+2\phi_{\ve}(x)}}
- \frac{m_{b}v_{b}}{\sqrt{v_{b}^{2}+2\phi(x)}}
-\frac{(1+\alpha)m_{\infty}v_{\infty}}{\sqrt{v_{\infty}^{2}+2\phi(x)}}  \right)d\xi \right| 
\notag \\
& \leq \sup_{x \in {\mathbb R}_{+}} \left(\sup_{|\xi-(-v_{\infty},0,0)| \leq \ve} 
\left|\frac{-\xi_{1}}{\sqrt{\xi_{1}^{2}+2\phi_{\ve}(x)}}-\frac{v_{\infty}}{\sqrt{v_{\infty}^{2}+2\phi(x)}} \right| \right)
m_{\infty}
\notag \\
&\quad +\sup_{x \in {\mathbb R}_{+}} \left(\sup_{|\xi-(v_{b},0,0)| \leq \ve} 
\left|\frac{\xi_{1}}{\sqrt{\xi_{1}^{2}+2\phi_{\ve}(x)}}-\frac{v_{b}}{\sqrt{v_{b}^{2}+2\phi(x)}} \right| \right)
m_{b}
\notag \\
&\quad +\sup_{x \in {\mathbb R}_{+}} \left(\sup_{|\xi-(v_{\infty},0,0)| \leq \ve} 
\left|\frac{\xi_{1}}{\sqrt{\xi_{1}^{2}+2\phi_{\ve}(x)}}-\frac{v_{\infty}}{\sqrt{v_{\infty}^{2}+2\phi(x)}} \right| \right)
\alpha m_{\infty}
\notag \\
& \leq C_{0} (\ve+\sup_{x \in {\mathbb R}_{+}}|\phi(x)-\phi_{\ve}(x)|) \leq C_{0} \ve.
\end{align*}
Let us treat $\rho_{\ve}u_{\ve}-\rho u$. 
By the change of variables $\sqrt{\xi_{1}^{2}-2\phi(x)}=\pm\zeta_{1}$, 
the term $\rho_{\ve}u_{\ve}$ is rewritten as
\begin{gather*}
(\rho_{\ve}u_{\ve})(x)=\int_{\mathbb R^{3}} \xi_{1} g_{\infty,\ve}(\xi)  d\xi.
\end{gather*}
It is easy to see $\rho u=m_{b}v_{b}+(\alpha-1)m_{\infty}v_{\infty}$ from \eqref{Hrho}.
Then we arrive at
\begin{align*}
\sup_{x \in {\mathbb R}_{+}} \left| (\ro_{\ve}u_{\ve}) (x) - (\rho u)(x) \right|
&=\sup_{x \in {\mathbb R}_{+}} \left| \int_{\mathbb R^{3}} g_{\infty,\ve}(\xi) 
(\xi_{1}-m_{b}v_{b}-(\alpha-1)m_{\infty}v_{\infty}) d\xi\right| \leq  \ve.
\end{align*}
Consequently,  \eqref{conver5} holds.
\end{proof}

\medskip

\noindent
{\bf Acknowledge.} 
This work was supported by JSPS KAKENHI Grant Numbers 18K03364 and 21K03308.

\begin{appendix}

\section{Reduction}\label{A0}
In this section, we reduce the boundary value problem of 
\eqref{seq1}--\eqref{seq2} with conditions \eqref{sbc1},  \eqref{sbc2}, \eqref{sbc4}, and \eqref{sbc5} 
to the boundary value problem \eqref{sVP1}.
For simplicity, we treat the reduction only for the completely absorbing boundary, i.e. $f_{b}=\alpha=0$.

Suppose that the former boundary value problem has a solution $(f,\phi)$ with $\partial_{x} \phi(x)<0$.
It is sufficient to show that a value $\phi(0)$ is determined a priori by $f_{\infty}$.
Indeed we can construct the soluiton of the former problem by solving 
the problem \eqref{sVP1} with $\phi_{b}=\phi(0)$.
Let us find an a priori value $\phi(0)$.
Following the proof of Lemma \ref{lem31},
we see that $f$ must be written by the form \eqref{fform} even for the former problem.
Substituting \eqref{fform} into \eqref{sbc5} with $\alpha=0$ yields the \footnote{If there is no value $\phi(0)$ so that this condition holds, the boundary value problem of 
\eqref{seq1}--\eqref{seq2} with conditions \eqref{sbc1},  \eqref{sbc2}, \eqref{sbc4}, and \eqref{sbc5} admits no solution.}following condition:
\begin{gather*}
n_{e}(\phi(0)) v_{e}=\int_{\mathbb R^{3}} \xi_{1} f_{\infty}(-\sqrt{\xi_{1}^{2}-2\phi(0)},\xi')\chi(\xi_{1}^{2}-2\phi(0))\chi(-\xi_{1}) d\xi.
\end{gather*}
By solving this with respect to $\phi(0)$, we can know the a priori value $\phi(0)$. 
Consequently, the former problem can be reduced to 
the problem \eqref{sVP1} with $\phi_{b}=\phi(0)$.

\section{Estimates of $\rho^{\pm}$}\label{C}
This section provides the proofs of estimates \eqref{rho+} and \eqref{rho-}.
First we can obtain \eqref{rho+}  by using the H\"older inequality as follows: 
\begin{align*}
|\rho_{i}^{+}(\phi)| 
& \leq \|f_{\infty}\|_{L^{1}(\mathbb R^{3})} +C  \int_{\sqrt{2(\phi_b-\phi)\chi(\phi_b-\phi)}}^{\sqrt{2\phi_b}}  \frac{\xi_1}{\sqrt{\xi_1^2+2\phi-2\phi_b}} \left( \int_{\mathbb R^2} f_b(\xi)\, \,d\xi'  \right) d\xi_1
\\
&\leq \|f_{\infty}\|_{L^{1}(\mathbb R^{3})} +C \left( \int_{\sqrt{2(\phi_b-\phi)\chi(\phi_b-\phi)}}^{\sqrt{2\phi_b}}  \frac{|\xi_1|^{r'}}{({\xi_1^2+2\phi-2\phi_b})^{r'/2}} d\xi_1 \right)^{\!\!1/r'} \!\!\|f_{b}\|_{L^{r}(0,\sqrt{2\phi_{b}};L^{1}(\mathbb R^{2}))}
\\
& \leq \|f_{\infty}\|_{L^{1}(\mathbb R^{3})} + C\|f_{b}\|_{L^{r}(0,\sqrt{2\phi_{b}};L^{1}(\mathbb R^{2}))}
\end{align*}
for $\phi \geq 0$, where $r'<2$ is the H\"older conjugate of $r>2$. 
Similarly,  we observe that for $\phi \in [-M,0]$,
\begin{align*}
|\rho_{i}^{-}(\phi)| 
& = \int_{{\mathbb R}^3}f_\infty(\xi)\frac{|\xi_1|}{\sqrt{\xi_1^2+2\phi}} \chi(\xi_1^2-4M)\,d\xi 
\\
&\quad +\int_{{\mathbb R}^3}f_\infty(\xi)\frac{|\xi_1|}{\sqrt{\xi_1^2+2\phi}} \{\chi(\xi_1^2+2\phi)-\chi(\xi_1^2-4M)\}\,d\xi 
\\
& \leq  \sqrt{2}\|f_{\infty}\|_{L^{1}(\mathbb R^{3})} 
+ \int_{-2\sqrt{M}}^{2\sqrt{M}} \frac{|\xi_1|}{\sqrt{\xi_1^2+2\phi}} \chi(\xi_1^2+2\phi) \ \left( \int_{{\mathbb R}^2}f_\infty(\xi) d\xi' \right)d\xi_{1}
\\
& \leq \sqrt{2}\|f_{\infty}\|_{L^{1}(\mathbb R^{3})} +C_{M}\|f_{\infty}\|_{L^{r}(-2\sqrt{M},2\sqrt{M};L^{1}(\mathbb R^{2}))}.
\end{align*}
Thus \eqref{rho-} holds. The proofs are complete.

\section{Properties of $V^{\pm}$}\label{B0}

We investigate properties of the functions $V^{\pm}$ defined in \eqref{V+} and \eqref{V-}.

\begin{lem}{\rm (Attractive boundary)} \
Let $\phi_b>0$, $\alpha \neq 1$, and $f_b\in L^1(\R_+^3)$ satisfy $f_{b} \geq 0$ and
\begin{gather}\label{fb0}
f_b(\xi)=0, \quad 0<\xi_1<c_0
\end{gather}
for some  $c_0>0$.
Suppose that $f_{\infty} \in L^{1}(\mathbb R^{3})$ satisfies $f_{\infty} \geq 0$, \eqref{netrual1}, \eqref{need3}, and \eqref{Bohm2}.
Then there exists a positive constant $\delta$ such that if $0<\phi_{b}<\delta$,
the function $V^{+}$ belongs to $C^{2}([0,\phi_{b}])$ and satisfies ${d^{2} V^{+}}/{d \phi^{2}}(0)>0$.

\smallskip

\noindent
{\rm (Repulsive boundary)} \ Let $\phi_b<0$,  $f_b\in L^1(\R_+^3)$, and $f_{b} \geq 0$.
Suppose that $f_{\infty} \in L^{1}(\mathbb R^{3})$ satisfies $f_{\infty} \geq 0$, \eqref{netrual1}, \eqref{need4}, \eqref{Bohm2}, and
\begin{gather*}
f_\infty(\xi)=0, \quad - c_0<\xi_1<c_0
\label{fi}
\end{gather*}
for some $c_0>0$.
Then there exists a constant $\delta$ such that if $-\delta<\phi_{b}<0$, 
the function $V^{-}$ belongs to $C^{2}([\phi_{b},0])$ and satisfies ${d^{2} V^{-}}/{d \phi^{2}}(0)>0$.
\end{lem}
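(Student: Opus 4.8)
\emph{Approach.} The plan is to read off the explicit formulas \eqref{rho++} and \eqref{rho--} for $\rho_i^{\pm}$ and to use the support hypotheses on $f_b$ (resp.\ $f_\infty$) to show that, once $|\phi_b|$ is small, the only term that could spoil twice differentiability of $V^{\pm}$ at $\phi=0$ actually drops out. The conclusion then follows from differentiation under the integral sign together with the elementary identities $V^{\pm}(0)=dV^{\pm}/d\phi(0)=0$ --- which hold because $\rho_i^{\pm}(0)=\int_{\mathbb R^3}f_\infty\,d\xi=1=n_e(0)$ by \eqref{netrual1} (cf.\ \eqref{V5}, \eqref{V6}) --- exactly as in the proofs of Lemmas \ref{lem41} and \ref{lem42}.

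\emph{Attractive boundary.} I would take $\delta:=c_0^2/2$. For $0<\phi_b<\delta$ and $\phi\in[0,\phi_b]$ the inner integration variable $\xi_1$ in the second term of \eqref{rho++} stays in $[0,\sqrt{2\phi_b}]\subset[0,c_0)$ --- the cut-off $\chi(\phi_b-\phi)$ only moves the lower endpoint, which remains nonnegative --- so by \eqref{fb0} the integrand there vanishes a.e.\ and that whole term is identically zero on $[0,\phi_b]$; hence $\rho_i^+(\phi)=\int_{\mathbb R^3}f_\infty(\xi)|\xi_1|(\xi_1^2+2\phi)^{-1/2}\,d\xi$ on $[0,\phi_b]$. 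For $\phi\ge0$ the $\phi$-derivative of this integrand is $-f_\infty(\xi)|\xi_1|(\xi_1^2+2\phi)^{-3/2}$, whose modulus is bounded by $\xi_1^{-2}f_\infty\in L^1(\mathbb R^3)$ thanks to \eqref{Bohm2}; so differentiation under the integral and dominated convergence give $\rho_i^+\in C^1([0,\phi_b])$, whence $V^+\in C^2([0,\phi_b])$ (recall $n_e\in C^2$), and
\begin{gather*}
\frac{d^2V^+}{d\phi^2}(0)=\frac{d\rho_i^+}{d\phi}(0)-n_e'(0)=1-\int_{\mathbb R^3}\xi_1^{-2}f_\infty(\xi)\,d\xi>0,
\end{gather*}
the last inequality being precisely \eqref{Bohm2}.

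\emph{Repulsive boundary.} Here I would fix any $\delta\in(0,c_0^2/2)$ and use \eqref{rho--}. For $-\delta<\phi_b<0$ and $\phi\in[\phi_b,0]$, on the set where $f_\infty\neq0$ one has $|\xi_1|\ge c_0$, hence $\xi_1^2+2\phi\ge c_0^2-2\delta>0$ and $\chi(\xi_1^2+2\phi)=1$ there, so $\rho_i^-(\phi)=\int_{\mathbb R^3}f_\infty(\xi)|\xi_1|(\xi_1^2+2\phi)^{-1/2}\,d\xi$. Since $t\mapsto t(t^2+2\phi)^{-3/2}$ is decreasing on $[c_0,\infty)$ for every $\phi\le0$, the $\phi$-derivative of this integrand is bounded in modulus by $c_0(c_0^2-2\delta)^{-3/2}f_\infty\in L^1(\mathbb R^3)$, uniformly in $\phi\in[\phi_b,0]$; differentiation under the integral then gives $\rho_i^-\in C^1([\phi_b,0])$, $V^-\in C^2([\phi_b,0])$, and $d^2V^-/d\phi^2(0)=1-\int_{\mathbb R^3}\xi_1^{-2}f_\infty(\xi)\,d\xi>0$ as above (note that $\int_{\mathbb R^3}\xi_1^{-2}f_\infty\le c_0^{-2}$ is here automatically finite, so only the strict inequality \eqref{Bohm2} is actually used).

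\emph{Main difficulty.} None of the steps is deep; the points that need care are (i) verifying that \eqref{fb0} really forces the $f_b$-term in $\rho_i^+$ to vanish for \emph{all} $\phi\in[0,\phi_b]$, in particular the endpoint behaviour of the cut-off $\chi(\phi_b-\phi)$, and (ii) in the repulsive case, exhibiting an $L^1$-majorant for the differentiated integrand that is uniform in $\phi\in[\phi_b,0]$, which is what the short monotonicity remark above is for. The hypotheses \eqref{need3}, \eqref{need4} are not needed for this computation; they are listed only so that the lemma feeds directly into Theorems \ref{existence2} and \ref{existence3}.
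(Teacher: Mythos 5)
Your proposal is correct and follows essentially the same route as the paper: the support hypothesis \eqref{fb0} (resp.\ the vacuum strip of $f_\infty$ near $\xi_1=0$) kills the problematic boundary term in $\rho_i^{+}$ (resp.\ trivializes the cut-off in $\rho_i^{-}$) once $|\phi_b|<c_0^2/2$, after which $C^2$-regularity of $V^{\pm}$ and the sign of $d^2V^{\pm}/d\phi^2(0)=1-\int\xi_1^{-2}f_\infty\,d\xi$ follow from \eqref{Bohm2} exactly as in the last paragraph of the proof of Lemma \ref{lem31}. The only (harmless) differences are that you invoke dominated rather than monotone convergence and that you write out the uniform majorant for the repulsive case, which the paper dismisses as ``similar.''
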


\begin{proof}
We show only the assertion for the attractive boundary, since the other one can be shown similarly.
Recalling the definition of $\rho_{i}^{+}$ in \eqref{rho++}, 
we see from \eqref{fb0} that the second term vanishes for sufficiently small $\phi_{b}$.
Therefore, we can complete the proof 
by following the same argument as in the last paragraph of the proof of Lemma \ref{lem31}, 
and also noting that
\begin{gather*}
\frac{d^{2} V^{+}}{d \phi^{2}}(0) 
=-\int_{\mathbb R^{3}}\xi_{1}^{-2}f_{\infty}(\xi) d\xi+1>0,
\end{gather*}
where we have used \eqref{Bohm2} in deriving the last inequality.
\end{proof}

\end{appendix}

\end{document}